\documentclass[a4paper]{amsart}
\usepackage[latin9]{inputenc}
\setlength{\parskip}{\smallskipamount}
\setlength{\parindent}{0pt}
\usepackage{xcolor}
\usepackage{pdfcolmk}
\usepackage{verbatim}
\usepackage{amstext}
\usepackage{amsthm}
\usepackage{amssymb}
\usepackage{stmaryrd}
\PassOptionsToPackage{normalem}{ulem}
\usepackage{ulem}
\usepackage[unicode=true,pdfusetitle,
 bookmarks=true,bookmarksnumbered=false,bookmarksopen=false,
 breaklinks=false,pdfborder={0 0 1},backref=false,colorlinks=false]
 {hyperref}

\makeatletter

\providecolor{lyxadded}{rgb}{0,0,1}
\providecolor{lyxdeleted}{rgb}{1,0,0}

\DeclareRobustCommand{\lyxsout}[1]{\ifx\\#1\else\sout{#1}\fi}

\numberwithin{equation}{section}
\numberwithin{figure}{section}
\theoremstyle{plain}
\newtheorem*{thm*}{\protect\theoremname}
\theoremstyle{plain}
\newtheorem{thm}{\protect\theoremname}
\theoremstyle{remark}
\newtheorem{rem}[thm]{\protect\remarkname}
\theoremstyle{definition}
\newtheorem{defn}[thm]{\protect\definitionname}
\theoremstyle{plain}
\newtheorem{prop}[thm]{\protect\propositionname}
\theoremstyle{plain}
\newtheorem{lem}[thm]{\protect\lemmaname}

\usepackage{amssymb,amsthm,amsmath,amsfonts,amscd}
\usepackage{graphicx}
\usepackage{url}
\usepackage{color}
\usepackage[active]{srcltx}
\usepackage[matrix,arrow]{xy}
\usepackage{mathrsfs}
\usepackage{enumerate}
\usepackage{amsopn} 
\usepackage{bbm} 
\usepackage{cite}
\usepackage{hyperref}
\allowdisplaybreaks[1]
\usepackage[english]{babel}
\usepackage{bbm}
\usepackage{mdef}
\date{\today}




\makeatother

\providecommand{\definitionname}{Definition}
\providecommand{\lemmaname}{Lemma}
\providecommand{\propositionname}{Proposition}
\providecommand{\remarkname}{Remark}
\providecommand{\theoremname}{Theorem}

\begin{document}
\title[Well-posedness by noise for SCL]{Well-posedness by noise for scalar conservation laws}
\begin{abstract}
We consider stochastic scalar conservation laws with spatially inhomogeneous
flux. The regularity of the flux function with respect to its spatial
variable is assumed to be low, so that entropy solutions are not necessarily
unique in the corresponding deterministic scalar conservation law.
We prove that perturbing the system by noise leads to well-posedness.
\end{abstract}

\author{Benjamin Gess}
\address{Max-Planck Institute for Mathematics in the Sciences\\
Inselstrasse 22, 04103 Leipzig\\
Germany}
\email{bgess@mis.mpg.de}
\author{Mario Maurelli}
\address{Department of Mathematics, University of York\\
York, YO10 5DD\\
United Kingdom\\
\&\\
School of Mathematics, University of Edinburgh\\
James Clerk Maxwell Building, Peter Guthrie Tait Road, Edinburgh,
EH9 3FD\\
United Kingdom}
\email{mario.maurelli@york.ac.uk}
\keywords{Stochastic scalar conservation laws, well-posedness by noise}
\subjclass[2000]{H6015, 35R60, 35L65.}
\thanks{M.M.\ acknowledges funding from the European Research Council under
the European Union's Seventh Framework Program (FP7/2007-2013) / ERC
grant agreement nr.\ 258237. }
\maketitle

\section{Introduction}

In this paper we prove a well-posedness by noise result for the inhomogeneous
scalar conservation laws
\begin{align}
 & du(t,x)+b(x,u(t,x))\cdot\nabla u(t,x)\,dt+\nabla u(t,x)\circ dW_{t}=0\quad\text{on }[0,T]\times\R^{d},\label{eq:intro-stoch-burgers-inhomo}\\
 & u(0,\cdot)=u_{0}.\nonumber 
\end{align}
If the vector field $b$ lacks sufficient regularity with respect
to the space variable $x$, ill-posedness can appear. For a counterexample
see \eqref{eq:model_ex} below. In contrast, the main result of this
paper (Theorem \ref{thm:well_posedness} below), here stated in a
slightly simplified version, shows that noise restores well-posedness
for some of these vector fields:
\begin{thm*}
Assume that $b\in L_{u,loc}^{\infty}(L_{x}^{\infty})\cap L_{u,loc}^{1}(W_{x,loc}^{1,1})$
and that $\div\,b\in L_{u,loc}^{1}(L_{x}^{1})\cap L_{x}^{p}(L_{u,loc}^{\infty})$
for some $p>d$, $p\le\infty$. Then, for every initial datum $u_{0}$
in $(L^{1}\cap L^{\infty})(\R^{d})$, there exists a unique entropy
solution $u$ to \eqref{eq:intro-stoch-burgers-inhomo}.
\end{thm*}
The question of regularization and well-posedness by noise for SPDE
has attracted considerable interest in recent years. One of the driving
hopes in this field is to obtain the well-posedness by noise for nonlinear
PDE arising in fluid dynamics, for which the deterministic counterpart
does not or is not known to allow unique solutions. Despite considerable
effort, only partial results in this direction could be obtained so
far, cf.~e.g.~Flandoli \cite{F11}, Flandoli, Romito \cite{FR02,FR08},
Delarue, Flandoli, Vincenzi \cite{DFV14} and the references therein.
In the linear setting (i.e.\ for $b$ independent of $u$), one of
the prominent works in this direction by Flandoli, Gubinelli, Priola
is \cite{FGP10} in which the well-posedness by noise for linear transport
equations with irregular drift has been shown. More precisely, while
weak solutions to
\begin{equation}
\partial_{t}u(t,x)+b(x)\cdot\nabla u(t,x)=0\quad\text{on }[0,T]\times\R^{d}\label{eq:intro-det-transport}
\end{equation}
(omitting the initial condition for simplicity of notation) are not
necessarily unique if $\div\,b\not\in L^{\infty}(\R^{d})$ (cf.~DiPerna,
Lions \cite{DPL89}, Ambrosio \cite{A04-1}), it has been shown in
\cite{FGP10} that weak solutions to
\begin{equation}
du(t,x)+b(x)\cdot\nabla u(t,x)\,dt+\nabla u(t,x)\circ dW_{t}=0\quad\text{on }[0,T]\times\R^{d}\label{eq:intro-stoch-transport}
\end{equation}
are unique, provided $b\in C_{b}^{\a}(\R^{d})$ for some $\a\in(0,1)$,
$\div\,b\in L^{p}(\R^{d})$ for some $p>2$ and $W_{t}$ denotes a
standard $d$-dimensional Wiener process. As pointed out in \cite{FGP10}
their result yields \textit{the first concrete example of a partial
differential equation related to fluid dynamics that may lack uniqueness
without noise, but is well-posed with a suitable noise }(cf.~\cite[p.3, l.1 ff.]{FGP10})\textit{.}
On the other hand, as observed in \cite{FGP10,F11}, in the nonlinear
setting ($d=1$ for simplicity)
\begin{equation}
\partial_{t}u(t,x)+\partial_{x}u^{2}(t,x)=0\quad\text{on }[0,T]\times\R\label{eq:intro-det-burgers-homogeneous}
\end{equation}
the same type of noise seems to be of little use, since the stochastically
perturbed equation
\begin{equation}
du(t,x)+\partial_{x}u^{2}(t,x)\,dt+\partial_{x}u(t,x)\circ dW_{t}=0\quad\text{on }[0,T]\times\R\label{eq:intro-stoch-burgers-homogeneous}
\end{equation}
reduces to the deterministic case \eqref{eq:intro-det-burgers-homogeneous}
via the transformation $v(t,x):=u(t,x+W_{t})$. That is, if $u$ is
a solution to \eqref{eq:intro-stoch-burgers-homogeneous} then $v$
is a solution to \eqref{eq:intro-det-burgers-homogeneous} and vice
versa. In particular, shocks and non-uniqueness of weak solutions
still appear in \eqref{eq:intro-stoch-burgers-homogeneous}. Hence,
no well-posedness by noise, nor regularization by noise seems to be
present in this case and it was concluded in \cite{FGP10}: \textit{The
generalization to nonlinear transport equations, where $b$ depends
on $u$ itself, would be a major next step for applications to fluid
dynamics but it turns out to be a difficult problem }(cf.~ \cite[p.6, l.11 ff.]{FGP10})\textit{.}

The purpose of this work is to shed more light on the effect of linear
multiplicative noise on nonlinear scalar conservation laws. In contrast
to the above observation, we show that a similar effect of well-posedness
by noise as obtained in \cite{FGP10} for \eqref{eq:intro-det-transport}
can be observed for (nonlinear) scalar conservation laws. More precisely,
we consider inhomogeneous scalar conservation laws with irregular
flux of the type
\begin{align}
 & \partial_{t}u(t,x)+b(x,u(t,x))\cdot\nabla u(t,x)=0\quad\text{on }[0,T]\times\R^{d},\label{eq:intro-det-burgers-inhomo}\\
 & u(0,\cdot)=u_{0},\nonumber 
\end{align}
with a possibly space-irregular $b$. In particular, this includes
the special case of inhomogeneous Burgers' equations $b(x,u)=2b(x)u$.
The model example 
\begin{equation}
b(x,u)=2\sgn(x)(\sqrt{|x|}\wedge K)u\label{eq:model_ex}
\end{equation}
for some $K>0$, $u_{0}(\cdot)=1_{[0,1]}(\cdot)$, $d=1$ shows that
entropy solutions to \eqref{eq:intro-det-burgers-inhomo} are not
necessarily unique. Indeed, fix some time $T>0$ and choose $K>T/2+1$
for simplicity. Then, there are several entropy solutions to \eqref{eq:intro-det-burgers-inhomo},
including the following two particular ones
\[
u^{1}(t,x):=\begin{cases}
1 & \text{if }0\le x\le\left(\frac{t}{2}+1\right)^{2}\\
0 & \text{otherwise}
\end{cases},\quad u^{2}(t,x):=\begin{cases}
1 & \text{if }-\left(\frac{t}{2}\right)^{2}\le x\le\left(\frac{t}{2}+1\right)^{2}\\
0 & \text{otherwise}
\end{cases},
\]
on $[0,T]\times\R$. In contrast, the main result of this work (Theorem
\ref{thm:well_posedness}) shows that entropy solutions to the stochastically
perturbed scalar conservation law \eqref{eq:intro-stoch-burgers-inhomo}
are unique, under certain assumptions on $b$ and its divergence.%
{} Note that \eqref{eq:model_ex} satisfies these assumptions. Hence,
this demonstrates that linear multiplicative noise has a similar regularizing
effect in the case of nonlinear scalar conservation laws with irregular
flux as it was obtained in the linear setting (i.e.~for linear transport
equation \eqref{eq:intro-stoch-transport}) in \cite{FGP10}. To the
authors' knowledge, this provides the first example of a nonlinear
scalar conservation law that becomes well-posed by the inclusion of
noise.

Let us comment more on the role of the noise in the presence of spatial
inhomogeneities. In this setting, the transformation $v(t,x):=u(t,x+W_{t})$,
applied to \eqref{eq:intro-stoch-burgers-homogeneous} above, gives
\begin{align*}
\partial_{t}v(t,x)+b(x+W_{t},v(t,x))\cdot\nabla v(t,x)=0\quad\text{on }[0,T]\times\R^{d}.
\end{align*}
Hence, the noise induces a random shift of the flux function in its
spatial variable. As for the Burgers' equation \eqref{eq:intro-stoch-burgers-homogeneous},
and contrary to the linear transport equation \eqref{eq:intro-stoch-transport}
this random shift cannot prevent the occurrence of shocks. However,
our main result indicates that, as in the linear setting, this random
shift still has an averaging and thus regularizing effect on spatial
inhomogeneities.

Scalar conservation laws with irregular flux in divergence form have
been used in several models, including models of traffic flow, flow
in porous media and sedimentation processes (cf.~Crasta, De Cicco,
De Philippis \cite{CDCDP15}). In the present work, we choose to consider
the non-divergence form in order to allow comparison to the results
obtained in \cite{FGP10} and by Beck, Flandoli, Gubinelli, Maurelli
in \cite{BFGM14}. We expect that related arguments can be also applied
to the corresponding divergence type equations, as it was demonstrated
in the linear setting in \cite{BFGM14}, although nontrivial differences
with the non-divergence case may arise (see Remark \ref{rem:div_form}).
This will be treated in a subsequent work. The respective study of
conservation laws with irregular flux has attracted considerable interest
in recent years, see Andreianov, Karlsen, Risebro \cite{AKR10,AKR11},
Crasta, De Cicco, De Philippis, Ghiraldin \cite{CDCDPG16,CDCDP15},
Andreianov, Mitrovi\'{c} \cite{AD15} among many more. Due to the
spatial irregularity of the flux, entropy solutions to \eqref{eq:intro-det-burgers-inhomo}
are typically non-unique and several selection criteria to select
a unique entropy solution have been introduced, corresponding to different
physical phenomena and relative approximation procedures. Therefore,
the study of selection methods for \eqref{eq:intro-det-burgers-inhomo}
is of high interest. The well-posedness result for
\begin{align}
 & du(t,x)+b(x,u(t,x))\cdot\nabla u(t,x)\,dt+\s\nabla u(t,x)\circ dW_{t}=0\quad\text{on }[0,T]\times\R^{d},\label{eq:intro-stoch-burgers-3}\\
 & u(0,\cdot)=u_{0},\nonumber 
\end{align}
with $\s>0$ obtained in this paper opens the way to study selection
principles by vanishing noise $\s\to0.$ In the case of linear transport
equations with irregular drift such vanishing noise selection methods
have been analyzed by Attanasio, Flandoli \cite{AF09}, Delarue, Flandoli
\cite{AF09,FD14} and it should be noted that in general the vanishing
viscosity selection does not coincide with the vanishing noise selection.
In analogy to linear stochastic transport equations \eqref{eq:intro-stoch-transport},
stochastic scalar conservation laws \eqref{eq:intro-stoch-burgers-inhomo}
model the evolution of passive scalars in turbulent fluids, so-called
Kraichnan models.

The literature on regularization (i.e.~improvement of regularity)
and well-posedness (i.e.~existence, uniqueness and possibly stability)
by noise is vast and giving a complete survey at this point would
exceed the purpose of this paper. Therefore, we will restrict to those
that seem most relevant for the content of this work and refer to
Flandoli \cite{F11}, Flandoli, Romito \cite{FR02,FR08}, Gyöngy,
Pardoux \cite{GP93}, Butkovsky, Mytnik, Leonid \cite{BM16} for further
references and a more complete account of the literature. Concerning
the case of transport equations with irregular drift \eqref{eq:intro-det-transport},
we mention the works by Flandoli, Gubinelli, Priola \cite{FGP10,FGP13},
Flandoli, Fedrizzi \cite{FF13}, Beck, Flandoli, Gubinelli, Maurelli
\cite{BFGM14} and the references therein. In particular, we would
like to emphasize the work \cite{AF11} by Attanasio, Flandoli which
provides a purely analytic approach to the effect of well-posedness
by noise for \eqref{eq:intro-stoch-transport}, since the proof has
served as an inspiration for some of the steps of the proof presented
in this paper. A regularization by noise effect for \eqref{eq:intro-stoch-transport}
has been first obtained in \cite{FF13} where it has been shown that
solutions to \eqref{eq:intro-stoch-transport} are smooth if the initial
condition is, assuming that $b$ satisfies certain integrability conditions,
slightly more restrictive than the Ladyzhenskaya-Prodi-Serrin condition.
A PDE-based approach and a generalization of these results to drifts
$b$ satisfying the Ladyzhenskaya-Prodi-Serrin condition and to divergence-type
equations has been given in \cite{BFGM14}. A path-by-path approach
to well-posedness by noise has been introduced by Catellier, Gubinelli
in \cite{CG16} and was used by Catellier in \cite{Cat2016} for transport
equations. Another approach based on Malliavin calculus has been introduced
by Menoukeu-Pamen, Meyer-Brandis, Nilssen, Proske in \cite{MMNPZ2013}
and developed in a series of papers, cf.~e.g.~Mohammed, Nilssen,
Proske \cite{MohNilPro2015} on transport equations. We also refer
to Duboscq, Réveillac \cite{DR16} for a generalization of \cite{FGP10}
to SDE with random drift.\\
In some (typically nonlinear) situations, the spatial dependence of
the noise coefficients has proven to be crucial in order to obtain
well-posedness by noise results. More precisely, in Flandoli, Gubinelli,
Priola \cite{FGP11} well-posedness by spatially dependent linear
transport noise for point vortex dynamics informally related to stochastic
2D-Euler equations has been shown. In \cite{DFV14} it has been shown
that the same type of noise can prevent the collapse of point charges
in Vlasov-Poisson equations. \\
More recently, regularizing effects of nonlinear noise in the setting
of (nonlinear) scalar conservation laws has been observed by Gess,
Souganidis in \cite{GS14-2} and in the setting of fully nonlinear
PDE by Gassiat, Gess in \cite{GG16}. Well-posedness of stochastic
scalar conservation laws with random flux has been considered by Lions,
Perthame, Souganidis \cite{LPS13,LPS14}, Gess, Souganidis, \cite{GS14},
Mariani \cite{M10}. 

We next present the idea and an outline of the proof. Our treatment
of \eqref{eq:intro-stoch-burgers-inhomo} is based on the kinetic
formulation of (stochastic) scalar conservation laws as introduced
by Lions, Perthame, Tadmor in \cite{LPT94}. For a function $u:[0,T]\times\R^{d}\to\R$
we introduce the kinetic function $\chi(t,x,\xi):[0,T]\times\R^{d}\times\R\to\R$
by
\begin{equation}
\chi(t,x,\xi)=\chi(u(t,x),\xi):=1_{\xi<u(t,x)}-1_{\xi<0}.\label{def chi-1}
\end{equation}

In the case of a smooth spatial inhomogeneity $b$ and smooth driving
signal $W$, $u$ is an entropy solution to \eqref{eq:intro-stoch-burgers-inhomo}
iff $\chi$ solves the following equation,in the sense of distributions,
\begin{align}
\partial_{t}\chi & =-b(x,\xi)\cdot\nabla\chi-\nabla\chi\cdot\dot{W}_{t}+\partial_{\xi}m,\label{eq:intro-kinetic-eqn}
\end{align}
where $m$ is a nonnegative bounded random measure on $[0,T]\times\R^{d}\times\R$
and the derivatives are intended with respect to $x$ unless differently
specified. In the general case of \eqref{eq:intro-stoch-burgers-inhomo},
we take \eqref{def chi-1}, \eqref{eq:intro-kinetic-eqn} as the definition
of an entropy solution to \eqref{eq:intro-stoch-burgers-inhomo},
where now the term $\nabla\chi\cdot\dot{W}_{t}$ should be interpreted
as a Stratonovich integral, or more precisely,
\begin{align}
\partial_{t}\chi & =-b(x,\xi)\cdot\nabla\chi-\nabla\chi\circ dW_{t}+\partial_{\xi}m\label{eq:intro-kinetic-eqn-stoch}\\
 & =-b(x,\xi)\,\nabla\chi-\nabla\chi\cdot dW_{t}+\frac{1}{2}\Delta\chi+\partial_{\xi}m\,,\nonumber 
\end{align}
see Definition \ref{def:path_e-soln-1} below for details. As in the
deterministic case, the notion of a generalized kinetic solution is
convenient in the construction of an entropy solution since, roughly
speaking, the class of generalized kinetic solutions is stable under
weak limits. Roughly speaking, a function $f$ is said to be a generalized
kinetic solution to \eqref{eq:intro-stoch-burgers-inhomo} if $f$
solves \eqref{eq:intro-kinetic-eqn} for some nonnegative measure
$m$ and $|f|=\sgn(\xi)f\le1$, $\partial_{\xi}f=\d_{0}-\nu$ for
some nonnegative measure $\nu$. The key difference to an entropy
solution is that $f$ is not assumed to be of the form of an kinetic
function \eqref{def chi-1} for some function $u$. \\
The main difficulty then lies in proving that generalized kinetic
solutions are in fact entropy solutions, which boils down to proving
$|f|=1$ a.e.. In order to prove this we aim to estimate the difference
$|f|-f^{2}$ based on \eqref{eq:intro-kinetic-eqn-stoch}. The proof
now consists of two steps. In the first step, an (in)equality for
$|f|-f^{2}$ is derived based on renormalization techniques (cf.~\cite{DPL89,A04-1})
using the assumption $b\in L_{\xi,loc}^{1}(W_{x,loc}^{1,1})$. Informally,
this leads to the equality
\[
\partial_{t}(|f|-f^{2})+b(x,\xi)\cdot\nabla(|f|-f^{2})+\nabla(|f|-f^{2})\circ dW_{t}=(\sgn(\xi)-2f)\partial_{\xi}m.
\]
Passing to the Itô formulation and taking the expectation, we informally
``gain a Laplacian'' similarly to \cite{AF11}. The main difficulty
at this point that is due to the nonlinearity of \eqref{eq:intro-stoch-burgers-inhomo}
is the additional singular term $\partial_{\xi}m$. To handle this
term, in the second step, we integrate in both $\omega$ and $\xi$
(in the Itô formulation), which informally yields
\begin{align*}
 & \partial_{t}\int\E(|f|-f^{2})\,d\xi+\int b(x,\xi)\cdot\nabla\E(|f|-f^{2})\,d\xi+\frac{1}{2}\Delta\int\E(|f|-f^{2})\,d\xi\\
 & =\E\int\vp(\sgn(\xi)-2f)\partial_{\xi}m\,d\xi.
\end{align*}
Since $\partial_{\xi}f=\d_{0}-\nu\le\d_{0}$ this implies
\begin{align*}
\partial_{t}\int\E(|f|-f^{2})\,d\xi+\int b(x,\xi)\cdot\nabla\E(|f|-f^{2})\,d\xi+\frac{1}{2}\Delta\int\E(|f|-f^{2})\,d\xi & \le0
\end{align*}
This is a linear \textit{parabolic} PDE in $\int\E(|f|-f^{2})\,d\xi$,
but, in contrast to the linear setting (i.e.\ for linear transport
equation) in \cite{AF11}, this PDE is not closed, since it involves
both $\int\E(|f|-f^{2})\,d\xi$ and $\int b(x,\xi)\cdot\nabla\E(|f|-f^{2})\,d\xi$.
The rigorous analysis is carried out by passing to the distributional
form. The problem that the above PDE is non-closed then relates to
finding a nonnegative test function $\varphi$, independent of $\xi,$
that satisfies for every $\xi$, 
\[
\partial_{t}\vp+\div(b(x,\xi)\vp)+\D\vp\le C,
\]
for some constant $C>0$. In the analysis of this PDE, we rely on
the boundedness assumption on $b$ and the integrability assumption
on $\div b$. We conclude (Lemma \ref{lem:key_lemma}) that
\begin{align*}
\E\int_{\R^{d}\times\R}(|f_{t}|-f_{t}^{2})\,d\xi dx & \le C\int_{\R^{d}\times\R}(|f_{0}|-f_{0}^{2})\,d\xi dx,
\end{align*}

which implies that $|f|=1$ and ends the proof.
\begin{rem}
\label{rem:div_form}Scalar conservation laws in divergence form
\begin{align*}
 & \partial_{t}u+\div(b(x,u)u)=0,
\end{align*}
with spatially irregular drift $b$, can have different pathological
behaviour than equation \eqref{eq:intro-det-burgers-inhomo}, such
as concentration of mass. For further details we refer to \cite{BFGM14,GS17-2}
and the references therein.
\end{rem}

\subsection{Notation}

We let $(\Omega,\mathcal{A},P)$ be a measurable space, $(\mathcal{F}_{t})_{t}$
be a normal filtration on $(\Omega,\mathcal{A},P)$ (i.e.~$(\mathcal{F}_{t})_{t}$
is right-continuous and $\mathcal{F}_{0}$ contains the null sets
of $(\Omega,\mathcal{A},P)$) and $W=(W_{t})_{t}$ be a $d$-dimensional
Brownian motion on $\Omega$ with respect to the filtration $(\mathcal{F}_{t})_{t}$.
For a $\sigma$-finite measure space $(E,\mathcal{E},\mu)$, we say
that a function $f:E\rightarrow\R$ is measurable if, for every Borel
subset $A$ of $\R$, $f^{-1}(A)$ is in $\mathcal{E}$. Given a
Banach space $V$, we define $L^{0}(E;V)=L^{0}(E,\mathcal{E},\mu;V)$
in two cases:
\begin{enumerate}
\item [(1)]if $V=U^{*}$ is the dual space of a separable Banach space
$U$, $L^{0}(E;V)$ is defined as the space of classes of equivalence,
under the relation ``$f=g$ $\mu$-a.e.'', of weakly-{*} measurable
functions $f:E\rightarrow V$, i.e., for every $\varphi$ in $U$,
$x\mapsto\langle f(x),\varphi\rangle_{V,U}$ is measurable. This applies
to the case of $V=\mathcal{M}(D)$, the space of finite signed measure
over a domain $D$ of $\R^{n}$, $L^{\infty}(D)$, $L^{p}(D)$ for
$1<p<\infty$;
\item [(2)]if $V$ is separable, $L^{0}(E;V)$ is defined as the space
of classes of equivalence, under the relation ``$f=g$ $\mu$-a.e.'',
of weakly measurable functions $f:E\rightarrow V$, i.e., for every
$\varphi$ in $V^{*}$, $x\mapsto\langle f(x),\varphi\rangle_{V,V^{*}}$
is measurable. This applies to the case of $V=C_{0}(D)$, the space
of continuous bounded function on a domain $D$ of $\R^{n}$ vanishing
at infinity, $L^{1}(D)$, $L^{p}(D)$ for $1<p<\infty$.
\end{enumerate}
Similarly, one can define $L^{p}(E;V).$ When $V=\R$ we simply write
$L^{0}(E)$, $L^{p}(E)$ (the usual $L^{p}$ spaces). For a metric,
locally compact, $\sigma$-compact space $S$, the space $\mathcal{M}(S)$
denotes the space of finite signed Borel measures on $S$, $\mathcal{M}_{+}(S)$
the subset of finite nonnegative Borel measures. More details on these
spaces and on measurability issues are given in the appendix. When
not otherwise stated, the spaces $\Omega$, resp. $[0,T]\times\Omega$
are considered endowed with the $\sigma$-algebrae $\mathcal{A}$,
resp. $\mathcal{B}([0,T])\otimes\mathcal{A}$; $\mathcal{P}$ denotes
the progressive $\sigma$-algebra on $[0,T]\times\Omega$, progressive
measurability is measurability with respect to $\mathcal{P}$. We
use progressive measurability instead of predictability because càdlàg
adapted process are $\mathcal{P}$-measurable. The stochastic Fubini
theorem \cite[Exercise 5.17]{RevYor1999}, that we put to use later-on,
is stated for predictable processes but can be immediately extended
to progressively measurable processes in our context (where the martingale
part of the integrator is a Brownian motion or an integral with respect
to Brownian motion). The concepts of entropy solutions, kinetic solutions,
generalized kinetic solutions, kinetic measures have always to be
understood in the sense of equivalence classes, although we will often
consider them as functions when this does not create confusion. In
cases where we need to work with representatives this will be indicated,
although we will often use the same symbol for the class and the representative.

The variables $t$, $\omega$, $x$, $\xi$ denote elements resp.
in $[0,T],$ $\Omega$, $\R^{d}$, $\R$. We often use the short notation
$L_{x}^{p}$, $L_{t,\omega,x}^{p}$, $\mathcal{M}_{x}$, ... for the
spaces $L^{p}(\R^{d}),$ $L^{p}([0,T]\times\Omega\times\R^{d})$,
... and $L_{\xi,[-R,R]}^{p}$ for the space $L^{p}([-R,R])$. We also
use the notation $b\in L_{x}^{p}(L_{\xi,loc}^{\infty})$, $b\in L_{\xi,loc}^{1}(W_{x,loc}^{1,1})$,
... to state that $b\in L_{x}^{p}(L_{\xi,[-R,R]}^{\infty})$ for every
$R>0$, $b\in L_{\xi,[-R,R]}^{1}(W_{x,B_{R}}^{1,1})$ for every $R>0$,
.... The symbols $\nabla$, $\div$, $\Delta$, if not differently
specified, are referred to derivatives in $x$, while derivatives
in $t$ and $\xi$ are denoted by $\partial_{t}$, $\partial_{\xi}$.
As usual in probability theory, $\varphi_{t}$ denotes the evaluation
at time $t$, that is, $\varphi_{t}=\varphi(t)$ {[}note however that
the subscript $t$ in $\partial_{t}$ does only denote the time derivative
and not its evaluation at $t$: in particular, $\int_{0}^{t}g\partial_{t}\varphi\,dr=\int_{0}^{t}g(r)\partial_{t}\varphi(r)\,dr$
denotes the integral from $0$ to $t$ of $g$ times the time derivative
of $\varphi${]}. The symbol $\langle\cdot,\cdot\rangle$ denotes
the scalar product in $L_{x,\xi}^{2}$, unless differently specified.
For example, $\langle\cdot,\cdot\rangle_{t,x,\xi}$ denotes the scalar
product in $L_{t,x,\xi}^{2}$. Sometimes, for a measure $m$ on $[0,T]\times\R^{d}\times\R$,
we use the notation $\langle m,\varphi\rangle dt$ for $\int_{\R^{d}\times\R}\varphi(t,x,\xi)m(dt,dx,d\xi)$.
The convolution operator is denoted by $*_{var}$, where $var$ stands
for the variable (usually $x$ or $\xi$ or both) for which the convolution
is performed. The function $\rho$ denotes a smooth nonnegative compactly
supported even function on $\R^{d}$ such that $\int_{\R^{d}}\rho(x)\,dx=1$,
and $\rho^{\epsilon}:=\epsilon^{-d}\rho(\epsilon^{-1}\cdot)$. Similarly
$\bar{\rho}$ denotes a smooth nonnegative compactly supported even
function on $\R$ such that $\int_{\R}\bar{\rho}(\xi)\,d\xi=1$, and
$\bar{\rho}^{\delta}=\delta^{-1}\bar{\rho}(\delta^{-1}\cdot)$. In
statements and proofs, the letter $C$ denotes a generic positive
constant, which can change from line to line and can depend on $d$
(dimension) and $p$ (integrability exponent assumed for $\div\,b$).
In accordance to \eqref{def chi-1} we use the notation $\chi(\xi,u)=1_{\xi<u}-1_{\xi<0}$.
When we use the kinetic formulation, we write $b$ for $b(x,\xi)$.

\subsection{Organization of the paper}

In Section \ref{sec:Some-general-results} we introduce the notions
of entropy, kinetic and generalized kinetic solutions to \eqref{eq:intro-stoch-burgers-inhomo},
prove a flow-transformation result linking \eqref{eq:intro-stoch-burgers-inhomo}
to a scalar conservation law with random coefficients and prove the
existence of generalized entropy solutions based on stable $L^{p}$-estimates.
Some subtle measurability properties are postponed to the second Appendix
\ref{sec:Appendix}. The results and definitions in Section \ref{sec:Some-general-results}
are applicable under mild assumptions on $b$ and, in particular,
apply without change to the non-perturbed case. In Section \ref{sec:Well-posedness-of-entropy}
it is shown that generalized entropy solutions are entropy solutions
and their uniqueness is deduced using certain parabolic PDE estimates
given in the first Appendix \ref{sec:PDE}. 

\section{Definitions and the existence of generalized Kinetic solutions\label{sec:Some-general-results}}

In this section we give some general definitions and results, which
hold also without noise. In the case of a smooth vector field $b$,
there exists a unique entropy solution. In the general case, even
the existence of an entropy solution may not hold in general. However,
one can get the existence of a so-called generalized kinetic solution.

We start defining the concept of an entropy solution.
\begin{defn}
\label{def:kinetic_meas}A (stochastic) bounded kinetic measure is
a map $m:\Omega\rightarrow\mathcal{M}([0,T]\times\R^{d}\times\R)$,
weakly-{*} measurable, satisfying the following properties:

\begin{enumerate}
\item $m\in L^{\infty}(\Omega;\mathcal{M}([0,T]\times\R^{d}\times\R))$;
\item $m$ is a.s.~non-negative and supported on $[0,T]\times\R^{d}\times[-R,R]$
for some $R>0$ independent of $\omega$;
\item for every $\varphi\in C_{c}^{\infty}([0,T]\times\R^{d}\times\R)$,
the process $(t,\omega)\mapsto\int_{[0,t]\times\R^{d}\times\R}\varphi\,dm$
is progressively measurable.
\end{enumerate}
\end{defn}

Here and in what follows, we can extend definitions and formulations
to test functions $\varphi$ which are not necessarily compactly supported
in the $\xi$ variable, because of the assumption that $m$ is supported
on $[0,T]\times\R^{d}\times[-R,R]$.
\begin{defn}
Let $b\in L_{loc}^{1}(\R^{d+1})$ with $\div\,b$ in $L_{loc}^{1}(\R^{d+1})$
and let $u_{0}\in(L^{1}\cap L^{\infty})(\R^{d})$. An entropy solution
to \eqref{eq:intro-stoch-burgers-inhomo} is a measurable function
$u:[0,T]\times\Omega\times\R^{d}\rightarrow\R$, such that $\chi(t,\omega,x,\xi)=\chi(u(t,\omega,x),\xi)=1_{\xi<u(t,\omega,x)}-1_{\xi<0}$
satisfies the following properties:

\begin{enumerate}
\item $\chi\in L^{\infty}([0,T]\times\Omega;L^{1}(\R^{d}\times\R))$ and
is supported on $[0,T]\times\Omega\times\R^{d}\times[-R,R]$ for some
$R>0$;
\item $\chi$ is a weakly-{*} progressively measurable $L_{x,\xi}^{\infty}$-valued
process;
\item there exists a bounded kinetic measure $m$ such that, for every test-function
$\varphi\in C_{c}^{\infty}([0,T]\times\R^{d}\times\R)$, it holds,
for a.e. $(t,\omega)$,
\begin{align}
\langle\chi_{t},\varphi_{t}\rangle=\langle\chi_{0},\varphi_{0}\rangle & +\int_{0}^{t}\langle\chi,\partial_{t}\varphi+\div(b(x,\xi)\varphi)\rangle\,dr+\int_{0}^{t}\langle\chi,\nabla\varphi\rangle\,dW\nonumber \\
 & +\frac{1}{2}\int_{0}^{t}\langle\chi,\Delta\varphi\rangle\,dr-\int_{[0,t]\times\R^{d}\times\R}\partial_{\xi}\varphi\,dm,\label{eq:kin_form}
\end{align}
with $\chi_{0}(x,\xi)=1_{\xi<u_{0}(x)}-1_{\xi<0}$.
\end{enumerate}
\end{defn}

The function $\chi$ is called a kinetic solution.

The well-known definitions of entropy solutions, kinetic solutions
and kinetic measures in the case of deterministic scalar conservation
laws are recovered in the above definitions by removing the $\omega$
dependence, the progressive measurability assumptions as well as the
second order term and stochastic integral in \eqref{eq:kin_form}.
\begin{rem}
\label{rem:cadlag}(i) For every kinetic solution $\chi$ and test
function $\varphi\in C_{c}^{\infty}([0,T]\times\R^{d}\times\R)$,
$(t,\omega)\mapsto\langle\chi_{t},\varphi_{t}\rangle$ is a semimartingale
admitting a càdlàg version. More precisely, it admits a version which
is the sum of a continuous martingale and a process with $BV$ paths.
Indeed, for every $\varphi$ and every representative of $m$, for
a.e.~$\omega$, the function $t\mapsto\int_{[0,t]\times\R^{d}\times\R}\partial_{\xi}\varphi\,dm$
is of finite variation.

The processes $\int_{[0,t]\times\R^{d}\times\R}\partial_{\xi}\varphi\,dm$
and $\int_{[0,t)\times\R^{d}\times\R}\partial_{\xi}\varphi\,dm$ are
progressively measurable and resp.~càdlàg, càglàd. Moreover, the
(random) times where the paths are discontinuous depend only on $m$
and not on $\varphi$.

(ii) More general, let $\varphi:[0,T]\times\Omega\times\R^{d}\times\R\times\R^{m}\rightarrow\R$
be a measurable bounded function such that: 1) for every $(x,\xi,z)$,
$(t,\omega)\mapsto\varphi(t,\omega,x,\xi,z)$ is progressively measurable;
2) for a.e.~$\omega$, $(t,x,\xi,z)\mapsto\varphi(t,\omega,x,\xi,z)$
is continuous. Then, for every representative of $m$, the maps
\begin{align*}
(t,z,\omega) & \mapsto\int_{[0,t]\times\R^{d}\times\R}\varphi(r,\omega,x,\xi,z)m(r,x,\xi)\,drdxd\xi,\\
(t,z,\omega) & \mapsto\int_{[0,t)\times\R^{d}\times\R}\varphi(r,\omega,x,\xi,z)m(r,x,\xi)\,drdxd\xi
\end{align*}
are measurable and:\\
1) for each $z$ fixed, progressively measurable in $(t,\omega)$;\\
2) for a.e.~$\omega$, with zero set independent of $z$, and each
$z$ fixed, càdlàg, resp.~càglàd, in $t$;\\
3) for a.e.~$\omega$, with zero set independent of $t$, and each
$t$ fixed, continuous in $z$.

This result follows from the combination of the following two facts:
a) We can apply Remark \ref{rmk:measurability_omega} (ii) below to
$z$ fixed to get the progressive measurability and càdlàg/càglàd
property, the latter depending only on $m$ and not on $z$. b) The
integral is continuous with respect to $z$, as consequence of the
dominated convergence theorem.
\end{rem}

\begin{rem}
By equation \eqref{eq:kin_form}, for every test function $\varphi\in C_{c}^{\infty}([0,T]\times\R^{d}\times\R)$,
the quadratic covariance between $\langle\chi,\nabla\varphi\rangle$
and $W$ is $[\langle\chi,\nabla\varphi\rangle,W]_{t}=\int_{0}^{t}\langle\chi,\D\varphi\rangle\,dr$
for a.e.~$(t,\omega)$. Note that by abuse of notation we here use
$\langle\chi,\nabla\varphi\rangle$ to also denote its càdlàg version.

Therefore, the Stratonovich integral $\int_{0}^{t}\langle\chi,\nabla\varphi\rangle\,\circ dW$
makes sense and equation \eqref{eq:kin_form} can be rewritten as
\begin{align*}
\langle\chi_{t},\varphi_{t}\rangle=\langle\chi_{0},\varphi_{0}\rangle & +\int_{0}^{t}\langle\chi,\partial_{t}\varphi+\div(b\varphi)\rangle\,dr+\int_{0}^{t}\langle\chi,\nabla\varphi\rangle\,\circ dW\\
 & -\int_{[0,t]\times\R^{d}\times\R}\partial_{\xi}\varphi\,dm.
\end{align*}
In particular, we see here that equation \eqref{eq:kin_form} is of
hyperbolic type.
\end{rem}

\begin{rem}
\label{rmk:meas_u_chi}By the definition of $\chi$ we have immediately
that, for every $1\le p<\infty$, for a.e. $(t,\omega,x)$,
\begin{align}
u(t,\omega,x) & =\int_{\R}\chi(t,\omega,x,\xi)\,d\xi,\nonumber \\
\frac{1}{p}|u(t,\omega,x)|^{p} & =\int_{\R}|\xi|^{p-1}\sgn(\xi)\chi(t,\omega,x,\xi)\,d\xi.\label{eq:rel_u_chi}
\end{align}
Therefore, the weakly-{*} progressive measurability of $\chi$ implies
that of $u$ and $|u|^{p}$.

Conversely, if $u$ is an $L_{x}^{\infty}$-valued weakly-{*} progressively
measurable process, then by Proposition \ref{prop:Lp_two_var} below
$u$ is $\mathcal{\mathcal{P\otimes}B}(\R^{d})$-measurable as a real-valued
function of $(t,\omega,x)$ (more precisely, there exists a version
of $u$ which is $\mathcal{\mathcal{P\otimes}B}(\R^{d})$-measurable).
Since $(v,\xi)\mapsto1_{\xi<v}-1_{\xi<0}$ is Borel measurable, the
function $(t,\omega,x,\xi)\mapsto\chi(t,\omega,x,\xi)$ is $\mathcal{\mathcal{P\otimes}B}(\R^{d})\otimes\mathcal{B}(\R)$-measurable;
that is, $\chi$ is a $L_{x,\xi}^{\infty}$-valued weakly-{*} progressively
measurable process.

From the formulas above and the fact that $\chi=0$ for $|\xi|>R$,
it follows that $u$ is in $L^{\infty}([0,T]\times\Omega;L^{\infty}(\R^{d}))\cap L^{\infty}([0,T]\times\Omega;L^{1}(\R^{d}))$.
Hence, $u$ is in $L^{\infty}([0,T]\times\Omega;L^{p}(\R^{d}))$ and
$\chi$ is in $L^{\infty}([0,T]\times\Omega;L^{p}(\R^{d}\times\R))$
for every $p\in[1,\infty]$.
\end{rem}

\subsection{A flow transformation}

Before giving the existence result, we recall the following transformation
that links equation \eqref{eq:intro-stoch-burgers-inhomo} to a scalar
conservation law with random coefficients.
\begin{prop}
\label{prop:transformation}Let $b\in L_{loc}^{1}(\R^{d+1})$ with
$\div\,b$ in $L_{loc}^{1}(\R^{d+1})$. A function $u$ is an entropy
solution to \eqref{eq:intro-stoch-burgers-inhomo} iff the function
$\td u(t,x):=u(t,x+W_{t})$ is $L_{x}^{\infty}$-valued weakly-{*}
progressively measurable and is a.s.~an entropy solution to 
\begin{equation}
\partial_{t}\td u(t,x)+b(x+W_{t},\td u(t,x))\cdot\nabla\td u(t,x)=0.\label{eq:transformed}
\end{equation}
More precisely, $\chi=\chi(u)$ is a kinetic solution to \eqref{eq:intro-stoch-burgers-inhomo}
with kinetic measure $m$ iff:

\begin{enumerate}
\item $\td\chi(t,x,\xi):=1_{\xi<\td u(t,\omega,x)}-1_{\xi<0}=\chi(t,x+W_{t},\xi)$
is $L_{x,\xi}^{\infty}$-valued weakly-{*} progressively measurable.
\item $\td m(t,x,\xi)=m(t,x+W_{t},\xi)$ is weakly-{*} progressively measurable,
that is, for every $\psi\in C_{c}^{\infty}([0,T]\times\R^{d}\times\R)$,
the process $(t,\omega)\mapsto\int_{[0,t]\times\R^{d}\times\R}\psi\,d\td m$
is progressively measurable.
\item For a.e.~$\omega$, $\td\chi{}^{\omega}$ is a kinetic solution to
\eqref{eq:transformed} with kinetic measure $\td m{}^{\omega}$.
In particular, in the sense of distributions,
\begin{align}
\partial_{t}\td\chi & +b(x+W_{t},\xi)\cdot\nabla\td\chi=\partial_{\xi}\td m.\label{eq:transformed_kinetic}
\end{align}
\end{enumerate}
\end{prop}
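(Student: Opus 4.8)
The plan is to reduce the statement to its kinetic formulation and verify the detailed claims about $\chi,\td\chi,m,\td m$; the equivalence for entropy solutions $u,\td u$ then follows since, by definition, $u$ is an entropy solution iff $\chi=\chi(u)$ is a kinetic solution, and $\td\chi(t,x,\xi)=\chi(t,x+W_t,\xi)=1_{\xi<\td u(t,x)}-1_{\xi<0}$ is exactly the kinetic function of $\td u(t,x)=u(t,x+W_t)$. As the spatial shift $x\mapsto x+W_t$ is invertible (with inverse $x\mapsto x-W_t$), it suffices to prove one implication; the reverse one is obtained by running the same argument for the inverse shift.

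For part (1) I would argue that composing a weakly-$*$ progressively measurable process with the adapted, continuous shift $x\mapsto x+W_t$ preserves weak-$*$ progressive measurability, both for $\chi$ and for the measure $m$. These are exactly the delicate measurability points the paper isolates, so rather than reproving them I would invoke the composition and change-of-variables results of the Appendix (around Remark \ref{rmk:measurability_omega}).

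The analytic core is to show that, for a.e.\ $\omega$, $\td\chi^\omega$ solves \eqref{eq:transformed_kinetic} distributionally. Fix $\psi\in C_c^\infty([0,T]\times\R^d\times\R)$ and set $\varphi(t,x,\xi):=\psi(t,x-W_t,\xi)$. By the change of variables $x\mapsto x+W_t$ in the spatial integrals one has $\langle\chi_t,\varphi_t\rangle=\langle\td\chi_t,\psi_t\rangle$, $\int\partial_\xi\varphi\,dm=\int\partial_\xi\psi\,d\td m$, and $\langle\chi_t,\div(b\varphi)\rangle=\langle\td\chi_t,\div(b(\cdot+W_t,\xi)\psi)\rangle$. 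The process $\varphi$ is a spatially smooth semimartingale with It\^o differential $d\varphi=\big(\partial_t\psi+\tfrac12\Delta\psi\big)\,dt-\nabla\psi\cdot dW$ (arguments evaluated at $(t,x-W_t,\xi)$, so that $\nabla\varphi,\Delta\varphi$ are the shifted spatial derivatives of $\psi$). I would then compute $d\langle\chi_t,\varphi_t\rangle$ by the It\^o product rule for the pairing, namely $\langle d\chi_t,\varphi_t\rangle+\langle\chi_t,d\varphi_t\rangle+\langle d\chi_t,d\varphi_t\rangle$, where the first term is read off from \eqref{eq:kin_form} with $\varphi$ frozen and the last is the It\^o correction. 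Two cancellations occur: the stochastic integrals $\langle\chi,\nabla\varphi\rangle\cdot dW$ from \eqref{eq:kin_form} and $-\langle\chi,\nabla\varphi\rangle\cdot dW$ from $d\varphi$ cancel; and the It\^o correction $\langle d\chi_t,d\varphi_t\rangle$, obtained by pairing the diffusion coefficient $-\nabla\chi$ of $d\chi$ with $-\nabla\varphi$ of $d\varphi$, equals $-\langle\chi,\Delta\varphi\rangle\,dt$, which cancels the sum of the two $\tfrac12\langle\chi,\Delta\varphi\rangle\,dt$ terms coming from \eqref{eq:kin_form} and from the drift of $d\varphi$. What survives is
\[
\langle\td\chi_t,\psi_t\rangle=\langle\td\chi_0,\psi_0\rangle+\int_0^t\langle\td\chi,\partial_t\psi+\div(b(\cdot+W_r,\xi)\psi)\rangle\,dr-\int_{[0,t]\times\R^d\times\R}\partial_\xi\psi\,d\td m,
\]
which is precisely the kinetic formulation of \eqref{eq:transformed}. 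Taking a countable dense family of test functions $\psi$ and using continuity, this holds for all $\psi$ off a single null set of $\omega$, so $\td\chi^\omega$ is a.s.\ a kinetic solution with measure $\td m^\omega$ and \eqref{eq:transformed_kinetic} follows.

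The main obstacle is the rigorous justification of this It\^o--Wentzell step: $\varphi_t=\psi(t,\cdot-W_t,\xi)$ is only a semimartingale in $t$ (not classically differentiable), $\chi$ is merely bounded, and $m$ is a measure. I would make it rigorous by a double regularization — mollifying $\chi$ in $x$ so that all pairings and the It\^o correction are classical, and approximating $W$ by smooth adapted paths $W^n$ so that $\varphi^n$ is genuinely $C^1$ in $t$ and the ordinary product rule applies — and then passing to the limit using the $L^p$-bounds of Remark \ref{rmk:meas_u_chi}, the finiteness of $m$, and the fact that the measure term has finite variation and so does not enter any quadratic covariation. The covariation identity recorded in the Remark following \eqref{eq:kin_form} (that the covariation of $\langle\chi,\nabla\varphi\rangle$ with $W$ produces $\int\langle\chi,\Delta\varphi\rangle\,dr$, the sign in the cross term coming out correctly because of the minus in the martingale part of $d\varphi$) is precisely what drives the Laplacian cancellation and what must be shown to be stable under these approximations.
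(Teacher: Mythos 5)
Your proposal follows essentially the same route as the paper: reduce to the kinetic formulation, delegate the measurability of $\tilde\chi$ and $\tilde m$ to the composition results around Remark \ref{rmk:measurability_omega}, mollify $\chi$ in $(x,\xi)$, apply It\^o's product rule to $\chi^{\epsilon}(t,x,\xi)\,\tilde\varphi(t,x-W_t,\xi)$, observe exactly the two cancellations you describe (the opposite stochastic integrals, and the covariation term killing the two half-Laplacians), integrate in $(x,\xi)$, pass to the limit $\epsilon\to0$, and remove the $\tilde\varphi$-dependence of the null set by a density argument; the converse is handled symmetrically using the progressive measurability hypothesis. The computation and its conclusion coincide with the paper's Step 2 and Step 3.

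The one place where your implementation deviates is the proposed additional approximation of $W$ by smooth adapted paths $W^{n}$. This is unnecessary and is in fact the more delicate of your two regularizations: once $\chi$ is mollified, $\chi^{\epsilon}(t,x,\xi)$ is a genuine (c\`adl\`ag) semimartingale for each fixed $(x,\xi)$ and $\tilde\varphi(t,x-W_t,\xi)$ is a continuous semimartingale, so the c\`adl\`ag It\^o product formula (\cite[Theorem 33]{Pro2004}, as the paper uses) applies directly and produces the covariation term on the nose. If instead you smooth $W$, the ordinary product rule for $W^{n}$ contains no covariation term at all, and the missing $-\langle\chi,\Delta\varphi\rangle\,dt$ must re-emerge in the limit $n\to\infty$ as a Wong--Zakai correction of the Riemann integrals $\int\langle\chi^{\epsilon},\nabla\psi(\cdot,\cdot-W^{n},\cdot)\rangle\,\dot W^{n}\,dr$; justifying that limit for an integrand which is itself a $W$-adapted semimartingale (and with the usual non-adaptedness of piecewise-linear interpolants) is a separate theorem that you would have to prove, not a routine passage to the limit. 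I would drop the $W^{n}$ layer and keep only the mollification of $\chi$, which is what the paper does.
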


\begin{proof}
\textit{Step 1:} Progressive measurability.

Progressive measurability of $\td\chi$ can be deduced from progressive
measurability of $\chi$ and vice versa. Indeed, for every $\varphi$
in $C_{c}^{\infty}(\R^{d}\times\R)$, $\langle\td\chi,\tilde{\varphi}\rangle=\langle\chi,\tilde{\varphi}(x-W_{t},\xi)\rangle$
is progressively measurable, by Remark \ref{rmk:measurability_omega}.
A similar reasoning applies to $\td m$: For every $\tilde{\varphi}\in C_{c}^{\infty}([0,T]\times\R^{d}\times\R)$,
the process $(t,\omega)\mapsto\int_{[0,t]\times\R^{d}\times\R}\tilde{\varphi}\,d\td m=\int_{[0,t]\times\R^{d}\times\R}\varphi(r,x-W_{r},\xi)\,dm$
is progressively measurable, again by Remark \ref{rmk:measurability_omega}.

\textit{Step 2:} Equation \eqref{eq:kin_form} implies \eqref{eq:transformed_kinetic}.

Since $\langle\td\chi,\tilde{\varphi}\rangle=\langle\chi,\tilde{\varphi}(x-W_{t},\xi)\rangle$
for any (deterministic) test function $\tilde{\varphi}$, the statement
would follow if we could take $\tilde{\varphi}(x-W_{t})$ as a test
function. Unfortunately, this is not possible, since $\tilde{\varphi}(x+W_{t})$
is not deterministic. Therefore we use a regularization procedure:
We consider $\chi^{\epsilon}$, a regularization of $\chi$ with respect
to $x$ and $\xi$. Then, for fixed $x$ and $\xi$, we multiply $\chi^{\epsilon}$
by $\tilde{\varphi}(x-W_{t})$ using Itô's formula, integrate in $x$
and $\xi$ and pass to the limit $\epsilon\rightarrow0$.

We consider a regularization of $\chi$ in both $x$ and $\xi$, i.e.~$\chi_{t}^{\epsilon}(x,\xi):=\langle\chi_{t},\rho_{\epsilon}(x-\cdot)\bar{\rho}_{\epsilon}(\xi-\cdot)\rangle$.
For every $(x,\xi)$, we have the following equation, outside a null
set possibly depending on $(x,\xi)$ and $\epsilon$:
\begin{align*}
\chi^{\epsilon}(t,x,\xi) & =\chi^{\epsilon}(0,x,\xi)+\int_{0}^{t}\langle\chi,\div(b(\cdot,\cdot)\rho_{\epsilon}(x-\cdot)\bar{\rho}_{\epsilon}(\xi-\cdot))\rangle\,dr\\
 & \ +\int_{0}^{t}\langle\chi,\nabla\rho_{\epsilon}(x-\cdot)\bar{\rho}_{\epsilon}(\xi-\cdot)\rangle\,dW+\frac{1}{2}\int_{0}^{t}\langle\chi,\Delta\rho_{\epsilon}(x-\cdot)\bar{\rho}_{\epsilon}(\xi-\cdot)\rangle\,dr\\
 & \ -\int_{[0,t]}\langle m,\partial_{\xi}\rho_{\epsilon}(x-\cdot)\bar{\rho}_{\epsilon}(\xi-\cdot)\rangle\,dr.
\end{align*}
We multiply $\chi^{\epsilon}$ by $\tilde{\varphi}(t,x-W_{t},\xi)$
and use Itô's formula for càdlàg processes (see, for example, \cite[Chapter II Theorem 33]{Pro2004}),
applied to $f(x,y)=xy$. Note that no jump term appears here because
the function $f$ is bilinear and thus, with the notation of Protter
\cite[Chapter II Theorem 33]{Pro2004}, $f(x_{s},y_{s})-f(x_{s-},y_{s})-\partial_{x}f(x_{s-},y_{s})\cdot\Delta x_{s}=0$.
Hence, we get, outside a null set as above,
\begin{align*}
 & \chi^{\epsilon}(t,x,\xi)\tilde{\varphi}(t,x-W_{t},\xi)\\
 & =\chi^{\epsilon}(0,x,\xi)\tilde{\varphi}(0,x,\xi)+\int_{0}^{t}\chi^{\epsilon}(r,x,\xi)\partial_{t}\tilde{\varphi}(r,x-W,\xi)dr\\
 & +\int_{0}^{t}\langle\chi,\div(b(\cdot,\cdot)\rho_{\epsilon}(x-\cdot)\bar{\rho}_{\epsilon}(\xi-\cdot))\rangle\tilde{\varphi}(r,x-W,\xi)\,dr\\
 & \ +\int_{0}^{t}\langle\chi,\nabla\rho_{\epsilon}(x-\cdot)\bar{\rho}_{\epsilon}(\xi-\cdot)\rangle\tilde{\varphi}(r,x-W,\xi)\,dW\\
 & \ +\frac{1}{2}\int_{0}^{t}\langle\chi,\Delta\rho_{\epsilon}(x-\cdot)\bar{\rho}_{\epsilon}(\xi-\cdot)\rangle\tilde{\varphi}(r,x-W,\xi)\,dr\\
 & \ -\int_{[0,t]}\langle m,\rho_{\epsilon}(x-\cdot)\partial_{\xi}\bar{\rho}_{\epsilon}(\xi-\cdot)\rangle\tilde{\varphi}(r,x-W,\xi)\,dr\\
 & \ -\int_{0}^{t}\chi^{\epsilon}(r,x,\xi)\nabla\tilde{\varphi}(r,x-W,\xi)\,dW+\frac{1}{2}\int_{0}^{t}\chi^{\epsilon}(r,x,\xi)\Delta\tilde{\varphi}(r,x-W,\xi)\,dr\\
 & \ +\int_{0}^{t}\langle\chi,\nabla\rho_{\epsilon}(x-\cdot)\bar{\rho}_{\epsilon}(\xi-\cdot)\rangle\cdot\nabla\tilde{\varphi}(r,x-W,\xi)\,dr.
\end{align*}
By the stochastic Fubini theorem (see for example Revuz, Yor \cite[Exercise 5.17]{RevYor1999})
and Remark \ref{rem:cadlag}, all the addends have measurable versions
in $(t,\omega,x,\xi)$; moreover, for these versions the equality
above is true for~a.e. $(t,\omega,x,\xi)$, we can integrate in $x$
and in $\xi$ and exchange the order of integration. We do so and
bring the convolution on $\varphi$: we get, with $\varphi(t,x,\xi)=\tilde{\varphi}(t,x-W_{t},\xi)$,
\begin{align*}
\langle\chi_{t},\varphi_{t}^{\epsilon}\rangle & =\langle\chi_{0},\varphi_{0}^{\epsilon}\rangle+\int_{0}^{t}\langle\chi,\partial_{t}\varphi^{\ve}\rangle dr+\int_{0}^{t}\langle\chi,\div(b\varphi^{\epsilon})\rangle\,dr\\
 & \quad+\int_{0}^{t}\langle\chi,\nabla\varphi^{\epsilon}\rangle\,dW+\frac{1}{2}\int_{0}^{t}\langle\chi,\Delta\varphi^{\epsilon}\rangle\,dr-\int_{[0,t]}\langle m,\partial_{\xi}\varphi^{\epsilon}\rangle\,dr\\
 & \quad-\int_{0}^{t}\langle\chi,\nabla\varphi^{\epsilon}\rangle\,dW+\frac{1}{2}\int_{0}^{t}\langle\chi,\Delta\varphi^{\epsilon}\rangle\,dr-\int_{0}^{t}\langle\chi,\Delta\varphi^{\epsilon}\rangle\,dr\\
 & =\langle\chi_{0},\varphi_{0}^{\epsilon}\rangle+\int_{0}^{t}\langle\chi,\partial_{t}\varphi^{\ve}\rangle dr+\int_{0}^{t}\langle\chi,\div(b\varphi^{\epsilon})\rangle\,dr-\int_{[0,t]}\langle m,\partial_{\xi}\varphi^{\epsilon}\rangle\,dr.
\end{align*}
 Finally, we let $\epsilon$ go to $0$ and use the change of variable
$\tilde{x}=x-W_{t}$, to obtain
\begin{align}
\langle\tilde{\chi}_{t},\tilde{\varphi}_{t}\rangle & =\langle\tilde{\chi}_{0},\tilde{\varphi}_{0}\rangle+\int_{0}^{t}\langle\tilde{\chi},\partial_{t}\tilde{\varphi}\rangle dr+\int_{0}^{t}\langle\tilde{\chi},\div(\tilde{b}\tilde{\varphi})\rangle\,dr-\int_{[0,t]}\langle\tilde{m},\partial_{\xi}\tilde{\varphi}\rangle\,dr.\label{eq:random_CL_nullset}
\end{align}

This formula is valid for every $\tilde{\varphi}$ smooth test function
(with compact support), on a full measure set in $(t,\omega)$ which
can depend on $\tilde{\varphi}$. To make this set independent of
$\tilde{\varphi}$, we use a density argument. Let $D$ be a countable
dense set in $C_{c}^{\infty}(\R^{d}\times\R)$ and let $F$ be a full
measure set in $(t,\omega)$ satisfying: for every $(t,\omega)$ in
$F$ and for every $\tilde{\varphi}$ in $D$, $\tilde{m}(\omega)$
is a bounded measure and \eqref{eq:random_CL_nullset} holds. Now,
for a given test function $\tilde{\varphi}$, we take a sequence $(\tilde{\varphi}_{n})_{n}$
in $D$ converging to $\tilde{\varphi}$ in $C_{b}^{2}$; we pass
to the limit in \eqref{eq:random_CL_nullset} for $\tilde{\varphi}_{n}$
(using that $\tilde{\chi}$ is bounded for every $(t,\omega)$) and
we get \eqref{eq:random_CL_nullset} for $\tilde{\varphi}$ for every
$(t,\omega)$ in $F$. The proof of the first part is complete.

\textit{Step 3:} Equation \eqref{eq:transformed_kinetic} and weak-{*}
progressive measurability imply \eqref{eq:kin_form}.

Since the strategy is similar to that of the first part, we will only
sketch it. We regularize $\tilde{\chi}$ by convolving it with an
approximate identity, obtaining $\tilde{\chi}^{\epsilon}$. The progressive
measurability hypothesis implies that $\tilde{\chi}^{\epsilon}$ is
an Itô process. Therefore, for every test function $\varphi$, we
can multiply it by $\varphi(t,x+W_{t},\xi)$ and apply Itô's formula.
By Fubini's theorem, the stochastic Fubini theorem and Remark \ref{rem:cadlag}
we can integrate in $x$ and in $\xi$ and exchange the order of integration.
Then we bring the convolution on $\varphi$, let $\epsilon$ go to
$0$ and change variable to get finally \eqref{eq:kin_form}.
\end{proof}

\subsection{The case of smooth coefficients}

In this section we consider the case of a smooth coefficient $b\in C_{c}^{\infty}(\R^{d+1})$
and a smooth initial condition $u_{0}\in C_{c}^{\infty}(\R^{d})$
and derive stable a priori bounds. For simplicity of notation, we
set $R_{0}=\|u_{0}\|_{L^{\infty}}$.
\begin{prop}
\label{prop:existence_smooth_b}Let $u_{0}\in C_{c}^{\infty}(\R^{d})$
and $b\in C_{c}^{\infty}(\R^{d+1})$. Then there is a unique entropy
solution $u$ to \eqref{eq:intro-stoch-burgers-inhomo}. Moreover,
we have
\begin{equation}
\esssup_{\omega\in\Omega}\sup_{t\in[0,T]}\|u(t)\|_{L^{\infty}}\le\|u_{0}\|_{L^{\infty}}\label{eq:apriori_linfty_bound}
\end{equation}
and, for every $p\ge1$ finite,
\begin{align}
 & \esssup_{\omega\in\Omega}\sup_{t\in[0,T]}\|u(t)\|_{L^{p}}^{p}+p(p-1)\int_{[0,T]}\int\int|\xi|^{p-2}md\xi dxdr\label{eq:apriori_lp_bounds}\\
 & \le\|u_{0}\|_{L^{p}}^{p}+p\|u_{0}\|_{L_{x}^{\infty}}^{p-1}\|\div\,b\|_{L^{1}([0,T]\times\R^{d}\times[-R_{0},R_{0}])}.\nonumber 
\end{align}
Moreover, $\chi$ and $m$ are supported a.s. on $[0,T]\times\R^{d}\times[-R_{0},R_{0}]$.
\end{prop}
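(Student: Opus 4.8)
The plan is to reduce the stochastic problem to a pathwise deterministic scalar conservation law by means of the flow transformation of Proposition~\ref{prop:transformation}, to solve the latter by the classical Kru\v{z}kov/kinetic theory, to read off the a priori bounds at the level of the transformed equation, and finally to transfer everything back. Uniqueness will follow in the same breath: any entropy solution to \eqref{eq:intro-stoch-burgers-inhomo} transforms, by Proposition~\ref{prop:transformation}, into a pathwise entropy solution of \eqref{eq:transformed}, so pathwise uniqueness for the deterministic equation forces uniqueness of $u$.

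First I would fix (for a.e.\ $\omega$) the transformed equation \eqref{eq:transformed}, whose flux $(t,x,\xi)\mapsto b(x+W_{t},\xi)$ is smooth and compactly supported in $(x,\xi)$ and continuous in $t$, since $t\mapsto W_{t}$ is continuous and $b\in C_{c}^{\infty}(\R^{d+1})$; the same holds for $\div b$. For such a time-dependent smooth flux the classical theory applies pathwise and yields a unique entropy solution $\td u$ with a nonnegative kinetic measure $\td m$ solving \eqref{eq:transformed_kinetic}, together with the maximum principle $\|\td u(t)\|_{L^{\infty}}\le\|u_{0}\|_{L^{\infty}}=:M$. Since $\td\chi=1_{\xi<\td u}-1_{\xi<0}$ vanishes for $|\xi|>M$ and the kinetic (entropy defect) measure is supported where the solution takes its values, both $\td\chi$ and $\td m$ are supported in $[0,T]\times\R^{d}\times[-M,M]$; as $\|u(t)\|_{L^{\infty}}=\|\td u(t)\|_{L^{\infty}}$ and $\chi(t,x,\xi)=\td\chi(t,x-W_{t},\xi)$, this gives \eqref{eq:apriori_linfty_bound} and the stated support of $\chi,m$.

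For the $L^{p}$ bound I would test \eqref{eq:transformed_kinetic} with the entropy weight $\beta(\xi)=p|\xi|^{p-1}\sgn(\xi)$ and integrate in $x$ and $\xi$. Using $\int_{\R}\beta\,\td\chi\,d\xi=|\td u|^{p}$, writing $b\cdot\nabla\td\chi=\div(b\td\chi)-(\div b)\td\chi$ so that the divergence term integrates to zero in $x$, and integrating the $\partial_{\xi}\td m$ term by parts in $\xi$ (noting $\beta'=p(p-1)|\xi|^{p-2}$), one obtains the balance
\begin{align*}
\int_{\R^{d}}|\td u(t)|^{p}\,dx+p(p-1)\int_{0}^{t}\!\!\int_{\R^{d}}\!\int_{\R}|\xi|^{p-2}\,d\td m
=\|u_{0}\|_{L^{p}}^{p}+\int_{0}^{t}\!\!\int_{\R^{d}}\!\int_{\R}\beta(\xi)\,(\div b)(x+W_{r},\xi)\,\td\chi\,d\xi\,dx\,dr.
\end{align*}
On the support $|\xi|\le M$ one has $|\beta|\le pM^{p-1}$ and $|\td\chi|\le1$, and the change of variables $x\mapsto x+W_{r}$ turns the last integral into $\|\div b\|_{L^{1}}$ over $[0,T]\times\R^{d}\times[-M,M]$ (the $r$-integration supplying the time factor). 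This per-time balance yields both the essential-sup control of $\|u(t)\|_{L^{p}}$ and the bound on the total kinetic dissipation, i.e.\ \eqref{eq:apriori_lp_bounds}; all the constants being deterministic, taking the essential supremum over $\omega$ is free.

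Existence of $u$ then follows by setting $u(t,x):=\td u(t,x-W_{t})$ and invoking Proposition~\ref{prop:transformation}, \emph{once} $\td u$ (equivalently $u$) is known to be weakly-$*$ progressively measurable. This measurability is the main obstacle, since the $\td u$ are constructed pathwise and must be assembled into a genuine adapted process. I would obtain it by approximating $W$ by time-adapted, piecewise-smooth paths (or, equivalently, by the vanishing-viscosity scheme, whose transformed version is a random smooth parabolic equation), for which the solution depends measurably---indeed continuously in $L^{1}_{loc}$, through Kru\v{z}kov's $L^{1}$-stability with respect to the flux---on finitely many increments of $W$, and then passing to the limit; as the solution at time $t$ depends only on $W|_{[0,t]}$, progressive measurability is preserved. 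The delicate measurability bookkeeping needed to make this rigorous is the content deferred to the Appendix.
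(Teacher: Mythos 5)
Your proposal is correct and follows essentially the same route as the paper: reduce via the flow transformation of Proposition~\ref{prop:transformation} to a pathwise deterministic conservation law with smooth time-dependent flux, obtain the $L^{\infty}$ bound from the maximum principle and the $L^{p}$/kinetic-measure bound by testing the kinetic equation with the entropy weight $\sgn(\xi)|\xi|^{p-1}$ (the paper regularizes this weight and uses Fatou, so it only asserts an inequality rather than your exact balance, but the conclusion is identical), and recover adaptedness from the continuous dependence of the entropy solution on the flux composed with the measurable map $\omega\mapsto b(\cdot+W_{\cdot}(\omega),\cdot)$. The remaining measurability bookkeeping is indeed deferred to the appendix in the paper as well.
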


\begin{proof}
\textit{Step 1:} We start with the equation
\begin{align}
\partial_{t}v+g(t,x,v)\cdot\nabla v & =0\label{eq:det_SCL}
\end{align}
for some $g\in C([0,T];C_{b}^{3}(\R^{d+1}))$, i.e.~three times continuously
differentiable with bounded derivatives, and initial condition $u_{0}\in C_{c}^{\infty}(\R^{d})$.

We first note that, due to the regularity of $g$ one may rewrite
\eqref{eq:det_SCL} in divergence form with a force. Following Kružkov
\cite[Theorem 5 and Section 5]{Kru1970}, there exists a bounded entropy
solution $v=v^{g}$ to \eqref{eq:det_SCL}. This solution can be constructed
by first approximating $g$ by a smooth $g^{\d}$ and then considering
a vanishing viscosity approximation. That is, $v$ can be obtained
as an a.e.~limit of the solutions $v^{\ve,\d}$ to
\[
\partial_{t}v^{\epsilon,\d}+g^{\d}(t,x,v^{\epsilon,\d})\cdot\nabla v^{\epsilon,\d}=\epsilon\Delta v^{\epsilon,\d}.
\]
The maximum principle applied to these equations yields $\|v^{\epsilon,\d}\|_{L_{t,x}^{\infty}}\le\|u_{0}\|_{L_{x}^{\infty}}$.
Passing to the limit, we obtain the bound
\begin{equation}
\|v\|_{L_{t,x}^{\infty}}\le\|u_{0}\|_{L_{x}^{\infty}}.\label{eq:linfty_for_v}
\end{equation}

As a consequence of Lécureux-Mercier \cite[Corollary 2.5 and Theorem 2.6]{Lec2010},
the solution $v^{g}$ is in $C([0,T];L^{1}(\R^{d}))$ and is unique.
Moreover, the map $C([0,T];C_{b}^{3}(\R^{d+1}))\ni g\mapsto v^{g}\in C([0,T];L^{1}(\R^{d}))$
is locally Lipschitz continuous.

Denote by $\chi=\chi^{g}$ the associated kinetic solution. The bound
\eqref{eq:linfty_for_v} implies that, for every $t$ and for a.e.~$x$,
$\chi(t,x,\cdot)=\chi^{g}(t,x,\cdot)$ is supported on $[-\|u_{0}\|_{L_{x}^{\infty}},\|u_{0}\|_{L_{x}^{\infty}}]=[-R_{0},R_{0}]$.
We have also $\|\chi_{t}-\chi_{s}\|_{L_{x,\xi}^{1}}=\|v_{t}-v_{s}\|_{L_{x}^{1}}$
and $\|\chi_{t}^{g^{1}}-\chi_{t}^{g^{2}}\|_{L_{x,\xi}^{1}}=\|v_{t}^{g^{1}}-v_{t}^{g^{2}}\|_{L_{x}^{1}}$.
Consequently, $\chi$ is in $C([0,T];L^{1}(\R^{d}\times\R))$ and
the map $C([0,T];C_{b}^{3}(\R^{d+1}))\ni g\mapsto\chi^{g}\in C([0,T];L^{1}(\R^{d}\times\R))$
is locally Lipschitz continuous. As a consequence, the maps
\begin{align*}
[0,T]\times C([0,T];C_{b}^{3}(\R^{d+1}))\ni(t,g) & \mapsto v_{t}^{g}\in L^{1}(\R^{d})\\{}
[0,T]\times C([0,T];C_{b}^{3}(\R^{d+1}))\ni(t,g) & \mapsto\chi_{t}^{g}\in L^{1}(\R^{d}\times\R)
\end{align*}
are continuous.

The existence of a kinetic measure $m^{g}$ associated to $\chi^{g}$
can be derived as by Dalibard in \cite[Section 2.2]{Dal2006} extended
to the time dependent and non conservative case; that is, for every
$\varphi$ compactly supported we have
\begin{align}
\int_{[0,T]\times\R^{d}\times\R}\partial_{\xi}\varphi\,dm^{g}= & -\langle\chi_{T}^{g},\varphi_{T}\rangle+\langle\chi_{0}^{g},\varphi_{0}\rangle\label{eq:kinetic_approx}\\
 & +\int_{0}^{T}\langle\chi^{g},\partial_{t}\varphi+\div(g(r,x,\xi)\varphi)\rangle\,dr.\nonumber 
\end{align}
Therefore, $m^{g}$ is uniquely determined and supported on $[0,T]\times\R^{d}\times[-R_{0},R_{0}]$
and \eqref{eq:kinetic_approx} is satisfied for all smooth $\vp$
compactly supported in $x$.

In order to obtain the estimate \eqref{eq:apriori_lp_bounds}, we
consider the test functions given by $(\sgn(\xi)|\xi|^{p-1})^{\epsilon}\psi_{1/\epsilon}(x)$;
here $\psi_{1/\epsilon}$ is an increasing sequence of smooth functions,
$[0,1]$-valued, with values $1$ on $B_{1/\epsilon}$, $0$ on $B_{2/\epsilon}^{c}$
and such that $|\nabla\psi_{1/\epsilon}(x)|\le2\epsilon$ for every
$x$ and $(\sgn(\xi)|\xi|^{p-1})^{\epsilon}:=\sgn(\cdot)|\cdot|^{p-1}*_{\xi}\bar{\rho}_{\epsilon}$.
In particular, $\sgn(\xi)(\sgn(\xi)|\xi|^{p-1})^{\epsilon}$ is a
sequence of nonnegative functions converging pointwise on $\R\setminus\{0\}$
to $|\xi|^{p-1}$. Moreover, in the case $p>1$, $\partial_{\xi}(\sgn(\xi)|\xi|^{p-1})^{\epsilon}$
converges pointwise on $\R$ to $(p-1)|\xi|^{p-2}$, with the convention
$|0|^{p-2}=+\infty$ for $p<2$ and $|0|^{0}=1$ for $p=2$. Due to
\eqref{eq:kinetic_approx}, we have
\begin{align*}
 & \langle\chi_{t}^{g},(\sgn(\xi)|\xi|^{p-1})^{\epsilon}\psi_{1/\epsilon}\rangle+\int_{[0,t]\times\R^{d}\times\R}\frac{d}{d\xi}(\sgn(\xi)|\xi|^{p-1})^{\epsilon}\psi_{1/\epsilon}\,dm^{g}\\
 & =\langle\chi_{0}^{g},(\sgn(\xi)|\xi|^{p-1})^{\epsilon}\psi_{1/\epsilon}\rangle+\int_{0}^{t}\langle\chi^{g},\div\,g(r,x,\xi)(\sgn(\xi)|\xi|^{p-1})^{\epsilon}\psi_{1/\epsilon}\rangle\,dr\\
 & +\int_{0}^{t}\langle\chi^{g},g(r,x,\xi)(\sgn(\xi)|\xi|^{p-1})^{\epsilon}\cdot\nabla\psi_{1/\epsilon}\rangle\,dr.
\end{align*}
In the case $p>1,$ we take the $\liminf$ for $\epsilon\rightarrow0$
and, recalling that $\chi_{t}^{g}(\sgn(\xi)|\xi|^{p-1})^{\epsilon}=|\chi_{t}^{g}|\sgn(\xi)(\sgn(\xi)|\xi|^{p-1})^{\epsilon}$,
we apply Fatou's lemma for the second term on the left hand side and
the dominated convergence theorem for the remaining terms: we get
\begin{align*}
 & \langle|\chi_{t}^{g}|,|\xi|^{p-1}\rangle+(p-1)\int_{[0,t]\times\R^{d}\times\R}|\xi|^{p-2}\,dm^{g}\\
 & \le\langle|\chi_{0}^{g}|,|\xi|^{p-1}\rangle+\int_{0}^{t}\langle|\chi^{g}|,|\div\,g||\xi|^{p-1}\rangle\,dr.
\end{align*}
 Recalling \eqref{eq:rel_u_chi} and \eqref{eq:linfty_for_v} we obtain
\begin{align}
 & \|v^{g}\|_{L^{p}}^{p}+p(p-1)\int_{[0,t]\times\R^{d}\times\R}|\xi|^{p-2}\,dm^{g}\nonumber \\
 & \le\|u_{0}\|_{L^{p}}^{p}+p\int_{0}^{t}\int\int(\div\,g)|\xi|^{p-1}1_{|\xi|\le R_{0}}\,dxd\xi dr\label{eq:det_Lp_est}\\
 & \le\|u_{0}\|_{L^{p}}^{p}+p\|u_{0}\|_{L_{x}^{\infty}}^{p-1}\|\div\,g\|_{L^{1}([0,T]\times\R^{d}\times[-R_{0},R_{0}])}\nonumber 
\end{align}
In particular, taking $p=2$, we see that $\|m\|_{\mathcal{M}_{t,x,\xi}}$
is bounded in terms of $u_{0}$ and $\|\div\,g\|_{L^{1}([0,T]\times\R^{d}\times[-R_{0},R_{0}])}$.

In the case $p=1$, proceeding as before we get
\begin{align*}
 & \|v^{g}\|_{L^{1}}+\liminf_{\epsilon\rightarrow0}\int_{[0,t]\times\R^{d}\times\R}\frac{d}{d\xi}(\sgn(\xi))^{\epsilon}\psi_{1/\epsilon}(x)\,dm^{g}\\
 & \le\|u_{0}\|_{L_{x}^{1}}+\|\div\,g\|_{L^{1}([0,T]\times\R^{d}\times[-R_{0},R_{0}])}.
\end{align*}
In particular, recalling again \eqref{eq:rel_u_chi}, this gives the
global $L_{x,\xi}^{1}$ bound
\begin{equation}
\sup_{t\in[0,T]}\|\chi^{g}(t)\|_{L_{x,\xi}^{1}}\le\|u_{0}\|_{L_{x}^{1}}+\|\div\,g\|_{L^{1}([0,T]\times\R^{d}\times[-R_{0},R_{0}])}.\label{eq:det_L1_est}
\end{equation}

\textit{Step 2:} We apply the previous results to $g=\tilde{b}^{\omega}=b(x+W_{t}(\o),u)$
and, by Proposition \ref{prop:transformation}, get the existence
of an entropy solution with the desired estimates. The technical details
are not difficult but not immediate, since we have to pass from a
process with values in a space of functions of $x$ to a measurable
function of $(t,\omega,x)$:

1) The map $(t,\omega)\mapsto\bar{u}(t,\omega)=v_{t}^{\tilde{b}^{\omega}}$
is measurable bounded from $\mathcal{B}([0,T])\otimes\mathcal{F}_{T}$
to $\mathcal{B}(L_{x}^{1})$, for every $T$, since it is the composition
of the measurable map $(t,\omega)\mapsto(t,\tilde{b}^{\omega})$ from
$\mathcal{B}([0,T])\otimes\mathcal{F}_{T}$ to $\mathcal{B}([0,T])\otimes\mathcal{B}(C([0,T];C_{b}^{3}(\R^{d+1})))$
and the continuous map $(t,g^{\omega})\mapsto v_{t}^{g}$. Moreover
the $L_{x}^{1}$-valued process $\bar{u}$ has time-continuous paths.
So $\bar{u}$ is actually measurable bounded from $\mathcal{P}$ (the
progressive $\sigma$-algebra) to $\mathcal{B}(L_{x}^{1})$; in particular,
it is weakly measurable with respect to $\mathcal{P}$. Therefore,
by Proposition \ref{prop:Lp_two_var}, there exists $\td u$ in $L^{1}([0,T]\times\Omega\times\R^{d},\mathcal{P}\otimes\mathcal{B}(\R^{d}))$
version of $\bar{u}$ (in the sense that, for a.e. $(t,\omega)$,
$\bar{u}(t,\omega)=\td u(t,\omega)$). By the $L^{\infty}$ bounds,
$\td u$ is in $L_{t,\omega,x}^{\infty}$.

Similarly the map $(t,\omega)\mapsto\bar{\chi}(t,\omega)=\chi_{t}^{\tilde{b}^{\omega}}$
is measurable bounded from $\mathcal{B}([0,T])\otimes\mathcal{F}_{T}$
to $\mathcal{B}(L_{x,\xi}^{1})$, for every $T$, and time-continuous
and thus weakly measurable with respect to $\mathcal{P}$. Setting
$\td\chi=1_{\td u<\xi}-1_{0<\xi}$, $\td\chi$ is a $\mathcal{P}\otimes\mathcal{B}(\R^{d})\otimes\mathcal{B}(\R)$-measurable
version of $\bar{\chi}$, it is in $L^{1}$ and supported on $[-R_{0},R_{0}]$.

2) The map $\omega\mapsto\bar{m}(\omega)=m^{\tilde{b}^{\omega}}$
is bounded as an $\mathcal{M}_{t,x,\xi}$-valued function, nonnegative
and supported on $[0,T]\times\R^{d}\times[-R_{0},R_{0}]$. Concerning
progressive measurability, for $\psi$ in $C_{c}^{\infty}([0,T]\times\R^{d}\times\R)$,
call $\varphi$ a primitive function of $\psi$. Then $\int_{[0,T]\times\R^{d}\times\R}\psi\,d\bar{m}=\int\partial_{\xi}\varphi\,d\bar{m}$
is $\mathcal{F}_{T}$-measurable for every $T$ by equation \eqref{eq:kinetic_approx}
with $g=\tilde{b}^{\omega}$ (where the right-hand side is $\mathcal{F}_{T}$-measurable),
moreover it has càdlàg paths; hence it is progressively measurable.
By equation \eqref{eq:kinetic_approx}, $\td\chi$ and $\bar{m}$
satisfy equation \eqref{eq:transformed_kinetic}.

3) We call $u(t,\omega,x)=\td u(t,\omega,x-W_{t}(\omega))$, $\chi(t,\omega,x,\xi)=\td\chi(t,\omega,x-W_{t}(\omega),\xi)$
and $m(t,\omega,x,\xi)=\bar{m}(t,\omega,x-W_{t}(\omega),\xi)$: more
precisely, for $\omega$ fixed, we define $m(\omega)$ as the image
measure of $\bar{m}(\omega)$ under $(t,x,\xi)\mapsto(t,x+W_{t}(\omega),\xi)$.
Then, by Proposition \ref{prop:transformation}, $u$ and $\chi$
are also $L_{x,\xi}^{\infty}$-valued weakly-{*} progressively measurable,
$m$ is a kinetic measure and $u$ is an entropy solution of \eqref{eq:intro-stoch-burgers-inhomo},
with kinetic function $\chi$ and kinetic measure $m$.

4) Changing variable $x'=x-W_{t}$ in \eqref{eq:det_Lp_est} and in
\eqref{eq:det_L1_est}, we get the estimates \eqref{eq:apriori_linfty_bound}
and \eqref{eq:apriori_lp_bounds}.
\end{proof}

\subsection{\label{sec:generalized_construction}Existence of generalized kinetic
solutions }

We introduce the notion of a generalized kinetic solution.
\begin{defn}
\label{def:path_e-soln-1}Let $f_{0}\in(L^{1}\cap L^{\infty})(\R^{d}\times\R)$.
A generalized kinetic solution to \eqref{eq:intro-stoch-burgers-inhomo}
is a measurable function $f:[0,T]\times\Omega\times\R^{d}\times\R\rightarrow\R$
with the following properties:

\begin{enumerate}
\item $f\in L^{\infty}([0,T]\times\Omega;L^{1}(\R^{d}\times\R))$ and is
supported on $[0,T]\times\Omega\times\R^{d}\times[-R,R]$ for some
$R>0$;
\item $f$ is a weakly-{*} progressively measurable $L_{x,\xi}^{\infty}$-valued
process;
\item there exists a kinetic bounded measure $m$ on $[0,T]\times\Omega\times\R^{d}\times\R$
satisfying: for every $\varphi\in C_{c}^{\infty}([0,T]\times\R^{d}\times\R)$,
it holds for a.e.~$(t,\omega)$, 
\begin{align}
\langle f_{t},\varphi_{t}\rangle= & \langle f_{0},\varphi_{0}\rangle+\int_{0}^{t}\langle f,\partial_{t}\varphi+\div(b(x,\xi)\varphi)\rangle\,dr+\int_{0}^{t}\langle f,\nabla\varphi\rangle\,dW\label{eq:kinetic_measure}\\
 & +\frac{1}{2}\int_{0}^{t}\langle f,\Delta\varphi\rangle\,dr-\int_{[0,t]\times\R^{d}\times\R}\partial_{\xi}\varphi\,dm.\nonumber 
\end{align}
\item there exists a kinetic bounded measure $\nu$ on $[0,T]\times\Omega\times\R^{d}\times\R$,
which moreover is in $L^{\infty}([0,T]\times\Omega,\mathcal{M}(\R^{d}\times\R))$,
satistying: for every $\varphi\in C_{c}^{\infty}([0,T]\times\R^{d}\times\R)$,
it holds for a.e.~$(t,\omega)$, 
\begin{align}
|f(t,x,\xi)| & =\sgn(\xi)f(t,x,\xi)\le1,\quad\text{for a.e. }(x,\xi),\label{eq:gen_kinetic_measure}\\
\langle f_{t},-\partial_{\xi}\varphi_{t}\rangle & =\int\varphi(t,x,0)\,dx-\int_{\R^{d}\times\R}\varphi_{t}\,d\nu_{t}.\nonumber 
\end{align}
\end{enumerate}
\end{defn}

A formal, short-hand notation for \eqref{eq:kinetic_measure} and
\eqref{eq:gen_kinetic_measure} is
\begin{align*}
 & \partial_{t}f+b(x,\xi)\cdot\nabla f+\nabla f\circ dW_{t}=\partial_{\xi}m,
\end{align*}
and
\begin{align*}
|f|(t,x,\xi) & =\sgn(\xi)f(t,x,\xi)\le1,\\
\frac{\partial f}{\partial\xi} & =\d(\xi)-\nu(t,x,\xi).
\end{align*}

\begin{rem}
Kinetic solutions are a particular type of generalized kinetic solutions.
Indeed, if $f_{0}(x,\xi):=\chi(u_{0}(x),\xi)$ and $\chi$ is a kinetic
solution to \eqref{eq:intro-stoch-burgers-inhomo}, with associated
kinetic measure $m$, then $\chi$ is also a generalized solution
with kinetic measure $m$ and $\nu=\delta_{\xi=u(t,\omega,x)}$.
\end{rem}

The following theorem asserts the existence of a generalized kinetic
solution.
\begin{thm}
\label{thm:generalized_existence}Let $b\in L_{loc}^{1}(\R^{d+1})$
with $\div\,b$ in $L_{\xi,loc}^{1}(L_{x}^{1})$ and $u_{0}\in(L^{1}\cap L^{\infty})(\R^{d})$.
Then there exists a generalized kinetic solution $f$ to \eqref{eq:intro-stoch-burgers-inhomo}
starting from $f_{0}(x,\xi):=\chi(u_{0}(x),\xi)$.
\end{thm}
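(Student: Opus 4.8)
The plan is to construct $f$ by approximation, using the a priori bounds of Proposition~\ref{prop:existence_smooth_b} together with a weak-$*$ compactness argument. First I would approximate the data: choose $u_0^n\in C_c^\infty(\R^d)$ with $\|u_0^n\|_{L^\infty}\le\|u_0\|_{L^\infty}$ and $u_0^n\to u_0$ in $L^1(\R^d)$, and choose $b^n\in C_c^\infty(\R^{d+1})$ with $b^n\to b$ in $L_{loc}^1(\R^{d+1})$ and, crucially, $\div b^n\to\div b$ in $L^1(\R^d\times[-R,R])$ for every $R>0$. The second requirement is the delicate point in the construction: pure mollification $b*\rho_\epsilon$ enjoys the clean identity $\div(b*\rho_\epsilon)=(\div b)*\rho_\epsilon$, but the spatial cut-off needed to make $b^n$ compactly supported produces the extra term $(b*\rho_\epsilon)\cdot\nabla\theta_n$; one arranges the cut-off $\theta_n$ so that this term is negligible in $L^1(\R^d\times[-R,R])$, and cuts off in $\xi$ only outside $[-R-1,R+1]$, so that the $\xi$-cut-off contribution does not enter the relevant slab. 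Set $R_0:=\|u_0\|_{L^\infty}$, so that all the approximate solutions below are supported in $\{|\xi|\le R_0\}$.

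For each $n$, Proposition~\ref{prop:existence_smooth_b} provides an entropy solution $u^n$, with kinetic function $\chi^n=\chi(u^n)$, kinetic measure $m^n$ and $\nu^n=\delta_{\xi=u^n}$, satisfying \eqref{eq:kinetic_measure} and \eqref{eq:gen_kinetic_measure} with $b,u_0,f_0$ replaced by $b^n,u_0^n,\chi(u_0^n)$. The a priori estimates \eqref{eq:apriori_linfty_bound}--\eqref{eq:apriori_lp_bounds} (with $p=2$) and the uniform control of $\|\div b^n\|_{L^1(\R^d\times[-R_0,R_0])}$ then give bounds, uniform in $n$ and in $\omega$: $|\chi^n|\le1$ with $\chi^n$ bounded in $L^\infty([0,T]\times\Omega;(L^1\cap L^\infty)(\R^d\times\R))$ and supported in $\{|\xi|\le R_0\}$; $m^n$ bounded in $L^\infty(\Omega;\mathcal M)$ and supported in $\{|\xi|\le R_0\}$; and $\nu^n$ bounded in the corresponding sense. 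Banach--Alaoglu then yields a subsequence along which $\chi^n\rightharpoonup f$ weakly-$*$, $m^n\rightharpoonup m$ and $\nu^n\rightharpoonup\nu$.

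The remaining work is to pass to the limit in \eqref{eq:kinetic_measure}, \eqref{eq:gen_kinetic_measure} and to verify the structural and measurability properties of $f,m,\nu$. All terms linear in $\chi^n$ tested against a fixed deterministic $\varphi$ pass by weak-$*$ convergence; the measure term $\int\partial_\xi\varphi\,dm^n$ and the term $\int\varphi\,d\nu^n_t$ pass by weak-$*$ convergence of the measures. The stochastic integral $\int_0^t\langle\chi^n,\nabla\varphi\rangle\,dW$ passes to the limit because $\langle\chi^n,\nabla\varphi\rangle$ is predictable, uniformly bounded, and converges weakly in $L^2([0,T]\times\Omega)$ to $\langle f,\nabla\varphi\rangle$, while the It\^o map is a bounded linear isometry and hence continuous for the weak $L^2$-topology on predictable integrands. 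The drift term $\langle\chi^n,\div(b^n\varphi)\rangle=\langle\chi^n,\varphi\,\div b^n+b^n\cdot\nabla\varphi\rangle$ is handled by splitting off $\div(b\varphi)$: since $\div(b^n\varphi)\to\div(b\varphi)$ strongly in $L^1$ on the slab $\{|\xi|\le R_0\}$ and $\chi^n$ is bounded in $L^\infty$, the strong-times-weak product converges to $\langle f,\div(b\varphi)\rangle$. The constraint $0\le\sgn(\xi)f\le1$ passes to the limit because the set $\{g:0\le\sgn(\xi)g\le1\}$ is convex and weak-$*$ closed. Finally, weak-$*$ progressive measurability of $f$ (and the analogous measurability of $m,\nu$) is preserved by a Mazur/Banach--Saks argument: convex combinations of the $\chi^n$ can be chosen to converge strongly, hence a.e.\ along a subsequence, and convex combinations and a.e.\ limits of progressively measurable processes remain progressively measurable; the precise measurability statements are those recorded in the Appendix.

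I expect the main obstacle to be twofold. First, the passage to the limit in the drift term forces the approximation of $b$ to be chosen so that $\div b^n$ converges \emph{strongly} in $L^1(\R^d\times[-R_0,R_0])$, which constrains how the spatial cut-off interacts with the assumption $\div b\in L_{\xi,loc}^1(L_x^1)$ and is the reason the approximation scheme must be set up with care. Second, and more structurally, the weak-$*$ limit $f$ is in general \emph{not} of the form $\chi(u)$ for a function $u$: the relation $\partial_\xi f=\delta_0-\nu$ survives only with $\nu$ a nonnegative measure rather than a Dirac concentrated on a graph, which is exactly why the weaker notion of generalized kinetic solution is needed at this stage and why the identification $f=\chi(u)$ (and uniqueness) is deferred to the subsequent sections.
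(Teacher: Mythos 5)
Your proposal is correct and follows essentially the same route as the paper: smooth approximation of $b$ and $u_0$, the a priori bounds of Proposition~\ref{prop:existence_smooth_b}, weak-$*$ compactness for $\chi^n$, $m^n$, $\nu^n$, and passage to the limit in the equation and in the sign/derivative constraints. The only (immaterial) variations are in bookkeeping: the paper transfers the uniform $L^\infty_{t,\omega}(L^1_{x,\xi})$ bound to $f$ by splitting $\chi^n$ into positive and negative parts and using the measure-valued duality of Theorem~\ref{thm:dual_Lp}, handles the stochastic integral by testing against bounded $G(t,\omega)$ rather than invoking weak continuity of the It\^o isometry directly, and preserves progressive measurability by the weak-$*$ limit of the pairings rather than a Mazur argument.
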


\begin{proof}
\emph{Step 1: }\textit{\emph{Approximation of $f$ and convergence}}\textit{.}

We introduce smooth approximations: $b^{\ve}\in C_{c}^{\infty}(\R^{d+1})$
of $b$ with $b^{\ve}\to b$ in $L_{x,\xi,loc}^{1}$ and $\div\,b^{\ve}\rightarrow\div\,b$
in $L_{\xi,loc}^{1}(L_{x}^{1})$; $u_{0}^{\ve}\in C_{c}^{\infty}(\R^{d})$
of $u_{0}$ with $\|u_{0}^{\ve}\|_{L^{p}}\le\|u_{0}\|_{L^{p}}$ for
all $p\ge1$ and $u_{0}^{\ve}\to u_{0}$ in $L_{x}^{1}$. We consider
the corresponding unique entropy solution $u^{\ve}$ (see Proposition
\eqref{prop:existence_smooth_b}) to 
\[
\partial_{t}u^{\ve}(t,x)+b^{\ve}(x,u^{\ve}(t,x))\cdot\nabla u^{\ve}(t,x)+\nabla u^{\ve}(t,x)\circ dW_{t}=0;
\]
that is $\chi^{\ve}=\chi(u^{\ve})$ solves
\begin{align}
\partial_{t}\chi^{\ve} & +b^{\ve}(x,\xi)\cdot\nabla\chi^{\ve}+\nabla\chi^{\ve}\circ dW_{t}=\partial_{\xi}m^{\ve}.\label{eq:kinetic_eps}
\end{align}
Since $|\chi^{\ve}|\le1$ and $\chi^{\ve}$ are $\mathcal{P}\otimes\mathcal{B}(\R^{d+1})$-measurable,
the sequence $\chi^{\ve}$ converges weakly-{*}, up to taking a subsequence,
to a limit $f$ in $L^{\infty}([0,T]\times\Omega\times\R^{d}\times\R,\mathcal{P}\otimes\mathcal{B}(\R^{d+1}))$.
In particular, $f$ is weakly-{*} progressively measurable as an $L_{x,\xi}^{\infty}$-valued
process.

Note that the sequence $\chi^{\ve}$, as $\mathcal{B}([0,T])\otimes\mathcal{\mathcal{A}}\otimes\mathcal{B}(\R^{d+1})$-measurable
processes, is also weakly-{*} compact in $L^{\infty}([0,T]\times\Omega\times\R^{d}\times\R,\mathcal{B}([0,T])\otimes\mathcal{\mathcal{A}}\otimes\mathcal{B}(\R^{d+1}))$.
Therefore, up to taking a sub-subsequence, we can assume that $\chi^{\ve}$
converges weakly-{*} to $f$ also in $L^{\infty}([0,T]\times\Omega\times\R^{d}\times\R,\mathcal{B}([0,T])\otimes\mathcal{\mathcal{A}}\otimes\mathcal{B}(\R^{d+1}))$.
In particular, we can allow test functions of the form $F(\omega)\varphi(t,x,\xi)$.

\emph{Step 2: }\textit{\emph{Bounds and support of $f$.}}

Using Proposition \ref{prop:existence_smooth_b} and $\div\,b\in L_{\xi,loc}^{1}(L_{x}^{1})$,
we obtain that $(\chi^{\epsilon})^{+}=\chi^{\epsilon}\vee0$ is uniformly
bounded in $L_{t,\omega}^{\infty}(L_{x,\xi}^{1})$. Therefore, identifying
$\chi^{\epsilon}(x,\xi)$ with $\chi^{\epsilon}(x,\xi)\,dxd\xi$,
$(\chi^{\epsilon})^{+}$ is uniformly bounded in $L_{t,\omega}^{\infty}(\mathcal{M}_{x,\xi,+})$.
By Theorem \ref{thm:dual_Lp}, up to the selection of a subsequence,
$(\chi^{\epsilon})^{+}$ converges weakly-{*} in $L_{t,\omega}^{\infty}(\mathcal{M}_{x,\xi,+})$
to an element $g^{+}\in L_{t,\omega}^{\infty}(\mathcal{M}_{x,\xi,+})$.
Similarly $(\chi^{\epsilon})^{-}=(-\chi^{\epsilon})\vee0$ converges
weakly-{*}, up to the selection of a subsequence, to an element $g^{-}\in L_{t,\omega}^{\infty}(\mathcal{M}_{x,\xi,+})$.
Moreover, we can take the same subsequence for the weakly-{*} convergence
of $\chi^{\epsilon}$ in $L_{t,\omega,x,\xi}^{\infty}$ and of $(\chi^{\epsilon})^{+}$
and $(\chi^{\epsilon})^{-}$ in $L_{t,\omega}^{\infty}(\mathcal{M}_{x,\xi,+})$.
By a density argument (on the test functions), we see that $g:=g^{+}-g^{-}=f$.
In particular, 
\begin{align*}
\|f\|_{L_{t,\omega}^{\infty}(L_{x,\xi}^{1})}=\|g\|_{L_{t,\omega}^{\infty}(\mathcal{M}_{x,\xi})} & \le\|g^{+}\|_{L_{t,\omega}^{\infty}(\mathcal{M}_{x,\xi})}+\|g^{-}\|_{L_{t,\omega}^{\infty}(\mathcal{M}_{x,\xi})}\\
 & \le2\sup_{\epsilon}\|\chi^{\epsilon}\|_{L_{t,\omega}^{\infty}(L_{x,\xi}^{1})}.
\end{align*}

For the support property of $f$, again by Proposition \ref{prop:existence_smooth_b}
the functions $\chi^{\epsilon}$ are concentrated a.s.~on $[0,T]\times\R^{d}\times[-\|u_{0}\|_{\infty},\|u_{0}\|_{\infty}|]$.
Therefore,
\begin{align*}
 & \E[F\langle\chi^{\epsilon},\varphi\rangle_{t,x,\xi}]=0
\end{align*}
for every $\varphi$ in $L^{1}([0,T]\times\R^{d}\times\R)$ with support
outside $[0,T]\times\R^{d}\times[-\|u_{0}\|_{\infty},\|u_{0}\|_{\infty}]$
and every $F$ in $L_{\omega}^{1}$. Passing to the limit in the above
equality, we conclude that $f$ is concentrated a.s.~on $[0,T]\times\R^{d}\times[-\|u_{0}\|_{\infty},\|u_{0}\|_{\infty}]$.

\emph{Step 3: }\textit{\emph{Convergence of $m^{\epsilon}$}}\textit{.}

By Proposition \ref{prop:existence_smooth_b} (applied with $p=2$),
$m^{\ve}$ is a bounded sequence in the space $L^{\infty}(\Omega;\mathcal{M}_{+}([0,T]\times\R^{d}\times\R))$.
Therefore, by Theorem \ref{thm:dual_Lp}, it converges weakly-{*},
up to subsequences, to a limit $m$ in $L^{\infty}(\Omega;\mathcal{M}_{+}([0,T]\times\R^{d}\times\R))$.
The support property of $m$ follows from Proposition \ref{prop:existence_smooth_b}
as for $f$, replacing $L^{1}([0,T]\times\R^{d}\times\R)$ with $C_{0}([0,T]\times\R^{d}\times\R)$.

\textit{\emph{Concerning progessive measurability of $m$, we have
to prove that, for every $\varphi$ in $C_{c}^{\infty}([0,T]\times\R^{d}\times\R)$,
the process $(\langle m,1_{[0,t]}\varphi\rangle)_{t}$ is $\mathcal{P}$-measurable.
By right-continuity of the process and of the filtration, it is enough
to show that, for every $t_{0}$, for every positive integer $n,$
the random variable $\langle m,\varphi^{t_{0},n}\rangle$ is $\mathcal{F}_{t_{0}+1/n}$-measurable,
where $\varphi^{t_{0},n}=\varphi1_{[0,t_{0}]}*_{t}1_{[0,1/n]}$. The
functions $\langle m^{\epsilon},\varphi^{t_{0},n}\rangle$ are $\mathcal{F}_{t_{0}+1/n}$-measurable
and, by continuity of $\varphi^{t_{0},n}$, converge to $\langle m,\varphi^{t_{0},n}\rangle$
weakly-{*} in $L_{\omega}^{\infty}$, in particular weakly in $L_{\omega}^{2}$.
Now the space of $\mathcal{F}_{t_{0}+1/n}$}}-measurable $L_{\omega}^{2}$
functions is (isomorphic to) a closed (and thus weakly closed) subspace
of $L_{\omega}^{2}$. Hence, $\langle m,\varphi^{t_{0},n}\rangle$
is \textit{\emph{$\mathcal{F}_{t_{0}+1/n}$}}-measurable (precisely,
$\mathcal{F}_{t_{0}+1/n}$-measurable up to $P$-null sets which implies
\textit{\emph{$\mathcal{F}_{t_{0}+1/n}$}}-measurability by completeness
of $\mathcal{F}_{0}$). This shows that $m$ is a kinetic measure.

\emph{Step 4:}\textit{\emph{ Equation \eqref{eq:kinetic_measure}}}\textit{.}

Equation \eqref{eq:kinetic_measure} is obtained passing to the limit
in \eqref{eq:kinetic_eps} for $\varphi$ in $C_{c}^{\infty}([0,T]\times\R^{d}\times\R)$,
exploiting the linearity of the equation. More precisely, we multiply
\eqref{eq:kinetic_eps} by a measurable bounded function $G=G(t,\omega)$,
we integrate in $t$ and $\omega$; we can then pass to the limit
as $\epsilon\rightarrow0$, thanks to the weak-{*} convergence of
$\chi^{\epsilon}$ and $m^{\epsilon}$ and the fact that $b\cdot\nabla\varphi$
and $\varphi\div\,b$ are in $L_{t,\omega,x,\xi}^{1}$. By arbitrariness
of $G$ we get \eqref{eq:kinetic_measure}.

\emph{Step 5: }\textit{\emph{Properties \eqref{eq:gen_kinetic_measure}}}\textit{.}

The bound $\|f\|_{L_{t,\omega,x,\xi}^{\infty}}\le1$ follows from
the same bound for $\chi^{\epsilon}$. For the property $|f|=\sgn(\xi)f$,
we notice that $\E[\langle\chi^{\epsilon},\sgn(\xi)G\rangle_{t,x,\xi}]\ge0$
for every $G$ nonnegative function in $L^{1}([0,T]\times\Omega\times\R^{d}\times\R)$
and we pass to the limit as $\epsilon\rightarrow0$, getting $|f|=\sgn(\xi)f$.

Further, for $\nu^{\ve}$, we have for a.e.~$(t,\omega)$,
\begin{align}
\langle\chi_{t}^{\ve},-\partial_{\xi}\varphi_{t}\rangle & =\int_{\R^{d}}\varphi(t,x,0)\,dx-\int_{\R^{d}\times\R}\varphi_{t}\,d\nu_{t}^{\ve},\label{eq:gen_kinetic_eps}
\end{align}
where $d\nu^{\ve}=\delta_{\xi=u^{\ve}(t,x)}dxdt$. In particular,
$\nu^{\ve}$ is a bounded sequence in $L^{\infty}(\Omega\times[0,T];\mathcal{M}(\R^{d}\times\R))$
of kinetic measures. Proceeding as for $m^{\epsilon}$, we get that
$\nu^{\ve}$ converges weakly-{*}, up to subsequences, to a bounded
kinetic measure $\nu$. We then pass to the limit in \eqref{eq:gen_kinetic_eps}
in a way similar to the proof of equation \emph{\eqref{eq:kinetic_measure}}
and we obtain \eqref{eq:gen_kinetic_measure}.
\end{proof}
\begin{rem}
\label{rmk:enlarged_test}For any generalized kinetic solution $f$,
which by definition is in $L^{\infty}(\Omega\times[0,T];L^{1}(\R^{d}\times\R))\cap L^{\infty}(\Omega\times[0,T];L^{\infty}(\R^{d}\times\R))$,
we have by interpolation $f\in L^{\infty}(\Omega\times[0,T];L^{p}(\R^{d}\times\R))$
for every $1\le p\le\infty$. Moreover the global $L_{x,\xi}^{1}$
bound allows to consider also bounded test functions, independent
of $\xi$, which are in $L^{\infty}([0,T];W^{2,\infty}(\R^{d}))\cap L^{\infty}(\R^{d};W^{1,\infty}([0,T]))$.
\end{rem}

The following lemma will be useful in the next section.
\begin{lem}
\label{lem:version_conv} Let $f$ be a generalized kinetic solution
to equation \eqref{eq:intro-stoch-burgers-inhomo}. For every test
function $\psi$ in $C_{c}^{\infty}(\R^{d}\times\R)$, there exist
measurable functions $f(\psi)^{+}$, $f(\psi)^{-}$ on $[0,T]\times\Omega\times\R^{d}\times\R$,
such that:

\begin{enumerate}
\item $f(\psi)^{+}$, $f(\psi)^{-}$ are versions of $f*_{x,\xi}\psi$ (that
is, for every $(x,\xi)$, $f(\psi)^{+}(x,\xi)$ and $f(\psi)^{-}(x,\xi)$
coincide with \textup{$f*_{x,\xi}\psi$} on a full-measure set in
$[0,T]\times\Omega$, possibly depending on $(x,\xi)$ and $\psi$);
\item for every $(x,\xi)$, $f(\psi)^{+}(x,\xi)$, $f(\psi)^{-}(x,\xi)$
are progressively measurable processes;
\item for a.e. $\omega$ it holds: for every $(x,\xi)$, $f(\psi)^{+}(x,\xi)$
is càdlàg, $f(\psi)^{-}(x,\xi)$ is càglàd;
\item for a.e. $\omega$ it holds: for every $t$, $f(\psi)^{+}$ is $C_{x,\xi}^{1}$
and \textup{$\nabla_{x,\xi}f(\psi)^{+}=f(\nabla_{x,\xi}\psi)^{+}$}
and similarly for $f(\psi)^{-}$.\textup{ }
\end{enumerate}
\end{lem}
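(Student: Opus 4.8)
The plan is to read off $f*_{x,\xi}\psi$ directly from the kinetic equation \eqref{eq:kinetic_measure}. For a fixed $(x,\xi)$ the convolution evaluated at that point is exactly a pairing against a time-independent test function, $(f_t*_{x,\xi}\psi)(x,\xi)=\langle f_t,\psi(x-\cdot,\xi-\cdot)\rangle$, and $\psi(x-\cdot,\xi-\cdot)\in C_c^\infty(\R^d\times\R)$. Applying \eqref{eq:kinetic_measure} with $\varphi=\psi(x-\cdot,\xi-\cdot)$ (so $\partial_t\varphi=0$) gives the semimartingale decomposition
\begin{align*}
\langle f_t,\psi(x-\cdot,\xi-\cdot)\rangle
&=\langle f_0,\psi(x-\cdot,\xi-\cdot)\rangle
+\int_0^t\big\langle f,\div\big(b\,\psi(x-\cdot,\xi-\cdot)\big)+\tfrac12\Delta\psi(x-\cdot,\xi-\cdot)\big\rangle\,dr\\
&\quad+\int_0^t\langle f,\nabla\psi(x-\cdot,\xi-\cdot)\rangle\,dW
-\int_{[0,t]\times\R^d\times\R}\partial_\xi\big(\psi(x-\cdot,\xi-\cdot)\big)\,dm.
\end{align*}
I would define $f(\psi)^+$ (resp.\ $f(\psi)^-$) as this right-hand side with the closed interval $[0,t]$ (resp.\ the half-open $[0,t)$) in the last term; the other three pieces are continuous in $t$ and common to both versions.

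Next I would construct the regular part $S$, namely the sum of the initial, drift and martingale terms, as a field that is jointly continuous in $(t,x,\xi)$ and, for each fixed $(t,\omega)$, is $C^1$ in $(x,\xi)$. The initial term $\langle f_0,\psi(x-\cdot,\xi-\cdot)\rangle$ is deterministic and smooth. For the drift term I differentiate under the integral sign: since $f$ is bounded with compact $\xi$-support, $b,\div b\in L^1_{loc}$, and $\psi\in C_c^\infty$, the dominated convergence theorem makes the integrand $C^\infty$ in $(x,\xi)$, with $\nabla_{x,\xi}$ simply replacing $\psi$ by $\nabla\psi$, and uniformly bounded in $r$, so the $dr$-integral is Lipschitz in $t$ and jointly measurable. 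For the martingale term I invoke Kolmogorov's continuity criterion: using the Burkholder--Davis--Gundy inequality and the smoothness of $\psi$ one bounds the moments of the $(x,\xi)$-increments of $\int_0^\cdot\langle f,\nabla\psi(x-\cdot,\xi-\cdot)\rangle\,dW$ by powers of $|(x,\xi)-(x',\xi')|$, producing a version jointly continuous in $(t,x,\xi)$; applying the same estimate to $\int_0^\cdot\langle f,\nabla\nabla\psi(x-\cdot,\xi-\cdot)\rangle\,dW$ together with a fundamental-theorem-of-calculus argument identifies this field as $\nabla_{x,\xi}$ of the martingale term. Thus $S$ has the desired regularity and $\nabla_{x,\xi}S$ is the analogous object built from $\nabla\psi$.

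The singular measure term is handled by Remark \ref{rem:cadlag}(ii), taken with parameter $z=(x,\xi)\in\R^{d+1}$ and integrand $(r,\omega,y,\zeta,z)\mapsto\partial_\zeta\big(\psi(x-y,\xi-\zeta)\big)$, where $(y,\zeta)$ are the integration variables of $m$; this integrand is bounded, progressively measurable for each $z$, and jointly continuous in the remaining variables for a.e.\ $\omega$. That remark directly yields the closed- and half-open-interval integrals $N^+,N^-$, each progressively measurable for fixed $z$, c\`adl\`ag resp.\ c\`agl\`ad in $t$ for a.e.\ $\omega$ with null set independent of $z$, and continuous in $z$ for a.e.\ $\omega$ with null set independent of $t$. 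Applying the remark also to the $z$-derivatives of the integrand and combining by the fundamental theorem of calculus upgrades continuity in $z$ to $C^1$, with $\nabla_z N^\pm$ equal to the corresponding object for $\nabla\psi$. Setting $f(\psi)^\pm:=S-N^\pm$ and intersecting the relevant full-measure sets of $\omega$ then yields properties (1)--(3), the identity $\nabla_{x,\xi}f(\psi)^\pm=f(\nabla_{x,\xi}\psi)^\pm$ following from the two gradient computations; that $f(\psi)^\pm$ is a version of $f*_{x,\xi}\psi$ follows because, for each fixed $(x,\xi)$, the displayed identity holds for a.e.\ $(t,\omega)$ and $N^+=N^-$ off the at most countable set of jump times.

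The main obstacle is the joint regularity, i.e.\ controlling the exceptional null sets uniformly in the parameter $(x,\xi)$. For the martingale this is precisely what the Kolmogorov estimate delivers, and for the singular $BV$ term it is exactly the content of Remark \ref{rem:cadlag}(ii); in both cases the delicate point is to upgrade mere continuity in $(x,\xi)$ to $C^1$ regularity while preserving the commutation with $\nabla_{x,\xi}$, which I would secure by running the Kolmogorov/Remark~\ref{rem:cadlag}(ii) machinery simultaneously for $\psi$ and for its $(x,\xi)$-derivatives.
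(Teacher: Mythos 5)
Your proposal is correct and follows essentially the same route as the paper: plug the translated test function $\psi(x-\cdot,\xi-\cdot)$ into \eqref{eq:kinetic_measure}, split the resulting semimartingale into its continuous part (initial datum, drift, Laplacian, stochastic integral) and the measure term with closed resp.\ half-open time interval handled by Remark \ref{rem:cadlag}(ii), and set $f(\psi)^{\pm}$ equal to the sum. The only (immaterial) difference is that you build the jointly regular version of the stochastic-integral term via Kolmogorov/BDG estimates, whereas the paper invokes the stochastic Fubini theorem for that piece.
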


The above lemma is similar to Remark \ref{rem:cadlag} but with the
additional property (iv). The existence of such versions is needed
when dealing with terms of the form $\int\partial_{\xi}f(\psi)^{+}\,dm$,
since for these terms both the precise version in time and the differentiability
in $\xi$ are needed. Note that properties (ii) and (iv) imply $\mathcal{P}\otimes\mathcal{B}(\R^{d+1})$-measurability.
\begin{proof}
[Proof of Lemma \ref{lem:version_conv}] We call $\varphi^{x,\xi}(y,\zeta)=\psi(x-y,\xi-\zeta)$.
We know that, for every $(x,\xi)$, it holds for a.e. $(t,\omega)$,
\begin{align}
\langle f_{t},\varphi_{t}^{x,\xi}\rangle & =\langle f_{0},\varphi_{0}^{x,\xi}\rangle+\int_{0}^{t}\langle f,\partial_{t}\varphi^{x,\xi}+\div_{y}(b\varphi^{x,\xi})\rangle\,dr+\int_{0}^{t}\langle f,\nabla_{y}\varphi^{x,\xi}\rangle\,dW\nonumber \\
 & \quad+\frac{1}{2}\int_{0}^{t}\langle f,\Delta_{y}\varphi^{x,\xi}\rangle\,dr-\int_{[0,t]\times\R^{d}\times\R}\partial_{\zeta}\varphi^{x,\xi}\,dm.\label{eq:conv_version}
\end{align}

For the integrals $\int_{0}^{t}\langle f,\partial_{t}\varphi^{x,\xi}+\div_{y}(b\varphi^{x,\xi})\rangle\,dr$,
$\frac{1}{2}\int_{0}^{t}\langle f,\Delta_{y}\varphi^{x,\xi}\rangle\,dr$,
there exist resp.\ versions $A(t,\omega,x,\xi)$, $B(t,\omega,x,\xi)$
which satisfy the second and the fourth property above and are continuous
(a.s.) in $(t,x,\xi)$ (these versions are simply the Lebesgue integrals
for a fixed version of $f$). Such a version $C(t,\omega,x,\xi)$
exists also for the stochastic integral $-\int_{0}^{t}\langle f,\nabla_{y}\varphi^{x,\xi}\rangle\,dW$,
by Theorem 10.6 in Kunita \cite[Chapter 1]{Kun1984}. Finally, for
$-\int_{[0,t]\times\R^{d}\times\R}\partial_{\zeta}\varphi^{x,\xi}\,dm$,
by Remark \ref{rem:cadlag} there exist versions $D^{+}(t,\omega,x,\xi)$,
$D^{-}(t,\omega,x,\xi)$ which satisfy the second and the fourth property
above and are resp.\ càdlàg, càglàd for fixed $(x,\xi)$ (these versions
are simply the Lebesgue integrals resp.\ on $[0,t]\times\R^{d}\times\R$
and $[0,t[\times\R^{d}\times\R$ for a fixed version of $f$). Therefore
$f(\psi)^{+}=\langle f_{0},\varphi_{0}^{x,\xi}\rangle+A+B+C+D^{+}$
and $f(\psi)^{-}=\langle f_{0},\varphi_{0}^{x,\xi}\rangle+A+B+C+D^{-}$
are measurable versions of $f*_{x,\xi}\psi$ with the desired properties.
\end{proof}
From now on, when this does not create confusion, the first three
integrals in formula \eqref{eq:conv_version} will denote their continuous
versions. The càdlàg version $D^{+}$of the last integral will be
denoted still by $-\int_{[0,t]\times\R^{d}\times\R}\partial_{\zeta}\varphi^{x,\xi}\,dm$,
while the càglàd version $D^{-}$ by $-\int_{[0,t)\times\R^{d}\times\R}\partial_{\zeta}\varphi^{x,\xi}\,dm$,
coherently with the continuity property in $t$ of the integral on
$[0,t]$.
\begin{rem}
\label{rem:version_eps}Consider $f^{\epsilon,\delta}=f*_{x,\xi}(\rho_{\epsilon}\bar{\rho}_{\delta})$,
where $\rho=\rho(x)$, $\bar{\rho}=\bar{\rho}(\xi)$ are two $C_{c}^{\infty}$
even functions and $\rho_{\epsilon}(x)=\epsilon^{-d}\rho(\epsilon^{-1}x)$,
$\bar{\rho}_{\delta}(\xi)=\delta^{-1}\bar{\rho}(\delta^{-1}\xi)$.
We call $f^{\epsilon,\delta,+}$, $f^{\epsilon,\delta,-}$ the versions
of $f^{\epsilon,\delta}$ as in the previous Lemma. Note that, by
construction, for a.e. $\omega$, it holds for every $(t,x,\xi)$
(with the above convention on the integrals),
\begin{align*}
f_{t}^{\epsilon,\delta,+}(x,\xi) & =f_{0}(x,\xi)+\int_{0}^{t}\frac{1}{2}\Delta f^{\epsilon,\delta}(x,\xi)\,dr-\int_{0}^{t}(b\cdot\nabla f)^{\epsilon,\delta}(x,\xi)\,dr\\
 & -\int_{0}^{t}\nabla f^{\epsilon,\delta}(x,\xi)\,dW_{r}+\int_{[0,t]\times\R^{d}\times\R}\rho^{\epsilon}(x-y)(\bar{\rho}^{\delta})'(\xi-\zeta)m(r,y,\zeta)\,dyd\zeta dr,
\end{align*}
where $(b\cdot\nabla f)^{\epsilon,\delta}=\nabla(\rho^{\epsilon}\bar{\rho}^{\delta})*_{x,\xi}(bf)+(\rho^{\epsilon}\bar{\rho}^{\delta})*_{x,\xi}((\div\,b)f)$.
The integrands $\int_{\R^{d}\times\R}\rho^{\epsilon}(x-y)\bar{\rho}^{\delta}(\xi-\zeta)m(\cdot,y,\zeta)\,dyd\zeta$,
$\int_{\R^{d}\times\R}\rho^{\epsilon}(x-y)(\bar{\rho}^{\delta})'(\xi-\zeta)m(\cdot,y,\zeta)\,dyd\zeta$
will be denoted resp.~by $m^{\epsilon,\delta}(\cdot,x,\xi)$ and
$\partial_{\xi}m^{\epsilon,\delta}(\cdot,x,\xi)$; they are measures
on $[0,T]$ parametrized by $(\omega,x,\xi)$.

Moreover, for every fixed representative of $m$ and every test function
$\psi$, it holds:\\
1) the function $(t,\omega,x,\xi)\mapsto\int_{[0,t]}f(\psi)^{+}(x,\xi)\partial_{\xi}m^{\epsilon,\delta}(r,x,\xi)\,dr$
is measurable in $(t,\omega,x,\xi)$, càdlàg in $t$ and continuous
in $(x,\xi)$ for a.e.~$\omega$;\\
2) for a.e.~$\omega$, we have for every $t\ge0$,
\begin{align*}
 & \int_{\R^{d}\times\R}\int_{[0,t]}f(\psi)^{+}(r,x,\xi)\partial_{\xi}m^{\epsilon,\delta}(r,x,\xi)\,dr\,dxd\xi\\
 & =\int_{[0,t]\times\R^{d}\times\R}\int_{\R^{d}\times\R}f(\psi)^{+}(r,x,\xi)\rho^{\epsilon}(x-y)(\bar{\rho}^{\delta})'(\xi-\zeta)\,dxd\xi\,m(r,y,\zeta)\,dyd\zeta dr\\
 & =-\int_{[0,t]\times\R^{d}\times\R}\partial_{\xi}f(\psi)^{+}(r,x,\xi)m^{\epsilon,\delta}(r,x,\xi)\,dxd\xi dr.
\end{align*}
Indeed, the measurability follows from Remark \ref{rmk:measurability_omega}
below applied at $(x,\xi)$ fixed and from the continuity property
of the integral with respect to $(x,\xi)$ (at $(t,\omega)$ fixed).
The above equality follows from Fubini's theorem, Lemma \ref{lem:version_conv}
and the càdlàg property of the integrals. An analogous property holds
replacing $f(\psi)^{+}$ with $f(\psi)^{+}\varphi$ or $f(\psi)^{-}\varphi$
for regular test functions $\varphi$.
\end{rem}

\section{Well-posedness of entropy solutions\label{sec:Well-posedness-of-entropy}}

In this section we prove the well-posedness by noise result, namely
the existence, uniqueness and stability of entropy solutions:
\begin{thm}
\label{thm:well_posedness}Assume that $b\in L_{\xi,loc}^{\infty}(L_{x}^{\infty})\cap L_{\xi,loc}^{1}(W_{x,loc}^{1,1})$
and that $\div\,b\in L_{\xi,loc}^{1}(L_{x}^{1})\cap L_{x}^{p}(L_{\xi,loc}^{\infty})$
for some $p>d$, $p\le\infty$. For every initial datum $u_{0}$ in
$(L^{1}\cap L^{\infty})(\R^{d})$, there exists a unique entropy solution
$u$ to \eqref{eq:intro-stoch-burgers-inhomo}. Moreover, for every
initial data $u_{0}^{1}$, $u_{0}^{2}$ in $(L^{1}\cap L^{\infty})(\R^{d})$,
the two corresponding entropy solutions $u^{1},$ $u^{2}$ satisfy
\begin{align*}
\E\int|u_{t}^{1}-u_{t}^{2}|\,dx & \le C\int|u_{0}^{1}-u_{0}^{2}|\,dx,
\end{align*}
for a.e. $t\in[0,T]$ and some constant $C>0$, depending only on
$T$, $\|b\|_{L_{\xi,[-M,M]}^{\infty}(L_{x}^{\infty})}$ and $\|\div\,b\|_{L_{x}^{p}(L_{\xi,[-M,M]}^{\infty})}$,
where $M=\max\{\|u_{0}^{1}\|_{L_{x}^{\infty}},\|u_{0}^{2}\|_{L_{x}^{\infty}}\}$.
\end{thm}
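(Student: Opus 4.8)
The plan is to obtain existence by upgrading a generalized kinetic solution to an entropy solution, and to deduce uniqueness and stability from an $L^{1}$-contraction; both rest on the same two ingredients, namely a renormalization of the kinetic equation that exploits the transport structure of the noise to ``gain a Laplacian'', and the parabolic supersolution constructed in Section \ref{sec:PDE}. For existence, Theorem \ref{thm:generalized_existence} supplies a generalized kinetic solution $f$ with $f_{0}=\chi(u_{0})$, kinetic measure $m$, and defect $\nu\ge0$ satisfying $\partial_{\xi}f=\delta_{0}-\nu$; since all objects are supported in $|\xi|\le M$ by the $L^{\infty}$-bound of Proposition \ref{prop:existence_smooth_b}, the hypotheses on $b$ are only needed locally in $\xi$. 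Because $f_{0}$ is a kinetic function we have $|f_{0}|=f_{0}^{2}$, and the crux is to propagate this, i.e. to show $|f|=f^{2}$ a.e., for then $f=\chi(u)$ for a measurable $u$ which is the sought entropy solution.

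To this end I would derive the equation for the nonnegative defect $h:=|f|-f^{2}=(\sgn\xi-f)f$. Working with the regularizations $f^{\ve,\delta}$ of Remark \ref{rem:version_eps}, which are genuine It\^o processes in $(t,x,\xi)$, I would apply It\^o's formula to the renormalization $\beta(f)=\sgn\xi\,f-f^{2}$, noting $\beta''=-2$ and that $\sgn\xi$ commutes with all $x$- and $t$-derivatives and with $W$. The key algebraic observation is that the It\^o correction produced by the martingale part $-\nabla f^{\ve,\delta}\cdot dW$ exactly cancels the second-order chain-rule defect $\tfrac12\beta''|\nabla f^{\ve,\delta}|^{2}$ arising from the term $\tfrac12\Delta f^{\ve,\delta}$; this is the mechanism by which the diffusion survives renormalization. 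Passing $\ve,\delta\to0$ then requires controlling the DiPerna--Lions commutator between $b\cdot\nabla$ and the mollification, which tends to zero in $L^{1}_{loc}$ thanks to $b\in L^{1}_{\xi,loc}(W^{1,1}_{x,loc})$ together with $\div b\in L^{1}_{\xi,loc}(L^{1}_{x})$. Informally this yields, in It\^o form, $\partial_{t}h+b\cdot\nabla h-\tfrac12\Delta h=(\sgn\xi-2f)\partial_{\xi}m-\nabla h\cdot\dot{W}$.

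Taking expectations removes the (genuine) martingale, and integrating in $\xi$ I would exploit $\partial_{\xi}f=\delta_{0}-\nu$: since $\partial_{\xi}(\sgn\xi-2f)=2\nu\ge0$ and $m\ge0$, an integration by parts in $\xi$ renders the source nonpositive (this is made rigorous on the regularized level, where the pairing of the two nonnegative objects is meaningful and the sign is preserved in the limit). Hence $g:=\int\E h\,d\xi\ge0$ satisfies, distributionally in $(t,x)$, the parabolic inequality $\partial_{t}g+\int b(x,\xi)\cdot\nabla\E h\,d\xi-\tfrac12\Delta g\le0$ with $g_{0}=0$. This estimate is \emph{not closed}, as the drift term is not a function of $g$ alone; the remedy, as foreshadowed in the introduction, is to test the inequality for $\E h$ against the nonnegative $\xi$-independent weight $\vp$ of Section \ref{sec:PDE}, which is a supersolution of the form $\partial_{t}\vp+\div(b(x,\xi)\vp)+\tfrac12\Delta\vp\le C\vp$ uniformly in $\xi$ on $|\xi|\le M$; it is exactly here that $b\in L^{\infty}$ and $\div b\in L^{p}_{x}(L^{\infty}_{\xi,loc})$ with $p>d$ are used. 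Pairing, integrating in $\xi$, and applying Gronwall to $t\mapsto\int g_{t}\vp_{t}\,dx$ forces $g\equiv0$, so $|f|=f^{2}$ and existence follows.

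For uniqueness and stability I would run the same scheme for two entropy solutions $u^{1},u^{2}$ with kinetic functions $\chi^{1},\chi^{2}$, tracking $\int(\chi^{1}-\chi^{2})^{2}\,d\xi=|u^{1}-u^{2}|$ (valid since $\chi^{1},\chi^{2}$ cannot take opposite signs at the same $\xi$). The only genuinely new point is the cross term $\chi^{1}\chi^{2}$, whose evolution I would obtain by a doubling of variables (writing the equations at $(x,\xi)$ and $(y,\zeta)$, forming the product, and collapsing to the diagonal by mollification), giving the source $(\sgn\xi-2\chi^{2})\partial_{\xi}m^{1}+(\sgn\xi-2\chi^{1})\partial_{\xi}m^{2}$, which integrates in $\xi$ to a nonpositive quantity ($2\nu^{2}$ against $m^{1}$, and $2\nu^{1}$ against $m^{2}$) exactly as before. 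With $\vp$ bounded above and below by positive constants this yields $\E\int|u^{1}_{t}-u^{2}_{t}|\,dx\le C\int|u^{1}_{0}-u^{2}_{0}|\,dx$, and uniqueness is the case $u^{1}_{0}=u^{2}_{0}$. I expect the main obstacle to be the rigorous treatment of the singular source $\partial_{\xi}m$ under renormalization: the formal chain rule must be justified on the regularized level using the precise c\`adl\`ag versions of Lemma \ref{lem:version_conv} and Remark \ref{rem:version_eps}, the commutator estimate for the merely $W^{1,1}_{x}$ drift must be combined with the measure term \emph{without losing the crucial sign}, and only then can one pass to the limit and invoke the supersolution $\vp$ to close the otherwise non-closed parabolic estimate.
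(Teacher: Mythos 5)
Your proposal is correct in substance and, for the existence part, follows essentially the paper's route: Theorem \ref{thm:generalized_existence} produces a generalized kinetic solution, the renormalization $\beta(f)=\sgn(\xi)f-f^{2}$ at the level of the regularized c\`adl\`ag versions (with the It\^o correction cancelling the chain-rule defect of $\tfrac12\Delta$, the DiPerna--Lions commutator handled by $b\in L^{1}_{\xi,loc}(W^{1,1}_{x,loc})$, and the sign $\partial_{\xi}f\le\delta_{0}$ killing the $\partial_{\xi}m$ source after integration in $\xi$) yields the non-closed parabolic inequality, which is closed by testing against the supersolution of Section \ref{sec:PDE} and Gronwall; this is exactly Lemmas \ref{lem:eq_f2}, \ref{lem:commutator}, \ref{lem:key_lemma} and Proposition \ref{prop:reconstruction} (note only that passing from $|f|=f^{2}$ to $f=\chi(u)$ also uses the monotonicity in $\xi$ encoded in $\partial_{\xi}f=\delta_{0}-\nu$, which is the content of Proposition \ref{prop:reconstruction} and deserves a few lines). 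Where you genuinely diverge is the stability estimate: you propose a doubling of variables for the cross term $\chi^{1}\chi^{2}$, collapsing $(x,\xi)$ and $(y,\zeta)$ to the diagonal and tracking the source $(\sgn\xi-2\chi^{2})\partial_{\xi}m^{1}+(\sgn\xi-2\chi^{1})\partial_{\xi}m^{2}$. This is the classical kinetic-uniqueness argument and your sign bookkeeping ($2\nu^{2}$ against $m^{1}$, $2\nu^{1}$ against $m^{2}$) is right, but it forces you to redo the whole regularization, commutator and measure-pairing analysis a second time for a product of two different solutions, and in the stochastic setting also to track the cross quadratic variation $\nabla_{x}\chi^{1}\cdot\nabla_{y}\chi^{2}$ coming from the common noise before the parabolic gain reappears on the diagonal. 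The paper avoids all of this with one observation: $f:=\tfrac12(\chi^{1}+\chi^{2})$ is itself a generalized kinetic solution (with $\nu=\delta_{0}-\tfrac12(\delta_{\xi=u^{1}}+\delta_{\xi=u^{2}})$), and the algebraic identity $|f|-f^{2}=\tfrac14|\chi^{1}-\chi^{2}|^{2}$ together with $\int|\chi^{1}_{t}-\chi^{2}_{t}|^{2}\,d\xi=|u^{1}_{t}-u^{2}_{t}|$ lets Lemma \ref{lem:key_lemma} be applied verbatim to give the $L^{1}$-contraction. Your route buys nothing extra here and costs a substantial amount of additional rigour; the averaging trick is the efficient way to recycle the key estimate, and I would recommend adopting it.
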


\begin{rem}
As it will be clear from the proof, the result can be generalized
to fluxes with $b(x,u)$ replaced by
\begin{align*}
 & \sum_{k=1}^{N}b_{k}(x,u),
\end{align*}
where $b_{k}$ satisfy the assumptions of Theorem \ref{thm:well_posedness}
with integrability exponents $p_{k}>d$ (i.e.~$\div\,b_{k}\in L_{x}^{p_{k}}(L_{\xi,loc}^{\infty})$)
which can vary with $k$.

Another generalization concerns the condition $\div\,b\in L_{\xi,loc}^{1}(L_{x}^{1})\cap L_{x}^{p}(L_{\xi,loc}^{\infty})$,
which can be relaxed to $\div\,b\in L_{x,\xi,loc}^{1}$, $(\div\,b)_{+}\in L_{\xi,loc}^{1}(L_{x}^{1})\cap L_{x}^{p}(L_{\xi,loc}^{\infty})$.
Indeed, only the bound on the positive part of $\div\,b$ is required
for the a priori estimates in the proof of the existence of generalized
solutions as well as in the proof of uniqueness.
\end{rem}

The proof of Theorem \ref{thm:well_posedness} follows from the following
two preliminary results, the key estimate being the following
\begin{lem}
\label{lem:key_lemma}Assume that $b\in L_{\xi,loc}^{\infty}(L_{x}^{\infty})\cap L_{\xi,loc}^{1}(W_{x,loc}^{1,1})$
and that $\div\,b\in L_{x}^{p}(L_{\xi,loc}^{\infty})$ for some $p>d$,
\textup{$p\le\infty$}. Let $f$ be a generalized kinetic solution
to \eqref{eq:intro-stoch-burgers-inhomo}, supported on $[0,T]\times\Omega\times\R^{d}\times[-R,R]$
for some $R\ge0$. Then, 
\begin{align}
\E\int_{\R^{d}\times\R}(|f_{t}|-f_{t}^{2})\,d\xi dx & \le C\int_{\R^{d}\times\R}(|f_{0}|-f_{0}^{2})\,d\xi dx,\label{eq:key_ineq}
\end{align}
for a.e.~$t\in[0,T]$ and some constant $C>0$, depending only on
$T$, $\|b\|_{L_{\xi,[-R,R]}^{\infty}(L_{x}^{\infty})}$ and $\|\div\,b\|_{L_{x}^{p}(L_{\xi,[-R,R]}^{\infty})}$.
\end{lem}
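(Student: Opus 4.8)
The plan is to control the defect $S(f):=|f|-f^{2}$, which is nonnegative by \eqref{eq:gen_kinetic_measure} and vanishes exactly when $f$ is a genuine kinetic function, and to show that a suitably \emph{weighted} spatial average of $S(f)$ obeys a closed differential inequality. Since $|\cdot|$ and $(\cdot)^{2}$ are smooth away from the origin, the starting point is to renormalise the equation \eqref{eq:kinetic_measure} for $f$. Formally, using that the transport noise is of Stratonovich type, the chain rule gives
\begin{equation*}
\partial_{t}S(f)+b(x,\xi)\cdot\nabla S(f)+\nabla S(f)\circ dW_{t}=S'(f)\,\partial_{\xi}m,
\end{equation*}
with $S'(f)=\sgn(\xi)-2f$ on the support of $f$ (because $|f|=\sgn(\xi)f$). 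Rigorously, I would work with the mollified objects $f^{\epsilon,\delta}$ and the precise versions furnished by Lemma \ref{lem:version_conv} and Remark \ref{rem:version_eps}, apply the It\^o formula for c\`adl\`ag processes to $S_{\eta}(f^{\epsilon,\delta})$ with $S_{\eta}$ a $C^{2}$ approximation of $r\mapsto|r|-r^{2}$, and then remove the regularisations. The transport term is handled by a DiPerna--Lions commutator estimate (cf.~\cite{DPL89}), which vanishes in the limit precisely because $b\in L^{1}_{\xi,loc}(W^{1,1}_{x,loc})$; this is what legitimates the identification $S'(f^{\epsilon,\delta})(b\cdot\nabla f)^{\epsilon,\delta}\to b\cdot\nabla S(f)$.

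The decisive structural feature is that the second-order It\^o correction coming from the martingale part $-\nabla f^{\epsilon,\delta}\cdot dW$ combines with the Stratonovich corrector $\tfrac12\Delta f$ already present in \eqref{eq:kinetic_measure}: since $S'(f)\tfrac12\Delta f+\tfrac12 S''(f)|\nabla f|^{2}=\tfrac12\Delta S(f)$, one obtains, after passing to the It\^o formulation,
\begin{equation*}
\partial_{t}S(f)+b(x,\xi)\cdot\nabla S(f)+\tfrac12\Delta S(f)+\nabla S(f)\cdot dW_{t}=S'(f)\,\partial_{\xi}m .
\end{equation*}
At this point I would test against a nonnegative weight $\varphi=\varphi(t,x)$ that is \emph{independent of $\xi$}, integrate in $x$ and $\xi$, and take expectation, so that the martingale drops out. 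Integration by parts moves the spatial derivatives onto $\varphi$ and produces the term $\E\int S(f)\bigl[\partial_{t}\varphi+\div(b(x,\xi)\varphi)+\tfrac12\Delta\varphi\bigr]\,dxd\xi$.

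The singular term is treated by the cancellation built into $S$. Integrating by parts in $\xi$ and using that $\varphi$ does not depend on $\xi$, the measure contribution equals $-\E\int_{0}^{t}\int\varphi\,\partial_{\xi}\bigl(\sgn(\xi)-2f\bigr)\,dm$; since $\partial_{\xi}f=\delta_{0}-\nu\le\delta_{0}$ with $\nu\ge0$, the $\xi=0$ singularities of $\partial_{\xi}\sgn(\xi)$ and $\partial_{\xi}f$ cancel, leaving $\partial_{\xi}(\sgn(\xi)-2f)=2\nu\ge0$. Together with $m\ge0$ and $\varphi\ge0$ this forces the entire measure term to be $\le0$. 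Justifying this sign rigorously---without ever forming the product of the two measures $\nu$ and $m$, and keeping the correct c\`adl\`ag version in $t$ together with the differentiability in $\xi$---is where the simultaneous regularisation of Remark \ref{rem:version_eps} is indispensable, and this is the step I expect to be the main obstacle, being exactly the new difficulty caused by the nonlinearity.

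It remains to close the resulting inequality
\begin{equation*}
\E\!\int\!\varphi_{t}S(f_{t})\,dxd\xi\le\int\!\varphi_{0}S(f_{0})\,dxd\xi+\E\!\int_{0}^{t}\!\!\int S(f)\bigl[\partial_{r}\varphi+\div(b\varphi)+\tfrac12\Delta\varphi\bigr]dxd\xi\,dr .
\end{equation*}
Because $b$ depends on $\xi$ while $\varphi$ does not, this is non-closed; I would remove the obstruction by invoking the parabolic construction of Section \ref{sec:PDE} to produce a weight $\varphi\ge0$, bounded between two positive constants on $[0,T]$, satisfying $\partial_{t}\varphi+\div(b(x,\xi)\varphi)+\tfrac12\Delta\varphi\le C\varphi$ for a.e.~$\xi$. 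This is precisely where the hypotheses $b\in L^{\infty}_{\xi,loc}(L^{\infty}_{x})$ and $\div b\in L^{p}_{x}(L^{\infty}_{\xi,loc})$ with $p>d$ are used. Since $S(f)\ge0$ and $\varphi$ is $\xi$-independent, this gives $\E\int\varphi_{t}S(f_{t})\le\int\varphi_{0}S(f_{0})+C\int_{0}^{t}\E\int\varphi_{r}S(f_{r})\,dr$, and Gr\"onwall yields $\E\int\varphi_{t}S(f_{t})\le e^{Ct}\int\varphi_{0}S(f_{0})$. Finally, the two-sided bound on $\varphi$ converts this weighted estimate into the unweighted bound \eqref{eq:key_ineq}.
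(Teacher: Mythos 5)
Your proposal follows essentially the same route as the paper: first the renormalization/commutator step yielding the inequality of Lemma \ref{lem:eq_f2} (with the measure term killed by the sign of $\partial_{\xi}(\sgn(\xi)-2f)=2\nu\ge 0$, made rigorous via the simultaneous regularization), and then the duality argument with a nonnegative, $\xi$-independent parabolic weight built from $F(x)=\|\div b(x,\cdot)\|_{L^{\infty}_{\xi,[-R,R]}}$ plus Gr\"onwall. The only cosmetic difference is that the paper does not construct a single weight bounded below by a positive constant, but instead uses weights $\varphi^{\epsilon}$ with compactly supported terminal data $\psi_{1/\epsilon}\nearrow 1$ and applies Gr\"onwall to the unweighted quantity after passing to the limit (together with a Lusin-type argument to upgrade the a.e.-in-$t$ inequality), which avoids the need for a two-sided bound on the weight.
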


Note that $|f_{t}|-f_{t}^{2}\ge0$ for any generalized kinetic solution,
since $|f|\le1$ by definition. When the initial datum $f_{0}$ is
the kinetic function of some $u_{0}$, that is, if $f_{0}(x,\xi)=\chi(u_{0})(x,\xi)$,
then Lemma \ref{lem:key_lemma} implies that $f$ takes values in
$\{0,\pm1\}$. In this case $f$ is a true kinetic function:
\begin{prop}
\label{prop:reconstruction}Assume that $b$ satisfies the assumptions
of Lemma \ref{lem:key_lemma} and let $f$ be a generalized kinetic
solution to \eqref{eq:intro-stoch-burgers-inhomo} starting from $f_{0}=\chi(u_{0})$,
for some $u_{0}$ in $(L^{1}\cap L^{\infty})(\R^{d})$. Then there
exists an entropy solution $u$ to \eqref{eq:intro-stoch-burgers-inhomo}
such that $f(x,\xi,t)=\chi(\xi,u(x,t))$ a.e. in $(t,\omega,x,\xi)$.
\end{prop}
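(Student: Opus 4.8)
The plan is to feed the special initial datum $f_{0}=\chi(u_{0})$ into Lemma \ref{lem:key_lemma} in order to force $f$ to be $\{0,\pm1\}$-valued, and then to reconstruct $u$ from the sign and monotonicity structure recorded in Definition \ref{def:path_e-soln-1}. First I would note that $\chi(u_{0})(x,\xi)=1_{\xi<u_{0}(x)}-1_{\xi<0}$ takes only the values $0,\pm1$, so $|f_{0}|=f_{0}^{2}$ pointwise and hence $\int_{\R^{d}\times\R}(|f_{0}|-f_{0}^{2})\,d\xi dx=0$. Lemma \ref{lem:key_lemma} then gives $\E\int_{\R^{d}\times\R}(|f_{t}|-f_{t}^{2})\,d\xi dx\le0$ for a.e.\ $t$. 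Since $|f|\le1$ implies $|f_{t}|-f_{t}^{2}=|f_{t}|(1-|f_{t}|)\ge0$, I conclude $|f_{t}|=f_{t}^{2}$ for a.e.\ $(t,\omega,x,\xi)$.

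Next I would combine this with the relations in \eqref{eq:gen_kinetic_measure}. The identity $|f|=\sgn(\xi)f$ together with $|f|=f^{2}$ yields $\sgn(\xi)f=f^{2}$, so for a.e.\ $(t,\omega,x)$ the profile $\xi\mapsto f(\xi)$ equals $1$ or $0$ on $\{\xi>0\}$ and $-1$ or $0$ on $\{\xi<0\}$. Moreover the second line of \eqref{eq:gen_kinetic_measure} encodes $\partial_{\xi}f=\delta_{0}-\nu$ with $\nu\ge0$, so $\partial_{\xi}f\le0$ away from $\xi=0$, i.e.\ $\xi\mapsto f(\xi)$ is nonincreasing on $(0,\infty)$ and on $(-\infty,0)$. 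A nonincreasing $\{0,1\}$-valued function on $(0,\infty)$ is $1_{(0,a)}$, and a nonincreasing $\{0,-1\}$-valued function on $(-\infty,0)$ is $-1_{(b,0)}$; finally the jump $f(0^{+})-f(0^{-})=1_{a>0}+1_{b<0}$ must equal the atom $1-\nu(\{0\})\le1$ of $\partial_{\xi}f$ at the origin, which rules out simultaneously $a>0$ and $b<0$. Hence for a.e.\ $(t,\omega,x)$ one has $f=\chi(u)$ with $u:=\int_{\R}f\,d\xi$ (equal to $a$, to $b$, or to $0$).

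It then remains to check that this $u$ is an entropy solution. Since $f$ is weakly-{*} progressively measurable, so is $u=\int_{\R}f\,d\xi$, and the uniform bound and support property of $f$ place $u$ in $L_{t,\omega}^{\infty}(L_{x}^{1}\cap L_{x}^{\infty})$ with $\chi(u)$ supported in $[-R,R]$. Because $f=\chi(u)$ and $f$ solves \eqref{eq:kinetic_measure} with the bounded kinetic measure $m$, the function $\chi(u)$ satisfies exactly the kinetic identity required of a kinetic solution, with initial datum $\int_{\R}f_{0}\,d\xi=u_{0}$; this means, by the definition of an entropy solution, that $u$ solves \eqref{eq:intro-stoch-burgers-inhomo}.

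The main obstacle is the reconstruction step, carried out with full joint measurability. The pointwise-in-$\xi$ analysis (monotonicity together with the Dirac structure at the origin) must be made to hold on a set of full measure in $(t,\omega,x)$, and the resulting $u$ must be selected measurably; the delicate point is the rigorous use of $\partial_{\xi}f=\delta_{0}-\nu$ to exclude the coexistence of a positive and a negative branch, since this involves the atomic part of the measures and must be quantified rather than argued for each fixed $(t,\omega,x)$ in isolation. Controlling the trace at $\xi=0$ and passing from the a.e.\ identity $|f_{t}|=f_{t}^{2}$ to a genuine representative of $f$ for which these pointwise statements are meaningful is where most of the technical care is needed.
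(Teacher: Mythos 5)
Your proposal is correct and follows essentially the same route as the paper: $|f_{0}|=f_{0}^{2}$ plus Lemma \ref{lem:key_lemma} forces $f\in\{0,\pm1\}$ a.e., and the sign condition $|f|=\sgn(\xi)f$ together with $\partial_{\xi}f=\delta_{0}-\nu$, $\nu\ge0$, yields monotonicity on each half-line and a jump of at most $1$ at the origin, whence $f=\chi(u)$ with $u=\int_{\R}f\,d\xi$. The technical work you correctly flag as the main obstacle (upgrading the $\xi$-pointwise monotonicity and jump analysis to a.e.\ statements in $(t,\omega,x)$ and handling the atom at $\xi=0$ without disintegrating $\nu$) is exactly what the paper carries out, by testing \eqref{eq:gen_kinetic_measure} against $\psi=-\partial_{\xi}\varphi$ for nonpositive nonincreasing $\varphi$ to obtain the two a.e.\ inequalities $(f(\xi)-f(\xi+h))(1_{\xi<-h}+1_{\xi>h})\ge0$ and $f(\xi)-f(\xi+h)+1\ge0$, followed by a Fubini argument.
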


Lemma \ref{lem:key_lemma} and Proposition \ref{prop:reconstruction},
together with Theorem \ref{thm:generalized_existence}, imply the
well-posedness result Theorem \ref{thm:well_posedness}:
\begin{proof}
[Proof of Theorem \ref{thm:well_posedness}]Concerning the existence
of an entropy solution, Theorem \ref{thm:generalized_existence} yields
the existence of a generalized kinetic solution $f$ to \eqref{eq:intro-stoch-burgers-inhomo}.
Proposition \ref{prop:reconstruction} then implies the existence
of an entropy solution to \eqref{eq:intro-stoch-burgers-inhomo}.

For stability, let $\chi^{i}=\chi(u^{i},\xi)$ be the kinetic functions
associated to $u^{i}$, $i=1,2$. Note that $|\chi^{1}-\chi^{2}|^{2}=|\chi^{1}-\chi^{2}|=1_{u^{1}\le\xi<u^{2}}+1_{u^{2}\le\xi<u^{1}}$
for a.e.~$\xi$ and, in particular, $\int|\chi_{t}^{1}-\chi_{t}^{2}|^{2}\,d\xi=|u_{t}^{1}-u_{t}^{2}|$.
Therefore, the statement is equivalent to
\begin{align}
\E\int|\chi_{t}^{1}-\chi_{t}^{2}|^{2}dxd\xi & \le C\int|\chi_{0}^{1}-\chi_{0}^{2}|^{2}\,dxd\xi.\label{eq:stability_chi-1}
\end{align}

Now consider $f:=\frac{1}{2}(\chi^{1}+\chi^{2})$. Then $f$ is a
generalized kinetic solution, with associated Young measure $\nu=\delta_{0}-\frac{1}{2}(\delta_{\xi=u^{1}}+\delta_{\xi=u^{2}})$.
Moreover,
\begin{align*}
|f|-f^{2} & =\frac{1}{2}\sgn(\xi)(\chi^{1}+\chi^{2})-\frac{1}{4}((\chi^{1})^{2}+(\chi^{2})^{2}+2\chi^{1}\chi^{2})\\
 & =\frac{1}{2}(|\chi^{1}|+|\chi^{2}|)-\frac{1}{4}(|\chi^{1}|+|\chi^{2}|+2\chi^{1}\chi^{2})\\
 & =\frac{1}{4}|\chi^{1}-\chi^{2}|^{2}.
\end{align*}
Therefore, Lemma \ref{lem:key_lemma} implies \eqref{eq:stability_chi-1}.
Uniqueness follows from stability, thus, the proof is complete.
\end{proof}
In order to prove Lemma \ref{lem:key_lemma}, we will use the equations
(more precisely, certain inequalities) satisfied by $|f|$ and $f^{2}$.
We recall that, since $f$ satisfies a transport-type equation, for
any function $\beta$ regular enough, informally $\beta(f)$ also
satisfies a transport-type equation. This property is known as renormalization.
When coming to a rigorous proof, however, problems can appear from
the drift term, when $b$ is not regular enough, and from the kinetic
measure term $m$. The Sobolev assumption on $b$, as in the theory
of DiPerna, Lions \cite{DPL89} and Ambrosio \cite{A04-1}, ensures
that the drift term behaves nicely. The presence of the kinetic measure
$m$ does not allow to write an equation for $|f|$ and $f^{2}$ themselves
but is enough for the following inequality:
\begin{lem}
\label{lem:eq_f2}Assume that $b\in L_{\xi,loc}^{1}(W_{x,loc}^{1,1})$.
Let $f$ be a generalized kinetic solution to \eqref{eq:intro-stoch-burgers-inhomo}.
Then, for every nonnegative test function $\varphi$ in $C_{c}^{\infty}([0,T]\times\R^{d})$
independent of $\xi$, it holds for a.e.~$(t,\omega)$,
\begin{align*}
\int_{\R^{d}\times\R}(|f_{t}|-f_{t}^{2})\varphi_{t}\,dxd\xi\le & \int_{0}^{t}\int_{\R^{d}\times\R}[\partial_{t}\varphi+\frac{1}{2}\Delta\varphi+\div(b\varphi)](|f|-f^{2})\,dxd\xi dr\\
 & +\int_{0}^{t}\int_{\R^{d}\times\R}\nabla\varphi(|f|-f^{2})\,dxd\xi dW_{r}.
\end{align*}
\end{lem}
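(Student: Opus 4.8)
The plan is to derive the inequality by renormalizing the mollified solution, testing against $\varphi$, and passing to the limit, at which point a kinetic-measure term of the right sign can simply be discarded. First I would mollify $f$ in $(x,\xi)$ to $f^{\epsilon,\delta}=f*_{x,\xi}(\rho^{\epsilon}\bar{\rho}^{\delta})$; by Remark~\ref{rem:version_eps} its c\`adl\`ag version is a semimartingale solving
\[
df^{\epsilon,\delta}=\Big(\tfrac12\Delta f^{\epsilon,\delta}-(b\cdot\nabla f)^{\epsilon,\delta}\Big)\,dr-\nabla f^{\epsilon,\delta}\,dW_{r}+\partial_{\xi}m^{\epsilon,\delta}.
\]
To this I would apply the c\`adl\`ag It\^o formula to the renormalized quantity $G^{\epsilon,\delta}:=\sgn^{\delta}(\xi)\,f^{\epsilon,\delta}-(f^{\epsilon,\delta})^{2}$, where $\sgn^{\delta}:=\sgn*_{\xi}\bar{\rho}^{\delta}$ is the sign mollified by the \emph{same} kernel $\bar{\rho}^{\delta}$. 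The algebraic heart of renormalization is that the It\^o correction $|\nabla f^{\epsilon,\delta}|^{2}\,dr$ produced by the martingale part combines with $2f^{\epsilon,\delta}\cdot\tfrac12\Delta f^{\epsilon,\delta}$ into exactly $\tfrac12\Delta\big((f^{\epsilon,\delta})^{2}\big)$, so that no spurious positive gradient term survives; and since $\sgn^{\delta}(\xi)$ and squaring commute with $\nabla_{x}$, the transport, noise and Laplacian coefficients reorganize into $\div(b\,\cdot)$, $\nabla(\cdot)$ and $\tfrac12\Delta(\cdot)$ acting on $G^{\epsilon,\delta}$.

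Next I would test against a nonnegative $\varphi(t,x)\in C_{c}^{\infty}([0,T]\times\R^{d})$, integrate in $(x,\xi)$, and use the product rule in time. Letting $\epsilon,\delta\to0$, the terms $\langle G^{\epsilon,\delta}_{t},\varphi_{t}\rangle$, $\langle G^{\epsilon,\delta},\partial_{t}\varphi+\tfrac12\Delta\varphi\rangle$, the stochastic integral $\int_{0}^{t}\langle G^{\epsilon,\delta},\nabla\varphi\rangle\,dW$ and the initial term $\langle G^{\epsilon,\delta}_{0},\varphi_{0}\rangle$ all converge, using $|f^{\epsilon,\delta}|\le1$, the strong $L^{p}_{loc}$ convergence $f^{\epsilon,\delta}\to f$, $\sgn^{\delta}\to\sgn$, and the defining identity $|f|=\sgn(\xi)f$, which gives $G^{\epsilon,\delta}\to|f|-f^{2}$. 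The drift term is where $b\in L^{1}_{\xi,loc}(W^{1,1}_{x,loc})$ enters: by the DiPerna--Lions/Ambrosio commutator lemma \cite{DPL89,A04-1} the mollified transport $(b\cdot\nabla f)^{\epsilon,\delta}$ differs from $b\cdot\nabla f^{\epsilon,\delta}$ by an error that vanishes in $L^{1}_{loc}$, so the bounded factor $(\sgn^{\delta}-2f^{\epsilon,\delta})$ times the principal part equals $b\cdot\nabla G^{\epsilon,\delta}$, and an integration by parts in $x$ turns it into $\langle G^{\epsilon,\delta},\div(b\varphi)\rangle\to\langle|f|-f^{2},\div(b\varphi)\rangle$. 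Collecting these, the non-measure contributions reproduce precisely the claimed right-hand side, together with the (nonnegative) initial term $\langle|f_{0}|-f_{0}^{2},\varphi_{0}\rangle$.

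The crux, and the reason only an inequality survives, is the kinetic-measure term $\int_{0}^{t}\langle(\sgn^{\delta}-2f^{\epsilon,\delta})\,\partial_{\xi}m^{\epsilon,\delta},\varphi\rangle\,dr$. As $\varphi$ is independent of $\xi$, the integration-by-parts identity in $\xi$ from Remark~\ref{rem:version_eps} rewrites it as $-\int_{0}^{t}\int_{\R^{d}\times\R}\varphi\,\partial_{\xi}(\sgn^{\delta}-2f^{\epsilon,\delta})\,m^{\epsilon,\delta}\,dxd\xi\,dr$. This is exactly why I mollify the sign with the same $\bar{\rho}^{\delta}$: one has $\partial_{\xi}\sgn^{\delta}=2\bar{\rho}^{\delta}$, while $\partial_{\xi}f^{\epsilon,\delta}=\bar{\rho}^{\delta}-\nu^{\epsilon,\delta}$ because $\partial_{\xi}f=\delta_{0}-\nu$, so the two smeared Dirac masses cancel \emph{exactly} and $\partial_{\xi}(\sgn^{\delta}-2f^{\epsilon,\delta})=2\nu^{\epsilon,\delta}\ge0$. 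Hence the measure term equals $-2\int_{0}^{t}\int_{\R^{d}\times\R}\varphi\,\nu^{\epsilon,\delta}\,m^{\epsilon,\delta}\,dxd\xi\,dr\le0$, because $\varphi,\nu^{\epsilon,\delta},m^{\epsilon,\delta}\ge0$, and it is simply dropped. I expect this Dirac cancellation to be the main obstacle: multiplying instead by the unmollified $\sgn(\xi)$ pits the atom $\partial_{\xi}\sgn=2\delta_{0}$ against the already smeared $\bar{\rho}^{\delta}$ coming from $\partial_{\xi}f^{\epsilon,\delta}$, and these do not match at fixed $\delta$, so the $\{\xi=0\}$ mass of $m$ fails to cancel.

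Two technical points remain, both supplied by Lemma~\ref{lem:version_conv} and Remark~\ref{rem:version_eps}. Since $m$ may carry atoms in time, $f^{\epsilon,\delta}$ is only c\`adl\`ag, so the square must be expanded with the jump It\^o formula and the predictable (c\`agl\`ad) version of $f^{\epsilon,\delta}$ used inside the measure integral; the extra jump contribution $\sum_{s\le t}(\Delta_{s}f^{\epsilon,\delta})^{2}\ge0$ enters $(f^{\epsilon,\delta})^{2}$, hence $G^{\epsilon,\delta}$ with a favorable sign and may likewise be discarded. Finally, using the stochastic Fubini theorem and the jointly measurable c\`adl\`ag/c\`agl\`ad versions of Lemma~\ref{lem:version_conv}, one checks that testing in $(x,\xi)$, exchanging the order of integration, and passing to the limit are all legitimate.
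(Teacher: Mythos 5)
Your proposal is correct and follows essentially the same route as the paper's proof: the anisotropic mollification $f^{\epsilon,\delta}$, the renormalization of $|f|$ via $\sgn^{\delta}=\sgn*_{\xi}\bar{\rho}^{\delta}$ with the \emph{same} kernel so that the smeared Dirac masses cancel and the residual measure contribution $-2\int\varphi\,\nu^{\epsilon,\delta}m^{\epsilon,\delta}\le0$ can be dropped, the c\`adl\`ag It\^o formula for $(f^{\epsilon,\delta})^{2}$ with the quadratic-variation term $|\nabla f^{\epsilon,\delta}|^{2}$ absorbed into $\tfrac12\Delta\big((f^{\epsilon,\delta})^{2}\big)$, and the commutator treatment of the drift are exactly the paper's steps. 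The only caveats are that the commutator must be the anisotropic one of Lemma~\ref{lem:commutator} (taking $\delta\to0$ before $\epsilon\to0$, since $b$ has no Sobolev regularity in $\xi$) rather than the isotropic DiPerna--Lions lemma you cite, and that the initial term $\langle|f_{0}|-f_{0}^{2},\varphi_{0}\rangle$ you correctly retain does belong on the right-hand side, as in the paper's own application of the lemma in \eqref{eq:ineq_gen_sol}, even though it is omitted from the statement as printed.
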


For the proof of this Lemma we need the following commutator lemma.
Recall that $f^{\epsilon,\delta}=f*_{x,\xi}(\rho^{\epsilon}\bar{\rho}^{\delta})$.
\begin{lem}
\label{lem:commutator}Assume that $b\in L_{\xi,loc}^{1}(W_{x,loc}^{1,1})$.
Then it holds, for every finite $m\ge1$,
\begin{align*}
\lim_{\epsilon\rightarrow0}\lim_{\d\to0} & \E\int_{0}^{T}|\int_{\R^{d}\times\R}\int_{\R^{d}\times\R}f_{r}^{\epsilon,\delta}(x,\xi)f_{r}(y,\zeta)\bar{\rho}_{\delta}(\xi-\zeta)\Big(\nabla\rho_{\epsilon}(x-y)\cdot(b(x,\xi)-b(y,\zeta))\\
 & \qquad+\rho_{\epsilon}(x-y)\div_{y}\,b(y,\zeta)\Big)\varphi_{r}(x)\,dyd\zeta dxd\xi|^{m}\,dr\\
 & =0.
\end{align*}
\end{lem}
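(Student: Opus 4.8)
The plan is to recognize the double integral, at a fixed time $r$ and a fixed $\omega$ (the field $b$ being deterministic, while $f_{r}(\omega,\cdot,\cdot)$ is a fixed function bounded by $1$ and supported in $|\xi|\le R$), as the DiPerna--Lions commutator of the transport operator $b(x,\xi)\cdot\nabla_{x}$ and the mollification $*_{x,\xi}(\rho_{\epsilon}\bar{\rho}_{\delta})$ applied to $f$, and to send the two parameters to zero in the prescribed order. Denote by $C^{\epsilon,\delta}_{r}(x,\xi)$ the inner integral, so that the quantity to estimate is $J^{\epsilon,\delta}_{r}=\int_{\R^{d}\times\R}f^{\epsilon,\delta}_{r}(x,\xi)\,\varphi_{r}(x)\,C^{\epsilon,\delta}_{r}(x,\xi)\,dxd\xi$, and split $C^{\epsilon,\delta}=A^{\epsilon,\delta}+B^{\epsilon,\delta}+D^{\epsilon,\delta}$ according to the three summands carrying $b(x,\xi)$, $b(y,\zeta)$ and $\div_{y}b(y,\zeta)$ respectively.

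I would first send $\delta\to0$ at $\epsilon$ fixed. The $A$-term is $b(x,\xi)\cdot\nabla_{x}f^{\epsilon,\delta}(x,\xi)$; since $\nabla_{x}f^{\epsilon,\delta}$ is bounded by $\|\nabla\rho_{\epsilon}\|_{\infty}$ and converges boundedly a.e.\ to $\nabla_{x}f^{\epsilon,0}$, with $f^{\epsilon,0}:=f*_{x}\rho_{\epsilon}$, its contribution converges by dominated convergence against the weight $|b|\mathbf{1}_{K}\in L^{1}$. For the $B$- and $D$-terms one uses only $|f|\le1$, $\|\nabla\rho_{\epsilon}\|_{\infty},\|\rho_{\epsilon}\|_{\infty}<\infty$ and $b,\div b\in L^{1}_{\xi,loc}(L^{1}_{x,loc})$: each is a $\zeta$-mollification (against $\bar{\rho}_{\delta}$) of a fixed $L^{1}_{loc}(\R^{d}_{x}\times\R_{\xi})$ function and hence converges, in $L^{1}_{x,\xi}(K)$ with $K$ the compact set coming from $\mathrm{supp}\,\varphi$ and $|\xi|\le R$, to its diagonal version obtained by setting $\zeta=\xi$; this step uses only $L^{1}$-continuity of translations and \emph{no} regularity of $b$ in $\xi$. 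Multiplying by the factor $f^{\epsilon,\delta}\varphi$, which is uniformly bounded and converges boundedly a.e.\ to $f^{\epsilon,0}\varphi$, I obtain the pathwise limit $J^{\epsilon,\delta}_{r}\to J^{\epsilon,0}_{r}:=\int f^{\epsilon,0}_{r}\varphi_{r}\,C^{\epsilon,0}_{r}$ with
\[
C^{\epsilon,0}(x,\xi)=\int_{\R^{d}}f(y,\xi)\big[(b(x,\xi)-b(y,\xi))\cdot\nabla\rho_{\epsilon}(x-y)+\rho_{\epsilon}(x-y)\,\div_{y}b(y,\xi)\big]\,dy.
\]

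Next I would send $\epsilon\to0$. For a.e.\ fixed $\xi$ the function $C^{\epsilon,0}(\cdot,\xi)$ is exactly the DiPerna--Lions commutator in $x$ associated with the vector field $b(\cdot,\xi)\in W^{1,1}_{x,loc}$ and the bounded function $f(\cdot,\xi)$. The classical commutator lemma \cite{DPL89,A04-1} then supplies both the uniform bound $\|C^{\epsilon,0}(\cdot,\xi)\|_{L^{1}_{x}(K)}\le C\|b(\cdot,\xi)\|_{W^{1,1}_{x}(K')}$ and the strong convergence $C^{\epsilon,0}(\cdot,\xi)\to0$ in $L^{1}_{x,loc}$. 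Since $f$ is supported in $|\xi|\le R$ and $b\in L^{1}_{\xi,loc}(W^{1,1}_{x,loc})$, the map $\xi\mapsto\|b(\cdot,\xi)\|_{W^{1,1}_{x}(K')}$ is integrable over the $\xi$-support, so integrating in $\xi$ by dominated convergence and using $|f^{\epsilon,0}\varphi|\le\|\varphi\|_{\infty}$ gives $J^{\epsilon,0}_{r}\to0$ pathwise.

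Finally, I would transport these pathwise limits through $\E\int_{0}^{T}|\cdot|^{m}\,dr$ by two successive applications of dominated convergence. At $\epsilon$ fixed, the crude bound $\|C^{\epsilon,\delta}_{r}(\omega)\|_{L^{1}(K)}\le C_{\epsilon}(\|b\|_{L^{1}(K'')}+\|\div b\|_{L^{1}(K'')})$, obtained from $|f|\le1$ and the bounds on $\rho_{\epsilon},\nabla\rho_{\epsilon}$, is uniform in $(\delta,r,\omega)$ and deterministic, whence $\lim_{\delta\to0}\E\int_{0}^{T}|J^{\epsilon,\delta}_{r}|^{m}\,dr=\E\int_{0}^{T}|J^{\epsilon,0}_{r}|^{m}\,dr$; for the outer limit the integrated-in-$\xi$ commutator bound $\|C^{\epsilon,0}_{r}(\omega)\|_{L^{1}_{x,\xi}(K)}\le C\|b\|_{L^{1}_{\xi}(W^{1,1}_{x})}$ is uniform in $(\epsilon,r,\omega)$, whence $\lim_{\epsilon\to0}\E\int_{0}^{T}|J^{\epsilon,0}_{r}|^{m}\,dr=0$. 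The iterated limit is therefore zero. The main obstacle is precisely the asymmetry of the hypotheses on $b$, Sobolev in $x$ but merely $L^{1}$ in $\xi$: the DiPerna--Lions difference-quotient mechanism, which converts $(b(x)-b(y))\cdot\nabla\rho_{\epsilon}(x-y)$ into $\nabla b$ in the limit, works only in $x$, and the purpose of sending $\delta\to0$ first is to strip off the $\xi$-mollification harmlessly, so that the $x$-commutator lemma can be applied at each frozen $\xi$ --- which is exactly why the two limits cannot be exchanged.
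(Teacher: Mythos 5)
Your proof is correct and follows essentially the same route as the paper's: both send $\delta\to0$ first (using only $L^{1}$-continuity of $\xi$-translations to strip the $\xi$-mollification at fixed $\epsilon$) and then $\epsilon\to0$, with the DiPerna--Lions difference-quotient mechanism in $x$ at frozen $\xi$, the uniform $L^{1}_{x}$ commutator bound by $\|b(\cdot,\xi)\|_{W^{1,1}_{x}}$, and dominated convergence in $\xi$, $r$, $\omega$ doing the work. The only difference is cosmetic: after the $\delta\to0$ reduction you cite the classical commutator lemma as a black box, whereas the paper reproves it via the change of variables $z=(x-y)/\epsilon$, $\eta=(\xi-\zeta)/\delta$, splitting into three terms whose limits are $-\int f^{2}\div b\,\varphi$, $+\int f^{2}\div b\,\varphi$ and $0$ and exhibiting the cancellation explicitly.
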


\begin{rem}
This is the only point where we need the Sobolev assumption on $b$.
One may note that, in Ambrosio \cite{A04-1}, the renormalization
property for the linear transport equation is proved only assuming
$BV$ regularity for $b$, roughly speaking by showing the above commutator
estimate for a carefully chosen kernel $\rho$. One may expect that
this strategy also works here, but we do not investigate this issue
any further.
\end{rem}

\begin{proof}
[Proof of Lemma \ref{lem:commutator}]The proof is obtained by adapting
the classical commutator lemma (see for example \cite{DPL89,A04-1})
to this anisotropic regularization in $x$ and $\xi$, which was also
used by Chen, Perthame in \cite{ChePer2003}. Since $b$ is weakly
differentiable in the $x$ variable, we have for a.e.~$(x,y,\xi)$
\begin{align*}
b(x,\xi)-b(y,\xi)=\int_{0}^{1}D_{x}b(y+a(x-y),\xi)(x-y)\,da.
\end{align*}
This formula can be obtained by approximation of $b$ in $L_{\xi,loc}^{1}(W_{x,loc}^{1,1})$
with regular $b^{n}$. By the change of variable $z=(x-y)/\epsilon$,
$\eta=(\xi-\zeta)/\delta$, we obtain
\begin{align}
 & \int_{\R^{d}\times\R}\int_{\R^{d}\times\R}f_{r}^{\epsilon,\delta}(x,\xi)f_{r}(y,\zeta)\bar{\rho}_{\delta}(\xi-\zeta)\Big(\nabla\rho_{\epsilon}(x-y)\cdot(b(x,\xi)-b(y,\zeta))\nonumber \\
 & \qquad+\rho_{\epsilon}(x-y)\div_{y}\,b(y,\zeta)\Big)\varphi_{r}(x)\,dyd\zeta dxd\xi\nonumber \\
 & =\int_{0}^{1}\int\bar{\rho}(\eta)\nabla\rho(z)\cdot\int f_{r}^{\epsilon,\delta}(x,\xi)f_{r}(x-\epsilon z,\xi-\delta\eta)\cdot D_{x}b(x-a\epsilon z,\xi)z\varphi_{r}(x)\,dxd\xi dzd\eta da\nonumber \\
 & +\int\bar{\rho}(\eta)\rho(z)\int f_{r}^{\epsilon,\delta}(x,\xi)f_{r}(x-\epsilon z,\xi-\delta\eta)\div\,b(x-\epsilon z,\xi-\delta\eta)\varphi_{r}(x)\,dxd\xi dzd\eta\nonumber \\
 & +\frac{1}{\epsilon}\int\bar{\rho}(\eta)\nabla\rho(z)\cdot\int f_{r}^{\epsilon,\delta}(x,\xi)f_{r}(x-\epsilon z,\xi-\delta\eta)(b(x,\xi-\delta\eta)-b(x,\xi))\varphi_{r}(x)\,dxd\xi dzd\eta\nonumber \\
 & =:A+B+C\label{eq:comm11}
\end{align}
Here and in the following we can suppose without loss of generality
that all the integrals range over a compact set independent of $\epsilon$,
$\delta$, $r$ and $\omega$, since the test functions $\varphi$,
$\rho$, $\bar{\rho}$ are compactly supported and $f_{r}(x,\xi)$
and $f_{r}^{\epsilon,\delta}(x,\xi)$ are compactly supported in $\xi$
uniformly in $\epsilon$, $\delta$, $r$ and $\omega$.

We start with the first integral $A$ on the right hand side of \eqref{eq:comm11}.
We first take the $L_{t,\omega}^{m}$-limit as $\delta\rightarrow0$
and we find that $A$ converges to
\begin{align}
 & \int_{0}^{1}\int\nabla\rho(z)\cdot\int f_{r}^{\epsilon}(x,\xi)f_{r}(x-\epsilon z,\xi)\cdot D_{x}b(x-a\epsilon z,\xi)z\varphi_{r}(x)\,dxd\xi dzda,\label{eq:Aeps}
\end{align}
where $f^{\epsilon}(x,\xi)=f(\cdot,\xi)*\rho^{\epsilon}(x)$. The
proof of this fact is standard and relies on arguments similar to,
but simpler than, those for the limit as $\epsilon\rightarrow0$,
so we omit it. Now we take the $L_{t,\omega}^{m}$-limit of \eqref{eq:Aeps}
as $\epsilon\rightarrow0$. First we fix $z,$ $a$, $r$ and $\omega$.
For the inner integral, we have
\begin{align*}
 & \int f_{r}^{\epsilon}(x,\xi)f_{r}(x-\epsilon z,\xi)D_{x}b(x-a\epsilon z,\xi)z\varphi_{r}(x)\,dxd\xi-\int f_{r}(x,\xi)^{2}\cdot D_{x}b(x,\xi)z\varphi_{r}(x)\,dxd\xi\\
 & =\int f_{r}^{\epsilon}(x,\xi)f_{r}(x-\epsilon z,\xi)(D_{x}b(x-a\epsilon z,\xi)-D_{x}b(x-\epsilon z,\xi))z\varphi_{r}(x)\,dxd\xi\\
 & \quad+\int f_{r}^{\epsilon}(x,\xi)(f_{r}(x-\epsilon z,\xi)D_{x}b(x-\epsilon z,\xi)-f_{r}(x,\xi)D_{x}b(x,\xi))z\varphi_{r}(x)\,dxd\xi\\
 & \quad+\int(f_{r}^{\epsilon}(x,\xi)-f_{r}(x,\xi))f_{r}(x,\xi)D_{x}b(x,\xi)z\varphi_{r}(x)\,dxd\xi
\end{align*}
The first addend on the right hand side above goes to $0$ for $\ve\to0$:
indeed both $D_{x}b(x-a\epsilon z,\xi)$ and $D_{x}b(x-\epsilon z,\xi)$
tend to $D_{x}b(x,\xi)$ in $L_{x,\xi}^{1}$ by continuity of translation
and $f^{\epsilon}(x,\xi)f(x-\epsilon z,\xi)\varphi(x)$ is bounded
in $L_{x,\xi}^{\infty}$ uniformly in $\epsilon$. The second addend
also goes to $0$ for $\ve\to0$: $f(x-\epsilon z,\xi)Db(x-a\epsilon z,\xi)$
tends to $f(x,\xi)D_{x}b(x,\xi)$ in $L_{x,\xi}^{1}$ by continuity
of translation and $f^{\epsilon}(x,\xi)\varphi(x)$ is bounded in
$L_{x,\xi}^{\infty}$ uniformly in $\epsilon$. Finally, the third
addend goes to $0$ by dominated convergence: $f^{\epsilon}(x,\xi)-f(x,\xi)$
tends to $0$ for a.e.~$(x,\xi)$ and the integrand is bounded by
$C|D_{x}b|(x,\xi)$, for some $C>0$. Therefore, for fixed $z$, $\eta$,
$r$ and $a$, the inner integral in the addend $A$ of \eqref{eq:comm11}
converges to $\int f_{r}^{2}(x,\xi)D_{x}b(x,\xi)z\vp_{r}(x)\,dxd\xi$.
Moreover this inner integral is bounded uniformly in $z$, $a$, $r$
and $\omega$, therefore dominated convergence implies for $A$
\begin{align*}
 & \lim_{\ve\to0}\Big(\lim_{\d\to0}\Big(\int_{0}^{1}\int\bar{\rho}(\eta)\nabla\rho(z)\cdot\int f_{r}^{\epsilon,\d}(x,\xi)f_{r}(x-\epsilon z,\xi-\d\eta)\cdot\\
 & \qquad\cdot D_{x}b(x-a\epsilon z,\xi)z\varphi_{r}(x)\,dxd\xi dzd\eta da\Big)\Big)\\
 & =\int_{0}^{1}\int\nabla\rho(z)\cdot\int f_{r}(x,\xi)^{2}\cdot D_{x}b(x,\xi)z\varphi_{r}(x)\,dxd\xi dzda\\
 & =-\int f_{r}(x,\xi)^{2}\div\,b(x,\xi)\varphi_{r}(x)\,dxd\xi,
\end{align*}
where the limits are taken in $L_{t,\omega}^{m}$ and we have used
that $\int\partial_{i}\rho(z)z_{j}\,dz=-\delta_{ij}$.

Similarly, for the second integral $B$ on the right hand side of
\eqref{eq:comm11} we have
\begin{align*}
 & \lim_{\ve\to0}\Big(\lim_{\d\to0}\Big(\int\bar{\rho}(\eta)\rho(z)\int f_{r}^{\epsilon,\d}(x,\xi)f_{r}(x-\epsilon z,\xi-\d\eta)\cdot\\
 & \qquad\cdot\div\,b(x-\epsilon z,\xi-\delta\eta)\varphi_{r}(x)\,dxd\xi dzd\eta\Big)\Big)\\
 & =\int f_{r}(x,\xi)^{2}\div\,b(x,\xi)\varphi_{r}(x)\,dxd\xi,
\end{align*}
where again the limits are taken in $L_{t,\omega}^{m}$. For the third
integral $C$, again with similar reasoning but now taking only the
limit $\delta\rightarrow0$, we get
\begin{align*}
 & \lim_{\delta\rightarrow0}\E\int_{0}^{T}\big|\frac{1}{\epsilon}\int\bar{\rho}(\eta)\nabla\rho(z)\cdot\\
 & \quad\cdot\int f^{\epsilon,\delta}(x,\xi)f(x-\epsilon z,\xi-\delta\eta)(b(x,\xi-\delta\eta)-b(x,\xi))\varphi(x)\,dxd\xi dzd\eta\big|^{m}\,dr=0.
\end{align*}
Putting together these limits we obtain the desired statement.
\end{proof}
Now we prove Lemma \ref{lem:eq_f2}.
\begin{proof}
[Proof of Lemma \ref{lem:eq_f2}]\textit{Step 1:} We start with the
equation for $|f|$. Since, by \eqref{eq:gen_kinetic_measure}, $|f|=f\sgn(\xi)$,
we aim to use $\sgn(\xi)$ as a test function in \eqref{eq:kinetic_measure}.
To do so, we regularize $\sgn$ via $\sgn*_{\xi}\bar{\rho}^{\delta}=:\sgn^{\delta}$.
Note that $\partial_{\xi}\sgn^{\delta}=2\bar{\rho}^{\delta}$. For
technical reasons that will become clear in the second step, we write
an equation for $\int_{\R^{d}\times\R}f^{\epsilon,\delta}\sgn^{\delta}(\xi)\varphi\,dxd\xi$
(where $f^{\epsilon,\delta}=f*_{x,\xi}(\rho^{\epsilon}\bar{\rho}^{\delta})$),
that is, we take $(\sgn^{\delta}(\xi)\varphi)*_{x,\xi}(\rho^{\epsilon}\bar{\rho}^{\delta})$
as a test function in \eqref{eq:kinetic_measure}. Moreover, again
in \eqref{eq:kinetic_measure} we take the càdlàg version of the integral
and thus get, for a.e. $\omega$ (on a full-measure set independent
of $t$), for every $t$,
\begin{align}
 & \int_{\R^{d}\times\R}f_{t}^{\epsilon,\delta,+}\sgn^{\delta}(\xi)\varphi\,dxd\xi\nonumber \\
 & =\int_{\R^{d}\times\R}f_{0}^{\epsilon,\delta}\sgn^{\delta}(\xi)\varphi_{0}\,dxd\xi+\int_{0}^{t}\int f^{\epsilon,\delta}\sgn^{\delta}(\xi)(\partial_{t}\varphi+\frac{1}{2}\Delta\varphi)\,dxd\xi dr\nonumber \\
 & +\int_{0}^{t}\int fb\cdot\nabla(\sgn^{\delta}(\xi)\varphi)^{\epsilon,\delta}\,dxd\xi dr+\int_{0}^{t}\int f\div\,b(\sgn^{\delta}(\xi)\varphi)^{\epsilon,\delta}\,dxd\xi dr\label{eq:first_step}\\
 & -\int_{0}^{t}\int f^{\epsilon,\delta}\sgn^{\delta}(\xi)\nabla\varphi\,dxd\xi dW_{r}-2\int_{[0,t]\times\R^{d}\times\R}\varphi\bar{\rho}^{\delta}(\xi)m^{\epsilon,\delta}\,dxd\xi dr,\nonumber 
\end{align}

where $f^{\epsilon,\delta,+}$ is the càdlàg version of $f^{\epsilon,\delta}$
and $m^{\epsilon,\delta}=m*_{x,\xi}(\rho^{\epsilon}\bar{\rho}^{\delta})$
(see Remark \ref{rem:version_eps}).

\textit{Step 2:} For $f^{2}$, we would like to take $f\varphi$ as
a test function in \eqref{eq:kinetic_measure}. Since $f$ is not
regular, we regularize it in both $x$ and $\xi$. More precisely
we take $f^{\epsilon,\delta,+}$, $f^{\epsilon,\delta,-}$ resp. càdlàg,
càglàd versions of $f^{\epsilon,\delta}$, as in Lemma \ref{lem:version_conv}
and Remark \ref{rem:version_eps}. Itô's formula for càdlàg processes
(cf.~\cite[Chapter II Theorem 33]{Pro2004}) yields
\begin{align*}
 & (f_{t}^{\epsilon,\delta,+}(x,\xi))^{2}\varphi_{t}(x)-(f_{0}^{\epsilon,\delta}(x,\xi))^{2}\varphi_{0}(x)\\
 & =\int_{0}^{t}f_{r}^{\epsilon,\delta,+}(x,\xi)^{2}\partial_{t}\varphi_{r}(x)\,dr+\int_{[0,t]}(f_{r}^{\epsilon,\delta,+}(x,\xi)+f_{r}^{\epsilon,\delta,-}(x,\xi))\varphi_{r}(x)\,df_{r}^{\epsilon,\delta}\\
 & \quad+\int_{0}^{t}\varphi_{r}(x)\,d[f^{\epsilon,\delta}]_{r}\\
 & =\int_{0}^{t}f_{r}^{\epsilon,\delta}(x,\xi)^{2}\partial_{t}\varphi_{r}(x)\,dr+\int_{0}^{t}f_{r}^{\epsilon,\delta}(x,\xi)\Delta f_{r}^{\epsilon,\delta}(x,\xi)\varphi_{r}(x)\,dr\\
 & \quad-\int_{0}^{t}2f_{r}^{\epsilon,\delta}(x,\xi)(b\cdot\nabla f_{r})^{\epsilon,\delta}(x,\xi)\varphi_{r}(x)\,dr-\int_{0}^{t}2f_{r}^{\epsilon,\delta}(x,\xi)\nabla f_{r}^{\epsilon,\delta}(x,\xi)\varphi_{r}(x)\,dW_{r}\\
 & \quad+\int_{[0,t]}(f_{r}^{\epsilon,\delta,+}(x,\xi)+f_{r}^{\epsilon,\delta,-}(x,\xi))\varphi_{r}(x)\partial_{\xi}m^{\ve,\d}(r,x,\xi)\,dxd\xi dr\\
 & \quad+\int_{0}^{t}|\nabla f_{r}^{\epsilon,\delta}(x,\xi)|^{2}\varphi_{r}(x)\,dr.
\end{align*}
This formula is valid for each $(x,\xi)$ for a.e.~$(t,\omega)$,
where the exceptional set may depend on $(x,\xi)$. However, by Remark
\ref{rem:version_eps}, for a fixed representative of $m$, the integral
with the measure $\partial_{\xi}m^{\ve,\d}$ is measurable in $(t,\omega,x,\xi)$,
càdlàg in $t$ for $(\omega,x,\xi)$ fixed and continuous in $(x,\xi)$
for $(t,\omega)$ fixed. Also the other integrals have versions that
are continuous in $(t,x,\xi)$ for $\omega$ fixed and, in particular,
are measurable in $(t,\omega,x,\xi)$. For such versions, for a.e.~$\omega$,
the above equality above holds for every $(t,x,\xi)$.

The idea at this point is first to integrate in $x$ and $\xi$, then
to use integration by parts to bring the derivatives onto $\varphi$
and thereby to get an equation for $f^{\epsilon,\delta,+}$ which
is similar to the one satisfied by $f$ itself, plus a remainder.
Indeed we integrate in $x$ and $\xi$ and use Remark \ref{rem:version_eps},
Fubini's theorem and the stochastic Fubini theorem: we obtain the
following equality, valid for every $t$ and for every $\omega$ in
a full-measure set independent of $t$, 
\begin{align*}
 & \int_{\R^{d}\times\R}(f_{t}^{\epsilon,\delta,+}(x,\xi))^{2}\varphi_{t}(x))\,dxd\xi-\int_{\R^{d}\times\R}(f_{0}^{\epsilon,\delta}(x,\xi))^{2}\varphi_{0}(x))\,dxd\xi\\
 & =\int_{0}^{t}\int_{\R^{d}\times\R}f_{r}^{\epsilon,\delta}(x,\xi)^{2}\left(\partial_{t}\varphi_{r}(x)+\frac{1}{2}\Delta\varphi_{r}(x)+\div(b(x,\xi)\varphi_{r}(x))\right)\,dxd\xi dr\\
 & +2\int_{0}^{t}\int_{\R^{d}\times\R}\int_{\R^{d}\times\R}f_{r}^{\epsilon,\delta}(x,\xi)f_{r}(y,\zeta)\bar{\rho}_{\delta}(\xi-\zeta)\Big(\nabla\rho_{\epsilon}(x-y)\cdot(b(x,\xi)-b(y,\zeta))\\
 & \quad+\rho_{\epsilon}(x-y)\div_{y}\,b(y,\zeta)\Big)\varphi_{r}(x)\,dyd\zeta dxd\xi dr\\
 & -\int_{[0,t]\times\R^{d}\times\R}(\partial_{\xi}f_{r}^{\epsilon,\delta,+}+\partial_{\xi}f_{r}^{\epsilon,\delta,-})(x,\xi)m^{\epsilon,\delta}(r,x,\xi)\varphi_{r}(x)\,dxd\xi dr\\
 & -\int_{0}^{t}\int_{\R^{d}\times\R}f_{r}^{\epsilon,\delta}(x,\xi)^{2}\nabla\varphi_{r}(x)\,dxd\xi dW_{r}.
\end{align*}

For the third addend, note that, for every $(x,\xi)$, it holds, for
a.e. $(t,\omega)$, $\partial_{\xi}f^{\epsilon,\delta}(x,\xi)=\bar{\rho}_{\delta}(\xi)-\nu*_{\xi}\bar{\rho}_{\delta}(\xi)$
(the convolution being in the $\xi$ direction) and so $\partial_{\xi}f^{\epsilon,\delta}(x,\xi)\ge\bar{\rho}_{\delta}(\xi)$.
Therefore, by the càdlàg/càglàd properties of $\partial_{\xi}f^{\epsilon,\delta,+}$
and $\partial_{\xi}f^{\epsilon,\delta,-}$, it holds for a.e.~$\omega$:
for every $(t,x,\xi)$, $\partial_{\xi}f^{\epsilon,\delta,+}(x,\xi)\ge\bar{\rho}_{\delta}(\xi)$
and $\partial_{\xi}f^{\epsilon,\delta,-}(x,\xi)\ge\bar{\rho}_{\delta}(\xi)$.
So we obtain
\begin{align*}
 & -\int_{[0,t]\times\R^{d}\times\R}(\partial_{\xi}f_{r}^{\epsilon,\delta,+}+\partial_{\xi}f_{r}^{\epsilon,\delta,-})(x,\xi)m^{\epsilon,\delta}(r,x,\xi)\varphi_{r}(x)\,dxd\xi dr\\
 & \le-2\int_{[0,t]\times\R^{d}\times\R}\bar{\rho}^{\delta}(\xi)m^{\epsilon,\delta}(r,x,\xi)\varphi_{r}(x)\,dxd\xi dr.
\end{align*}

Here we see the reason for the additional regularization of $f^{\epsilon,\d}$
in the first step: in this way the right hand side of the above inequality
is equal to the last term in formula \eqref{eq:first_step}. In conclusion
we get, for a.e. $\omega$ (on a full-measure set independent of $t$),
for every $t$,
\begin{align}
 & \int_{\R^{d}\times\R}(f_{t}^{\epsilon,\delta,+}\sgn^{\delta}(\xi)-(f_{t}^{\epsilon,\delta,+})^{2})\varphi_{t}\,dxd\xi-\int_{\R^{d}\times\R}(f_{0}^{\epsilon,\delta}\sgn^{\delta}(\xi)-(f_{0}^{\epsilon,\delta})^{2})\varphi_{0}\,dxd\xi\nonumber \\
 & \le\int_{0}^{t}\int f^{\epsilon,\delta}\sgn^{\delta}(\xi)(\partial_{t}\varphi_{r}+\frac{1}{2}\Delta\varphi_{r})\,dxd\xi dr\nonumber \\
 & +\int_{0}^{t}\int fb\cdot\nabla(\sgn^{\delta}(\xi)\varphi))^{\epsilon,\delta}\,dxd\xi dr+\int_{0}^{t}\int f\div\,b(\sgn^{\delta}(\xi)\varphi)^{\epsilon,\delta}\,dxd\xi dr\nonumber \\
 & -\int_{0}^{t}\int_{\R^{d}\times\R}(f^{\epsilon,\delta})^{2}(\partial_{t}\varphi+\frac{1}{2}\Delta\varphi+\div(b\varphi))\,dxd\xi dr\label{eq:ineq_eps}\\
 & -\int_{0}^{t}\int_{\R^{d}\times\R}(f^{\epsilon,\delta}\sgn^{\delta}(\xi)-(f^{\epsilon,\delta})^{2})\nabla\varphi\,dxd\xi dW_{r}\nonumber \\
 & -2\int_{0}^{t}\int_{\R^{d}\times\R}\int_{\R^{d}\times\R}f_{r}^{\epsilon,\delta}(x,\xi)f_{r}(y,\zeta)\bar{\rho}_{\delta}(\xi-\zeta)(\nabla\rho_{\epsilon}(x-y)\cdot(b(x,\xi)-b(y,\zeta))\nonumber \\
 & \qquad+\rho^{\epsilon}(x-y)\div_{y}\,b(y,\zeta))\varphi_{r}(x)\,dyd\zeta dxd\xi dr.\nonumber 
\end{align}

\textit{Step 3}: The last addend in the right hand side above is the
commutator error, which by Lemma \ref{lem:commutator} goes to zero
in $L_{t,\omega}^{2}$ letting first $\delta\rightarrow0$ and then
$\epsilon\rightarrow0$. Therefore, taking the $L_{t,\omega}^{2}$-limit
in \eqref{eq:ineq_eps} first for $\delta\rightarrow0$ then for $\epsilon\rightarrow0$,
we obtain the statement.
\end{proof}
We are ready to prove the key Lemma \ref{lem:key_lemma}.
\begin{proof}
[Proof of Lemma \ref{lem:key_lemma}]By Lemma \ref{lem:eq_f2} we
have, for every nonnegative test function $\varphi$ in $C_{c}^{\infty}([0,T]\times\R^{d})$
independent of $\xi$, for a.e. $t$ (with the exceptional set possibly
depending on $\varphi$),
\begin{align}
\E\int_{\R^{d}\times\R}(|f_{t}|-f_{t}^{2})\varphi_{t}\,dxd\xi & \le\int_{\R^{d}\times\R}(|f_{0}|-f_{0}^{2})\varphi_{0}\,dxd\xi\nonumber \\
 & +\E\int_{0}^{t}\int_{\R^{d}\times\R}[\partial_{t}\varphi+\frac{1}{2}\Delta\varphi+\div(b(x,\xi)\varphi)](|f|-f^{2})\,dxd\xi dr;\label{eq:ineq_gen_sol}
\end{align}
here we used that $\int_{0}^{t}\int_{\R^{d}\times\R}\nabla\varphi(|f|-f^{2})\,dxd\xi dW_{r}$
is an $L^{2}$ martingale with zero mean, since $\nabla\varphi(|f|-f^{2})$
is bounded and compactly supported.

The idea at this point is to use duality; that is, we would like to
take a test function $\varphi$, independent of $\xi$, nonnegative
and sufficiently regular, with $\varphi_{T}>0$, such that, for every
$\xi$ in a bounded interval $[-R,R]$,
\begin{align}
 & \partial_{t}\varphi+\frac{1}{2}\Delta\varphi+\div(b(x,\xi)\varphi)\le C.\label{PDE phi ineq-2}
\end{align}
Then we could conclude by Gronwall's inequality. To do so, the strategy
is as follows. First we take $\varphi$ as a nonnegative solution
to
\begin{align*}
 & \partial_{t}\varphi+\frac{1}{2}\Delta\varphi+F(x)\varphi=0\,,\hspace{1cm}\varphi(t_{fin},x)=1,
\end{align*}
with $F(x)=\|\div\,b(x,\cdot)\|_{L_{\xi,B_{R}}^{\infty}}$ (measurable
function), $t_{fin}$ a given time and $R$ such that the support
of $f$ is in $[0,T]\times\Omega\times\R^{d}\times[-R,R]$. Then we
use a bound on the transport term $b\cdot\nabla\varphi$ to obtain
\eqref{PDE phi ineq-2}.

For technical reasons, we take, for $\epsilon,t_{fin}>0$ fixed, $\varphi^{\epsilon}$
to be a solution on $[0,t_{fin}]$ to
\begin{align}
 & \partial_{t}\varphi^{\epsilon}+\frac{1}{2}\Delta\varphi^{\epsilon}+F^{\epsilon}\varphi^{\epsilon}=0\,,\hspace{1cm}\varphi^{\epsilon}(t_{fin},x)=\psi_{1/\epsilon}(x);\label{eq:PDE_dual}
\end{align}
here $\psi_{1/\epsilon}$ is a $C_{c}^{\infty}$ nonnegative function,
with values in $[0,1]$, equal to $1$ on $B_{1/\epsilon}(0)$ and
uniformly bounded (in $\epsilon$) in the $W^{1,\infty}(\R^{d})$
norm; $F^{\epsilon}$ is a compactly supported regularization of $F$,
converging to $F$ a.e. and in $L^{p}$, if $p<\infty$, or a.e. and
with uniform $L^{\infty}$ bound, if $p=\infty$. We extend $\varphi^{\epsilon}$
to the whole interval $[0,T]$ by taking $\varphi^{\epsilon}(t,x)=\psi_{1/\epsilon}(x)$
for $t\in[t_{fin},T]$. By Remark \ref{rmk:W2_reg} below, $\varphi^{\epsilon}$
is nonnegative and in $L_{t}^{\infty}(W_{x}^{2,\infty})\cap L_{x}^{\infty}(W_{t}^{1,\infty})$
for every $\epsilon>0$. Therefore, reasoning as in Remark \ref{rmk:enlarged_test},
$\varphi^{\epsilon}$ can be used as test function in \eqref{eq:ineq_gen_sol}.
Consequently, we have, for a.e. $t\le t_{fin}$, with the exceptional
set $N^{\epsilon,t_{fin}}$ possibly depending on $\epsilon$ and
$t_{fin}$,
\begin{align}
 & \E\int\varphi_{t}^{\epsilon}(|f_{t}|-f_{t}^{2})\,dxd\xi\nonumber \\
 & \le\int_{\R^{d}\times\R}\varphi_{0}^{\epsilon}(|f_{0}|-f_{0}^{2})\,dxd\xi\nonumber \\
 & \ +\int_{0}^{t}\E\int\big[\partial_{t}\varphi^{\epsilon}+\frac{1}{2}\Delta\varphi^{\epsilon}+F^{\epsilon}\varphi^{\epsilon}\big](|f|-f^{2})\,dxd\xi dr\nonumber \\
 & \ +\int_{0}^{t}\E\int[b\cdot\nabla\varphi^{\epsilon}+(\div\,b)\varphi^{\epsilon}-F\varphi^{\epsilon}](|f|-f^{2})\,dxd\xi dr\nonumber \\
 & \ +\int_{0}^{t}\E\int[F\varphi^{\epsilon}-F^{\epsilon}\varphi^{\epsilon}](|f|-f^{2})\,dxd\xi dr\label{eq:ineq_gen_sol_eps}\\
 & \le\int_{0}^{t}\E\int(b\cdot\nabla\varphi^{\epsilon})(|f|-f^{2})\,dxd\xi dr\nonumber \\
 & \ +\int_{0}^{t}\E\int(F-F^{\epsilon})\varphi^{\epsilon}(|f|-f^{2})\,dxd\xi dr,\nonumber 
\end{align}
where we have used that $|f|-f^{2}\ge0$ and that $f$ is supported
on $[0,T]\times\Omega\times\R^{d}\times[-R,R]$.

Before passing to the limit $\epsilon\rightarrow0$, we aim to replace
$t$ by $t_{fin}$ in the above inequality. This is not immediate,
since the function $t\mapsto\E[|f_{t}|-f_{t}^{2}]$ is not known to
be (even weakly) continuous. To overcome this difficulty, we fix a
version of the map $[0,T]\rightarrow L^{1}(\R^{d}\times\R)$ given
by $t\mapsto\E[|f_{t}|-f_{t}^{2}]$ and we use Lusin's theorem for
separable Banach space-valued functions, see for example Loeb, Talvila
\cite{LoeTal2004}: for every $\delta>0$, there exists a measurable
set $A_{\delta}\subseteq[0,T]$ with Lebesgue measure $|A_{\delta}|\ge T-\delta$,
such that $t\mapsto E[|f_{t}|-f_{t}^{2}]$ is continuous on $A_{\delta}$
as an $L^{1}(\R^{d}\times\R)$-valued map. We can also assume that
$A_{\delta}$ has no points which are isolated from the left (here
we say that $t_{0}$ is isolated from the left in $A_{\delta}$ if
$(t_{0}-\eta,t_{0})\cap A_{\delta}=\emptyset$ for some $\eta>0$):
indeed, the set of points of $A_{\delta}$ which are isolated from
the left is at most countable and thus has zero Lebesgue measure.
Therefore, for $t_{fin}\in A_{\delta}$, we can find a sequence $t_{n}\le t_{fin}$
in $A_{\delta}\setminus N^{\epsilon,t_{fin}}$ converging to $t_{fin}$
(as $n\rightarrow\infty$) and such that \eqref{eq:ineq_gen_sol_eps}
holds for $t_{n}$ and $\E[|f_{t_{n}}|-f_{t_{n}}^{2}]\rightarrow\E[|f_{t_{fin}}|-f_{t_{fin}}^{2}]$
in $L^{1}(\R^{d}\times\R)$. Moreover, by Remark \ref{rmk:W2_reg}
$\varphi^{\epsilon}$ is in $L_{x}^{\infty}(W_{t}^{1,\infty})$ and
so the map $[0,T]\rightarrow L^{\infty}(\R^{d}\times\R)$ given by
$t\mapsto\varphi_{t}^{\epsilon}$ is continuous. Hence, by Hölder's
inequality,
\begin{align*}
\E\int\varphi_{t_{n}}^{\epsilon}(|f_{t_{n}}|-f_{t_{n}}^{2})\,dxd\xi\rightarrow\E\int\varphi_{t_{fin}}^{\epsilon}(|f_{t_{fin}}|-f_{t_{fin}}^{2})\,dxd\xi.
\end{align*}
Since the right hand side of \eqref{eq:ineq_gen_sol_eps} is continuous
in time, we can pass to the limit in \eqref{eq:ineq_gen_sol_eps}
for $t_{n}\to t_{fin}$ and obtain \eqref{eq:ineq_gen_sol_eps} for
$t_{fin}\in A_{\delta}$. Since this is true for any $\delta>0$,
we obtain \eqref{eq:ineq_gen_sol_eps} for a.e. $t=t_{fin}$.

Now we let $\epsilon$ go to $0$. By Lemma \ref{lem:PDE_est}, applied
to the backward PDE \eqref{eq:PDE_dual}, and the uniform bound on
$F^{\epsilon}$ in $L^{p}$, we have a uniform (in $\epsilon$) bound
on $\|\varphi^{\epsilon}\|_{L_{t}^{\infty}(W_{x}^{1,\infty})}$. Therefore,
we can bound the first addend of the right hand side in \eqref{eq:ineq_gen_sol_eps}
by
\begin{align}
 & \limsup_{\epsilon\rightarrow0}\int_{0}^{t_{fin}}\E\int(b\cdot\nabla\varphi^{\epsilon})(|f|-f^{2})\,d\xi dx\nonumber \\
 & \le\|b\|_{L_{\xi,[-R,R]}^{\infty}(L_{x}^{\infty})}\sup_{\epsilon}\|\varphi^{\epsilon}\|_{L_{t}^{\infty}(W_{x}^{1,\infty})}\int_{0}^{t_{fin}}\E\int(|f|-f^{2})\,dxd\xi dr\nonumber \\
 & \le C\int_{0}^{t_{fin}}\E\int(|f|-f^{2})\,dxd\xi dr.\label{eq:b_bdd}
\end{align}
Concerning the second addend in \eqref{eq:ineq_gen_sol_eps}, in the
case $p<\infty$, $F-F^{\epsilon}$ converges to $0$ in $L_{x}^{p}$
and thus in $L^{p}([0,T]\times\Omega\times\R^{d}\times[-R,R])$; $\varphi^{\epsilon}$
is uniformly bounded in $L_{t,x}^{\infty}$ and $(|f|-f^{2})$ is
in $L^{p'}([0,T]\times\Omega\times\R^{d}\times[-R,R])$ by Remark
\ref{rmk:enlarged_test}. Therefore, by Hölder's inequality,
\begin{align*}
\limsup_{\epsilon\rightarrow0}\int_{0}^{t_{fin}}\E\int(F-F^{\epsilon})\varphi^{\epsilon}(|f|-f^{2})\,dxd\xi dr & =0.
\end{align*}
In the case $p=\infty$ we get the same result: here we exploit (via
dominated convergence theorem) the a.e.~convergence to $0$ and the
uniform bound of $\varphi^{\epsilon}(F-F^{\epsilon})$ and the fact
that $(|f|-f^{2})$ is in $L^{1}([0,T]\times\Omega\times\R^{d}\times[-R,R])$.
Finally, concerning the initial condition, using again the uniform
bound from Lemma \ref{lem:PDE_est} we get $\int_{\R^{d}\times\R}\varphi_{0}^{\epsilon}(|f_{0}|-f_{0}^{2})\,dxd\xi\le C\int_{\R^{d}\times\R}(|f_{0}|-f_{0}^{2})\,dxd\xi$.
Putting all together we have, for a.e. $t_{fin}>0$,
\begin{align*}
\E\int(|f_{t_{fin}}|-f_{t_{fin}}^{2})\,d\xi dx\le & C\int_{\R^{d}\times\R}(|f_{0}|-f_{0}^{2})\,dxd\xi\\
 & +C\int_{0}^{t_{fin}}\E\int(|f|-f^{2})\,d\xi dx\,dr.
\end{align*}
We conclude by Gronwall's lemma for discontinuous functions (cf.~Ethier,
Kurtz \cite[Theorem 5.1 in the Appendix]{EthKur1986}) that, for a.e.~$t\in[0,T]$,
\begin{align*}
\E\int(|f_{t}|-f_{t}^{2})\,d\xi dx & \le C\int_{\R^{d}\times\R}(|f_{0}|-f_{0}^{2})\,dxd\xi,
\end{align*}
where $C$ is a constant that depends only on the bound \eqref{eq:b_bdd}
and on the a priori estimates in Lemma \ref{lem:PDE_est}, applied
to the backward PDE \eqref{eq:PDE_dual}. Therefore, $C$ depends
only on $T$, $\|b\|_{L_{\xi,[-R,R]}^{\infty}(L_{x}^{\infty})}$ and
$\|\div\,b\|_{L_{x}^{p}(L_{\xi,[-R,R]}^{\infty})}$. The proof is
complete.
\end{proof}
Finally we prove Proposition \ref{prop:reconstruction}.
\begin{proof}
[Proof of Proposition \ref{prop:reconstruction}]Since $f_{0}$ takes
values in $\{0,\pm1\}$, we have $|f_{0}|-f_{0}^{2}=0$. Therefore,
Lemma \ref{lem:key_lemma} implies $f^{2}-|f|=0$ a.s.~(recall $|f|\le1$
by definition) and thus $f$ takes values in $\{0,\pm1\}$ for a.e.~$(t,\omega,x,\xi)$.
We then define $u(t,\omega,x):=\int_{\R}f(t,\omega,x,\xi)\,d\xi$.
Note that $u$ is well-defined since $f$ is compactly supported in
$\xi$ and measurable by Fubini's theorem.

Now we claim that, for every $h>0$, for a.e. $(t,\omega,x,\xi)$,
\begin{align}
 & (f(t,\omega,x,\xi)-f(t,\omega,x,\xi+h))(1_{-\infty<\xi<-h}+1_{h<\xi<+\infty})\ge0,\label{eq:f_nonincr}\\
 & f(t,\omega,x,\xi)-f(t,\omega,x,\xi+h)+1\ge0.\label{eq:f_jump}
\end{align}
Leaving the proof of these inequality for later, we use them to conclude.
Since the pushforward of the Lebesgue measure via the map $(\xi,h)\mapsto(\xi,\xi+h)$
is equivalent to the Lebesgue measure, the two inequalities above
imply, for a.e.\ $(t,\omega,x,\xi,\eta)$,
\begin{align}
 & (f(t,\omega,x,\xi)-f(t,\omega,x,\eta))(1_{\xi<\eta<0}+1_{0<\xi<\eta})\ge0,\label{eq:f_nonincr_1}\\
 & (f(t,\omega,x,\xi)-f(t,\omega,x,\eta)+1)1_{\xi<\eta}\ge0.\label{eq:f_jump_1}
\end{align}
Now we fix a version of $f$ and we consider, for fixed $(t,\omega,x)$,
the set $A=A(t,\omega,x)=\{\xi<0:\,(f(t,\omega,x,\xi)-f(t,\omega,x,\eta))\sgn(\xi-\eta)\le0\text{ for a.e. }\eta<0\}$.
By Fubini's theorem, \eqref{eq:f_nonincr_1} implies that, for a.e.\ $(t,\omega,x)$,
$A(t,\omega,x)$ is a full-measure set on $(-\infty,0)$. Moreover,
for any $(t,\omega,x)$, $f$ is non-increasing on $A(t,\omega,x)$.
Indeed, if this would not be true, we could find $\xi<\eta$ in $A$
with $f(t,\omega,x,\xi)-f(t,\omega,x,\eta)<0$. Thus, since $f(t,\omega,x,\xi)-f(t,\omega,x,\zeta)\ge0$
for a.e.~$\zeta>\xi$, we obtain $f(t,\omega,x,\zeta)-f(t,\omega,x,\eta)<0$
for a.e.~$\zeta\in(\xi,\eta)$, in contradiction to $\eta\in A$.
Similarly, for a.e.\ $(t,\omega,x)$, $B(t,\omega,x)=\{\xi>0:\,(f(t,\omega,x,\xi)-f(t,\omega,x,\eta))\sgn(\xi-\eta)\le0\text{ for a.e. }\eta>0\}$
is a full-measure set on $(0,+\infty)$ on which $f$ is non-increasing.
Since $f$ is compactly supported in $\xi$ and takes values a.e.\ in
$\{0,\pm1\}$, we conclude for a.e.\ $(t,\omega,x)$, $f=-1_{\{a<\xi<0\}}+1_{\{0<\xi<b\}}$
for some $a\le0\le b$ (depending on $(t,\omega,x)$) on the full-measure
set $A(t,\omega,x)\cup B(t,\omega,x)$. By \eqref{eq:f_jump_1} this
yields that either $f=-1_{\{a<\xi<0\}}$ a.e.\ or $f=-1_{\{0<\xi<b\}}$
a.e.~and thus $f=\chi(u)$ a.e.. Progressive measurability of $u$
follows from the respective property of $f=\chi(u)$, by Remark \ref{rmk:meas_u_chi}.

In remains to prove the claim above, that is, \eqref{eq:f_nonincr}
and \eqref{eq:f_jump}. To prove \eqref{eq:f_nonincr} we take a nonnegative
test function $\psi$ in $C_{c}^{\infty}([0,T]\times\R^{d}\times\R)$
with support contained in $(-\infty,-h)$. We call $\varphi$ the
function such that $\psi=-\partial_{\xi}\varphi$ and that $\varphi(-a)=0$
for $a$ large enough; $\varphi$ is a nonpositive nonincreasing function,
constant on $[-h,+\infty).$ We then have by \eqref{eq:gen_kinetic_measure},
for a.e.~$\omega$,
\begin{align*}
\int & (f(t,x,\xi)-f(t,x,\xi+h))\psi(t,x,\xi)\,dxd\xi dt\\
 & =\int f(t,x,\xi)(\psi(t,x,\xi)-\psi(t,x,\xi-h))\,dxd\xi dt\\
 & =\int(\varphi(t,x,0)-\varphi(t,x,-h))\,dxdt-\int(\varphi(t,x,\xi)-\varphi(t,x,\xi-h))\,\nu(dx,d\xi,dt)\\
 & =-\int(\varphi(t,x,\xi)-\varphi(t,x,\xi-h))\,\nu(dx,d\xi,dt)\ge0.
\end{align*}
This proves that $f(t,\omega,x,\xi)-f(t,\omega,x,\xi+h)\ge0$ on $\{-\infty<\xi<-h\}$;
similarly for $\{h<\xi<+\infty\}$. This proves \eqref{eq:f_nonincr}.

For \eqref{eq:f_jump}, we take a nonnegative test function $\psi$
in $C_{c}^{\infty}([0,T]\times\R^{d}\times\R)$ and we call $\varphi$
the nonpositive, nonincreasing function such that $\psi=-\partial_{\xi}\varphi$
and that $\varphi(-a)=0$ for $a$ large enough. Again we have by
\eqref{eq:gen_kinetic_measure}
\begin{align*}
\int & (f(t,x,\xi)-f(t,x,\xi+h)+1)\psi(t,x,\xi)\,dxd\xi dt\\
 & =\int f(t,x,\xi)(\psi(t,x,\xi)-\psi(t,x,\xi-h))\,dxd\xi dt+\int(\int\psi(t,x,\xi)\,d\xi)\,dxdt\\
 & =\int(\varphi(t,x,0)-\varphi(t,x,-h))\,dxdt-\int(\varphi(t,x,\xi)-\varphi(t,x,\xi-h))\,\nu(dx,d\xi,dt)\\
 & \quad+\int(\varphi(t,x,-R_{1})-\varphi(t,x,R_{1}))\,dxdt,
\end{align*}
for some $R_{1}$ such that the support of $\psi$ is contained in
$[0,T]\times\R^{d}\times[-R_{1},R_{1}]$. Now the monotonicity property
of $\varphi$ gives that $\varphi(t,x,\xi)-\varphi(t,x,\xi-h)\le0$
for every $\xi$, and also $\varphi(t,x,-R_{1})-\varphi(t,x,-h)\ge0$
and $\varphi(t,x,0)-\varphi(t,x,R_{1})\ge0$. Therefore, the right
hand side of the formula above is $\ge0$. This proves \eqref{eq:f_jump}
and concludes the proof of the claim.
\end{proof}

\section{Appendix A: a priori estimates on parabolic PDEs\label{sec:PDE}}

In this section we provide a priori estimates for a linear parabolic
PDE on $\R^{d}$ of the form
\begin{align}
\partial_{t}\varphi=\frac{1}{2}\Delta\varphi+g\varphi+h\varphi\,,\label{PDE eq}
\end{align}
where $g\in L_{x}^{p}$ for some finite $p>d$ and $h\in L_{x}^{\infty}$
and the initial datum $\varphi_{0}$ is nonnegative. Since we are
interested in a priori estimates in this section, we suppose that
$g$, $h$ and $\varphi_{0}$ are smooth and compactly supported.
The estimates can be applied also to the backward PDE, by a change
of time. The methods used in this section are essentially classical,
see for example (among many other references) Krylov \cite{Kry2008}
for heat kernel estimates in $L^{p}$ spaces and Fedrizzi, Flandoli
\cite{FedFla2011} for estimates on Kolmogorov-type PDEs.
\begin{rem}
\label{rmk:W2_reg}The existence of a nonnegative solution $\varphi$
in $L_{t}^{\infty}(W_{x}^{2,\infty})$ to \eqref{PDE eq} in the case
of smooth compactly supported coefficients and nonnegative initial
datum is ensured, for example, by the representation formula 
\begin{equation}
\varphi(t,x)=\E[\exp[\int_{0}^{t}(g(x+W_{r}-W_{t})+h(x+W_{r}-W_{t}))\,dr]\varphi_{0}(x-W_{t})],\label{eq:FK}
\end{equation}
where the expectation $\E$ and Brownian motion $W$ are defined on
some generic probability space, not related to the one used before.
The equation also implies, again for smooth compactly supported data,
that such a solution is in $L_{x}^{\infty}(W_{t}^{1,\infty})$.
\end{rem}

We start by recalling the regularizing properties of the heat kernel,
of easy (and classical) proof:
\begin{lem}
Let $p_{t}(x)=t^{-d/2}p_{1}(t^{-1/2}x)$ be the heat kernel on $\R^{d}$,
i.e.~$p_{1}(x)=(2\pi)^{-d/2}e^{-|x|^{2}/2}$. Then we have, for $m\in[1,\infty],$
\begin{align*}
\|p_{t}\|_{L_{x}^{m}}\le C_{m,d}t^{-(d-d/m)/2}\quad\text{and}\quad & \|\nabla p_{t}\|_{L_{x}^{m}}\le C_{m,d}t^{-(1+d-d/m)/2}.
\end{align*}
\end{lem}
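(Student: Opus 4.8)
The plan is to establish both bounds by a pure scaling argument, reducing each to the finiteness of the $L^m$ norm of the fixed Gaussian $p_1$ (respectively of its gradient). No regularity theory is needed: the self-similar structure $p_t(x)=t^{-d/2}p_1(t^{-1/2}x)$ does all the work, and the two estimates are nothing more than the behaviour of $L^m$ norms under the dilation $x\mapsto t^{-1/2}x$.

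First I would treat $\|p_t\|_{L^m_x}$ for finite $m$. Writing $\|p_t\|_{L^m}^m=\int_{\R^d}t^{-dm/2}|p_1(t^{-1/2}x)|^m\,dx$ and substituting $y=t^{-1/2}x$, so that $dx=t^{d/2}\,dy$, collapses the integral to $t^{-dm/2+d/2}\|p_1\|_{L^m}^m$. Taking $m$-th roots gives $\|p_t\|_{L^m}=t^{-(d-d/m)/2}\|p_1\|_{L^m}$, which is the claimed bound with $C_{m,d}=\|p_1\|_{L^m}$; this constant is finite because $p_1$ is Gaussian. The endpoint $m=\infty$ is even simpler, since $\|p_t\|_{L^\infty}=t^{-d/2}\|p_1\|_{L^\infty}=t^{-d/2}(2\pi)^{-d/2}$, which matches the exponent $-(d-d/m)/2$ evaluated at $m=\infty$.

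For the gradient the only new observation is that differentiating the self-similar form produces an extra factor $t^{-1/2}$, namely $\nabla p_t(x)=t^{-(d+1)/2}(\nabla p_1)(t^{-1/2}x)$. The same substitution $y=t^{-1/2}x$ then yields $\|\nabla p_t\|_{L^m}^m=t^{-(d+1)m/2+d/2}\|\nabla p_1\|_{L^m}^m$, whence $\|\nabla p_t\|_{L^m}=t^{-(1+d-d/m)/2}\|\nabla p_1\|_{L^m}$. Here $C_{m,d}=\|\nabla p_1\|_{L^m}$ is finite for every $m\in[1,\infty]$, because $\nabla p_1(x)=-x\,p_1(x)$ is a Gaussian multiplied by a first-degree polynomial and is therefore both bounded and integrable to every finite power; the $m=\infty$ endpoint is again read off directly.

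There is essentially no obstacle: the statement is precisely the scaling invariance of $L^m$ norms under the heat kernel's self-similar rescaling, and the only points deserving a word are the finiteness of the two Gaussian constants $\|p_1\|_{L^m}$ and $\|\nabla p_1\|_{L^m}$, together with the consistency of the exponent formulas at the endpoint $m=\infty$, all of which are immediate.
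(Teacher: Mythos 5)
Your proof is correct and follows essentially the same route as the paper's: both rest on the change of variables $y=t^{-1/2}x$ applied to the self-similar form of $p_t$ and of $\nabla p_t=t^{-(1+d)/2}\nabla p_1(t^{-1/2}\cdot)$, with the case $m=\infty$ read off directly. The only difference is cosmetic: the paper writes out just the gradient bound and declares the first similar, while you write both.
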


\begin{proof}
We only prove the second inequality, the proof of the first one being
similar. The case $m=\infty$ is obvious, thus let $m\in[1,\infty).$
Note that $\nabla p_{t}(x)=t^{-(1+d)/2}\nabla p_{1}(t^{-1/2}x)$.
By the change of variable $y=t^{-1/2}x$, we get
\begin{align*}
\int_{\R^{d}}|\nabla p_{t}(x)|^{m}\,dx & =t^{-(1+d)m/2}t^{d/2}\int_{\R^{d}}|\nabla p_{1}(y)|^{m}\,dy=t^{-m(1+d-d/m)/2}\|\nabla p_{1}\|_{L_{x}^{m}}^{m},
\end{align*}
which is the desired estimates.
\end{proof}
We write the PDE \eqref{PDE eq} using the variational formulation:
\begin{align*}
\varphi_{t}= & p_{t}*\varphi_{0}+\int_{0}^{t}p_{t-s}*(g\varphi_{s})\,ds+\int_{0}^{t}p_{t-s}*(h\varphi_{s})\,ds.
\end{align*}

\begin{lem}
\label{lem:PDE_est}There exists a locally bounded function $c=c(T,\|g\|_{L_{x}^{p}},\|h\|_{L_{x}^{\infty}})$
such that, for every $\varphi_{0}$ in $C_{c}^{\infty}$, it holds
\begin{align*}
\|\varphi_{t}\|_{W_{x}^{1,\infty}} & \le\|\varphi_{0}\|_{W_{x}^{1,\infty}}c(T,\|g\|_{L_{x}^{p}},\|h\|_{L_{x}^{\infty}}).
\end{align*}
\end{lem}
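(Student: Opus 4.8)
The plan is to work directly from the variational (Duhamel) formulation displayed just above, bounding $\|\varphi_t\|_{L^\infty_x}$ and $\|\nabla\varphi_t\|_{L^\infty_x}$ separately by means of Young's convolution inequality together with the heat kernel estimates of the previous lemma. The guiding principle is to place every spatial derivative onto the kernel $p_{t-s}$ rather than onto $g$, so that no regularity of $g$ is ever required, and to check that all exponents arising in the time integrals stay strictly below $1$; this last point is precisely where the hypothesis $p>d$ enters.

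First, for the $L^\infty$ bound, I would apply Young's inequality to each term of
\[
\varphi_t = p_t*\varphi_0 + \int_0^t p_{t-s}*(g\varphi_s)\,ds + \int_0^t p_{t-s}*(h\varphi_s)\,ds.
\]
The first term is controlled by $\|p_t\|_{L^1_x}\|\varphi_0\|_{L^\infty_x}=\|\varphi_0\|_{L^\infty_x}$. For the $g$-term, writing $p'=p/(p-1)$ and using $\|g\varphi_s\|_{L^p_x}\le\|g\|_{L^p_x}\|\varphi_s\|_{L^\infty_x}$ together with $\|p_{t-s}\|_{L^{p'}_x}\le C(t-s)^{-d/(2p)}$ (since $d-d/p'=d/p$), one obtains the kernel $(t-s)^{-d/(2p)}$ with $d/(2p)<1/2<1$, hence in $L^1_{loc}$. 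The $h$-term yields the bounded kernel $\|h\|_{L^\infty_x}$. Collecting,
\[
\|\varphi_t\|_{L^\infty_x}\le\|\varphi_0\|_{L^\infty_x} + C\int_0^t\big(\|g\|_{L^p_x}(t-s)^{-d/(2p)}+\|h\|_{L^\infty_x}\big)\|\varphi_s\|_{L^\infty_x}\,ds,
\]
and, the kernel being locally integrable, the generalized (weakly singular) Gronwall inequality gives $\sup_{t\le T}\|\varphi_t\|_{L^\infty_x}\le c_1(T,\|g\|_{L^p_x},\|h\|_{L^\infty_x})\|\varphi_0\|_{L^\infty_x}$.

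For the gradient, I would differentiate the Duhamel formula, moving the derivative onto the kernel:
\[
\nabla\varphi_t = p_t*\nabla\varphi_0 + \int_0^t \nabla p_{t-s}*(g\varphi_s)\,ds + \int_0^t \nabla p_{t-s}*(h\varphi_s)\,ds.
\]
The first term is bounded by $\|\nabla\varphi_0\|_{L^\infty_x}$. For the $g$-term, Young's inequality and $\|\nabla p_{t-s}\|_{L^{p'}_x}\le C(t-s)^{-(1/2+d/(2p))}$ produce the kernel $(t-s)^{-(1/2+d/(2p))}$, integrable precisely because $1/2+d/(2p)<1\iff p>d$; the $h$-term carries $\|\nabla p_{t-s}\|_{L^1_x}\le C(t-s)^{-1/2}$, also integrable. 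Since no gradient of $\varphi$ appears on the right, I can bound both integrals directly by the already established $\sup_{s\le T}\|\varphi_s\|_{L^\infty_x}$, giving $\|\nabla\varphi_t\|_{L^\infty_x}\le\|\nabla\varphi_0\|_{L^\infty_x}+c_2(T,\|g\|_{L^p_x},\|h\|_{L^\infty_x})\sup_{s\le T}\|\varphi_s\|_{L^\infty_x}$. Combining the two estimates and using $\|\varphi_0\|_{L^\infty_x}\le\|\varphi_0\|_{W^{1,\infty}_x}$ yields the claimed bound.

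The main obstacle is the $g$-term in the gradient estimate: its kernel carries the exponent $1/2+d/(2p)$, and the argument breaks down unless this is strictly less than $1$. This integrability threshold is exactly the assumption $p>d$, so verifying it (rather than any delicate analysis) is the crux; a secondary technical point is invoking the correct weakly singular Gronwall lemma, for which one bounds the constant $h$-kernel by $C(1+(t-s)^{-d/(2p)})$ so as to fit a single $L^1_{loc}$ kernel.
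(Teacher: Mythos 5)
Your proposal is correct and follows essentially the same route as the paper: the same Duhamel decomposition, the same Young/heat-kernel exponents $(t-s)^{-d/(2p)}$ and $(t-s)^{-(1+d/p)/2}$, and the same observation that $p>d$ is exactly the integrability threshold for the gradient term. The only (immaterial) difference is in closing the $L^\infty$ estimate: you invoke a weakly singular Gronwall lemma directly, whereas the paper first applies H\"older's inequality in time (using that $(t-s)^{-d/(2p)}$ is locally in $L^2$) and then the standard Gronwall inequality for $\|\varphi_t\|_{L^\infty}^2$.
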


\begin{proof}
Here $C$ denotes any positive constant, which can change from line
to line, possibly depending on $T$, $p$ and $d$. We start with
the $L_{x}^{\infty}$ estimate. Using Young's inequality for convolutions
we get
\begin{align*}
\|\varphi_{t}\|_{L^{\infty}} & \le\|p_{t}*\varphi_{0}\|_{L^{\infty}}+\int_{0}^{t}\|p_{t-s}*(g\varphi_{s})\|_{L^{\infty}}\,ds+\int_{0}^{t}\|p_{t-s}*(h\varphi_{s})||_{L^{\infty}}\,ds\\
 & \le C\|\varphi_{0}\|_{L^{\infty}}+C\int_{0}^{t}(t-s)^{-d/2p}\|g\varphi_{s}\|_{L^{p}}\,ds+C\int_{0}^{t}\|h\varphi_{s}\|_{L^{\infty}}\,ds\\
 & \le C\|\varphi_{0}\|_{L^{\infty}}+C\int_{0}^{t}(t-s)^{-d/2p}\|g\|_{L^{p}}\|\varphi_{s}\|_{L^{\infty}}\,ds+C\int_{0}^{t}\|h\|_{L^{\infty}}\|\varphi_{s}\|_{L^{\infty}}\,ds.
\end{align*}
Since $p>d$, $(t-s)^{-d/2p}$ is locally in $L^{2}$, Hölder's inequality
yields
\begin{align*}
\|\varphi_{t}\|_{L^{\infty}} & \le C\|\varphi_{0}\|_{L^{\infty}}+C\|g\|_{L^{p}}(\int_{0}^{t}\|\varphi_{s}\|_{L^{\infty}}^{2}\,ds)^{1/2}+C\|h\|_{L^{\infty}}(\int_{0}^{t}\|\varphi_{s}\|_{L^{\infty}}^{2}\,ds)^{1/2}
\end{align*}
and thus
\begin{align*}
\|\varphi_{t}\|_{L^{\infty}}^{2} & \le C\|\varphi_{0}\|_{L^{\infty}}^{2}+C\|g\|_{L^{p}}^{2}\int_{0}^{t}\|\varphi_{s}\|_{L^{\infty}}^{2}\,ds+C\|h\|_{L^{\infty}}^{2}\int_{0}^{t}\|\varphi_{s}\|_{L^{\infty}}^{2}\,ds.
\end{align*}
Gronwall's inequality implies
\begin{align}
\|\varphi_{t}\|_{L^{\infty}} & \le C\|\varphi_{0}\|_{L^{\infty}}\exp[C(\|g\|_{L^{p}}^{2}+\|h\|_{L^{\infty}}^{2})].\label{eq:Linfty_estimate_PDE}
\end{align}
We continue with the $L_{x}^{\infty}$ estimate for $\nabla\varphi_{t}$.
Using again Young's inequality we get
\begin{align*}
 & \|\nabla\varphi_{t}\|_{L^{\infty}}\\
 & \le\|p_{t}*\nabla\varphi_{0}\|_{L^{\infty}}+\int_{0}^{t}\|\nabla p_{t-s}*(g\varphi_{s})\|_{L^{\infty}}\,ds+\int_{0}^{t}\|\nabla p_{t-s}*(h\varphi_{s})||_{L^{\infty}}\,ds\\
 & \le C\|\nabla\varphi_{0}\|_{L^{\infty}}+C\int_{0}^{t}(t-s)^{-(1+d/p)/2}\|g\varphi_{s}\|_{L^{p}}\,ds+C\int_{0}^{t}(t-s)^{-1/2}\|h\varphi_{s}\|_{L^{\infty}}\,ds\\
 & \le C\|\nabla\varphi_{0}\|_{L^{\infty}}+C\int_{0}^{t}(t-s)^{-(1+d/p)/2}\|g\|_{L^{p}}\|\varphi_{s}\|_{L^{\infty}}\,ds\\
 & +C\int_{0}^{t}(t-s)^{-1/2}\|h\|_{L^{\infty}}\|\varphi_{s}\|_{L^{\infty}}\,ds.
\end{align*}
Since $p>d$, $(t-s)^{-(1+d/p)/2}$ is locally integrable and we obtain,
with \eqref{eq:Linfty_estimate_PDE},
\begin{align*}
\|\nabla\varphi_{t}\|_{L^{\infty}} & \le C\|\nabla\varphi_{0}\|_{L^{\infty}}+C(\|g\|_{L^{p}}+\|h\|_{L^{\infty}})\|\varphi_{0}\|_{L^{\infty}}\exp[C(\|g\|_{L^{p}}^{2}+\|h\|_{L^{\infty}}^{2})].
\end{align*}
The proof is complete.
\end{proof}

\section{Appendix B: measurability\label{sec:Appendix}}

In the following, let $(E,\mathcal{E},\mu)$ be a $\sigma$-finite
measure space. For a function $f:E\rightarrow\R$ recall the definition
(given in the introduction) of measurability. Given a Banach space
$V$ and a function $f:E\rightarrow V$, we recall the following three
definitions of measurability of $f$:
\begin{itemize}
\item we say that $f$ is strongly measurable if it is the pointwise (everywhere)
limit of a sequence of $V$-valued simple measurable functions (i.e.
of the form $\sum_{i=1}^{N}v_{i}1_{A_{i}}$ for $A_{i}$ in $\mathcal{E}$
and $v_{i}$ in $V$);
\item we say that $f$ is weakly measurable if, for every $\varphi$ in
$V^{*}$, $x\mapsto\langle f(x),\varphi\rangle_{V,V^{*}}$ is measurable;
\item if $V=U^{*}$ is the dual space of a Banach space $U$, we say that
$f$ is weakly-{*} measurable if, for every $\varphi$ in $U$, $x\mapsto\langle f(x),\varphi\rangle_{V,U}$
is measurable;
\item we say that $f$ is Borel measurable if, for every open set $A$ in
$V$ (endowed with the strong topology), $f^{-1}(A)$ is in $\mathcal{E}$.
\end{itemize}
The following result is morally Pettis measurability theorem. The
present version is a consequence of \cite[Chapter I Propositions 1.9 and 1.10]{VTC87}.
\begin{prop}
\label{prop:strong_weak_meas}Let $V$ be a separable Banach space.
Then the notions of strong measurability, weak measurability and Borel
measurability coincide. They also coincide with the weak-{*} measurability
if moreover $V$ is reflexive (in particular if $V=\R$).
\end{prop}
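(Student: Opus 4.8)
The plan is to prove the cycle of implications strong $\Rightarrow$ Borel $\Rightarrow$ weak $\Rightarrow$ strong, so that the three notions coincide, and then to treat weak-* measurability separately using reflexivity. The first two implications are routine. If $f$ is strongly measurable it is the $\mu$-a.e.\ limit of simple functions $\sum_{i}v_{i}1_{A_{i}}$; each such function is Borel measurable, since the preimage of an open set is a finite union of sets in $\mathcal{E}$, and an a.e.\ limit of Borel measurable maps into the metric space $V$ is Borel measurable with respect to the completion $\bar{\mathcal{E}}^{\mu}$. This gives strong $\Rightarrow$ Borel. If $f$ is Borel measurable and $\varphi\in V^{*}$, then $\varphi$ is continuous, so $x\mapsto\langle f(x),\varphi\rangle_{V,V^{*}}$ is Borel and hence $\mu$-measurable, giving Borel $\Rightarrow$ weak.

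The substantive implication is weak $\Rightarrow$ strong, which is the Pettis measurability theorem and is where the real work lies. The key is to use separability to manufacture a countable \emph{norming} family: taking a countable dense set $\{v_{n}\}\subset V$ and, by Hahn--Banach, functionals $\varphi_{n}\in V^{*}$ with $\|\varphi_{n}\|=1$ and $\langle v_{n},\varphi_{n}\rangle_{V,V^{*}}=\|v_{n}\|$, a short density argument shows $\|v\|=\sup_{n}|\langle v,\varphi_{n}\rangle_{V,V^{*}}|$ for every $v\in V$. Therefore, for each fixed $w\in V$ the map $x\mapsto\|f(x)-w\|=\sup_{n}|\langle f(x)-w,\varphi_{n}\rangle_{V,V^{*}}|$ is $\mu$-measurable, being a countable supremum of $\mu$-measurable functions (this is exactly where weak measurability of $f$ enters). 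With these distance functions available, I would construct the approximating simple functions directly: for each $m\ge1$ set $A_{n}^{m}=\{x:\|f(x)-v_{n}\|<1/m\}\setminus\bigcup_{j<n}A_{j}^{m}$, which are measurable, disjoint, and cover $E$ by density of $\{v_{n}\}$; the countably-valued function $g_{m}=\sum_{n}v_{n}1_{A_{n}^{m}}$ then obeys $\|g_{m}(x)-f(x)\|<1/m$ for every $x$. Each $g_{m}$ is strongly measurable, being the everywhere limit of its finite truncations $\sum_{n\le N}v_{n}1_{A_{n}^{m}}$, and since $g_{m}\to f$ uniformly, $f$ is strongly measurable. The main obstacle is thus confined to setting up the norming family and verifying the countable-supremum representation of the norm; once that is in place the approximation is mechanical.

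For the weak-* statement I would note that this notion is defined only relative to a predual, i.e.\ a space $U$ with $V=U^{*}$, and that a reflexive $V$ is canonically the dual of $U=V^{*}$ through the isomorphism $V\cong(V^{*})^{*}$. Under this identification the weak-* topology on $V$, namely $\sigma(V,V^{*})$, is exactly the weak topology on $V$, and the functionals paired against in the definition of weak-* measurability are precisely the elements of $U=V^{*}$ appearing in weak measurability. Hence weak-* and weak measurability are literally the same requirement, so by the equivalences already established they coincide with strong and Borel measurability as well. The case $V=\R$ is the trivial instance $\R=\R^{*}$, for which all four notions reduce to ordinary $\mu$-measurability.
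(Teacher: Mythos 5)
Your proof is correct, and its overall architecture (a three-way equivalence plus a separate reflexivity argument for the weak-$*$ case) matches the paper's, but you take a genuinely more self-contained route on the one substantive implication. The paper simply cites the Pettis measurability theorem for the equivalence of strong and weak measurability, whereas you prove it: you build a countable norming family $(\varphi_{n})$ via Hahn--Banach so that $x\mapsto\|f(x)-w\|$ becomes a countable supremum of $\mu$-measurable functions, and then construct countably-valued uniform approximations $g_{m}$ by the usual first-hit decomposition over a dense sequence. This is exactly the standard proof of Pettis's theorem, so you buy self-containedness at the cost of length; the only step you leave implicit is the passage from ``$f$ is a uniform limit of countably-valued strongly measurable functions'' to ``$f$ is an a.e.\ limit of simple functions,'' which needs a routine diagonal extraction over an exhaustion of the $\sigma$-finite space (choose truncation levels $N_{m}$ so that the truncation of $g_{m}$ differs from $g_{m}$ on a set of measure at most $2^{-m}$ inside the $m$-th exhausting set, then apply Borel--Cantelli). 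Your treatment of strong $\Rightarrow$ Borel also differs slightly: you invoke the general fact that an a.e.\ pointwise limit of Borel measurable maps into a metric space is Borel measurable for the completed $\sigma$-algebra, while the paper reduces to the $\mu$-measurability of $x\mapsto\|f(x)-y\|$ using that balls generate the Borel $\sigma$-algebra of a separable metric space; both are standard and equivalent in difficulty. Your reflexivity argument for the weak-$*$ case is the intended one (the paper's proof does not even spell it out), and is fine provided one reads ``weak-$*$'' relative to the canonical predual $U=V^{*}$.
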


As mentioned in the introduction, in the definition of $L^{p}$ spaces
we only consider two cases: (1) $V=U^{*}$ is the dual space of a
separable Banach space, where $L^{0}(E;V)$ is the space of equivalent
classes of weakly-{*} measurable functions; (2) $V$ is a separable
Banach space, where $L^{0}(E;V)$ is the space of equivalent classes
of weakly (or strongly or Borel) measurable functions. In both cases,
for any function $f$ in $L^{0}(E;V)$, the function $x\mapsto\|f(x)\|_{V}$
is measurable: in the case (1) because $\|f(x)\|=\sup_{\varphi\in D}|\langle f(x),\varphi\rangle|$,
where $D$ is a countable sense set of $B_{1}^{U}$ (the unit centered
ball in $U$); in the case (2) as composition of the Borel map $f$
and the continuous map $\|\cdot\|_{V}$. Therefore, it makes sense
to define the spaces $L^{p}(E;V)$ for $1\le p\le\infty$.
\begin{prop}
\label{prop:Lp_two_var}Let $D$ be a domain of $\R^{n}$. For every
$1\le p\le\infty$, the space $L^{p}(E\times D,\mathcal{E\otimes\mathcal{B}}(D))$
is canonically embedded in $L^{p}(E;L^{p}(D))$ (whose functions are
weakly measurable for $1\le p<\infty$, weakly-{*} measurable for
$p=\infty$). This embedding is a surjective isomorphism.
\end{prop}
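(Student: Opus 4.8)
The plan is to realise the asserted embedding as the explicit \emph{slice map} $\iota$ that sends $f\in L^{p}(E\times D,\mathcal{E}\otimes\mathcal{B}(D))$ to the function $x\mapsto f(x,\cdot)$, and then to prove in turn that $\iota$ is well defined, that it is a linear isometry, and that it is surjective. The isometry part is the same for all $p$, but surjectivity must be handled differently for $p<\infty$ and for $p=\infty$, the latter being the real difficulty.

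First I would fix a genuinely $\mathcal{E}\otimes\mathcal{B}(D)$-measurable representative of $f$; its sections $f(x,\cdot)$ are then Borel on $D$ for every $x$. For $1\le p<\infty$, Tonelli's theorem gives
\[
\int_{E}\|f(x,\cdot)\|_{L^{p}(D)}^{p}\,d\mu(x)=\int_{E}\int_{D}|f(x,y)|^{p}\,dy\,d\mu(x)=\|f\|_{L^{p}(E\times D)}^{p},
\]
so $f(x,\cdot)\in L^{p}(D)$ for a.e.\ $x$ and $\iota$ is isometric; for $p=\infty$ the analogous identity $\|f\|_{L^{\infty}(E\times D)}=\esssup_{x}\|f(x,\cdot)\|_{L^{\infty}(D)}$ holds because the iterated and joint essential suprema coincide on the $\sigma$-finite product. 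Weak measurability of $\iota f$ (for $p<\infty$), resp.\ weak-$*$ measurability (for $p=\infty$), amounts to measurability of $x\mapsto\int_{D}f(x,y)\varphi(y)\,dy$ for every $\varphi$ in $L^{p'}(D)=(L^{p}(D))^{*}$, resp.\ for every $\varphi$ in the predual $L^{1}(D)$. This follows from Fubini's theorem: exhausting $E$ by sets $E_{n}$ of finite measure (by $\sigma$-finiteness), H\"older's inequality shows $(x,y)\mapsto f(x,y)\varphi(y)$ is integrable on each $E_{n}\times D$, so the partial integral is measurable on every $E_{n}$ and hence on $E$.

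For surjectivity when $1\le p<\infty$ I would argue by density. Being a linear isometry from a complete space, $\iota$ has closed range. For an elementary tensor $f(x,y)=1_{A}(x)v(y)$ with $\mu(A)<\infty$ and $v\in L^{p}(D)$ one has $\iota f=v\,1_{A}$, so the range contains every $L^{p}(D)$-valued simple function; since $L^{p}(D)$ is separable for $p<\infty$, Proposition \ref{prop:strong_weak_meas} identifies $L^{p}(E;L^{p}(D))$ with the space of strongly measurable $p$-integrable functions, in which simple functions are dense. A closed subspace containing a dense set is everything, so $\iota$ is onto.

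The main obstacle is $p=\infty$: here $L^{\infty}(D)$ is non-separable, weak-$*$ and strong measurability no longer coincide, and the simple-function density argument collapses. I would circumvent this by duality, using the already-established case $p=1$, which furnishes an isometric isomorphism $T\colon L^{1}(E\times D)\xrightarrow{\ \sim\ }L^{1}(E;L^{1}(D))$. Dualising and combining $L^{\infty}(E\times D)=(L^{1}(E\times D))^{*}$ with the Bochner duality $(L^{1}(E;L^{1}(D)))^{*}=L^{\infty}(E;L^{\infty}(D))$ in its weak-$*$ measurable form---valid precisely because $L^{1}(D)$ is separable with dual $L^{\infty}(D)$, which is exactly case (1) of the definition (see \cite{DieUhl1977})---produces an isometric isomorphism $\Phi\colon L^{\infty}(E\times D)\to L^{\infty}(E;L^{\infty}(D))$. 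It remains to check that $\Phi$ is the canonical slice map $\iota$, and this is the one genuine verification: pairing $\Phi f$ with an arbitrary $H=TG\in L^{1}(E;L^{1}(D))$ gives $\langle\Phi f,H\rangle=\int_{E\times D}fG$ by definition of the adjoint, while $\langle\iota f,H\rangle=\int_{E}\langle f(x,\cdot),G(x,\cdot)\rangle_{L^{\infty}(D),L^{1}(D)}\,dx=\int_{E\times D}fG$ as well, since $H(x)=G(x,\cdot)$. The two functionals therefore agree for all $H$, so $\Phi=\iota$ and $\iota$ is surjective also at $p=\infty$.
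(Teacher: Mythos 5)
Your proposal is correct, and for the only genuinely delicate case, $p=\infty$, it takes a different route from the paper's. For $1\le p<\infty$ the two arguments are essentially the same in substance: the paper also reduces to simple functions via Proposition \ref{prop:strong_weak_meas}, but instead of your closed-range-plus-density packaging it lifts representatives of the values $F_{n,k}$ to jointly measurable functions $G_{n}$ on $E\times D$, notes that $(G_{n})_{n}$ is Cauchy in $L^{p}(E\times D)$ because the slice map is isometric on simple functions, and takes the limit as the desired representative. For $p=\infty$ the paper avoids duality entirely: it exhausts $E\times D$ by sets $E_{n}\times(B_{R}\cap D)$ of finite measure, observes that a weak-{*} measurable element of $L^{\infty}(E;L^{\infty}(D))$ restricts to a weakly measurable element of $L^{2}(E_{n};L^{2}(B_{R}\cap D))$, applies the already-proved case $p=2$ to obtain a jointly measurable representative in $L_{loc}^{2}(E\times D)$, and checks that this representative is essentially bounded. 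Your dualisation of the $p=1$ case is slicker but imports the vector-valued duality $(L^{1}(E;L^{1}(D)))^{*}=L_{w^{*}}^{\infty}(E;L^{\infty}(D))$ as a black box; that theorem is true here because $L^{1}(D)$ is separable, but it is not contained in Proposition \ref{prop:strong_weak_meas} and deserves an explicit citation (it is not the Pettis theorem part of \cite{DieUhl1977}). It is worth noting that you only need its easy half: given $F\in L^{\infty}(E;L^{\infty}(D))$, the functional $H\mapsto\int_{E}\langle F(x),H(x)\rangle\,dx$ is bounded on $L^{1}(E;L^{1}(D))$, its pull-back under $T$ is represented by some $f\in L^{\infty}(E\times D)$ via the scalar duality $(L^{1}(E\times D))^{*}=L^{\infty}(E\times D)$, and testing against $H=1_{A}\varphi$ with $\varphi$ in a countable dense subset of $L^{1}(D)$ gives $\iota f=F$ a.e. Phrased this way your $p=\infty$ step is self-contained and comparable in length to the paper's localisation to $L^{2}$; as written, it trades the paper's elementary reduction for a stronger external input.
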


\begin{proof}
The embedding result is easy to show using Fubini's theorem, we prove
only the surjectivity. We start with the case $p<\infty$. To prove
this, let $F$ be an element (more precisely, a representative of
an element) in $L^{p}(E;L^{p}(D))$. By Proposition \ref{prop:strong_weak_meas},
$F$ is strongly measurable, i.e.~there exists a sequence $(F_{n})_{n}$
of simple functions in $L^{p}(E;L^{p}(D))$ which converges to $F$
in $L^{p}(D)$ for every $x$ and, without loss of generality, in
$L^{p}(E;L^{p}(D))$. We can write $F_{n}$ as
\begin{align*}
 & F_{n}(x)=\sum_{k=1}^{N(n)}F_{n,k}1_{A_{n,k}}(x)
\end{align*}
for some measurable sets $A_{n,k}$ and some elements $F_{n,k}$ in
$L^{p}(D)$. Now we define, for each $n$, the map $G_{n}:E_{x}\times E_{y}\rightarrow V$
by
\begin{align*}
 & G_{n}(x,y)=\sum_{k=1}^{N(n)}G_{n,k}(y)1_{A_{n,k}}(x),
\end{align*}
where $G_{n,k}$ is a representative of $F_{n,k}$. The function $G_{n}$
is measurable in $(x,y)$; since $\|G_{n}-G_{m}\|_{L^{p}(E\times D)}=\|F_{n}-F_{m}\|_{L^{p}(E;L^{p}(D))}$,
the sequence $(G_{n})_{n}$ is Cauchy in $L^{p}(E\times D)$, therefore
it converges to some $G$ in $L^{p}(E\times D)$. In particular $x\mapsto[y\mapsto G_{n}(x,y)]$
(where $[y\mapsto G_{n}(x,y)]$ is the equivalence class of $y\mapsto G_{n}(x,y)$)
converges to $x\mapsto[y\mapsto G(x,y)]$ in $L^{p}(E;L^{p}(D))$.
It follows that $x\mapsto[y\mapsto G(x,y)]$ coincides with $F$ $\mu$-a.e..
Hence $G$ is the desired representative in $L^{p}(E\times D)$ of
$F$. This concludes the proof in the case $p<\infty$.

The case $p=\infty$ can be reduced to the case $p<\infty$. Indeed,
call $(E_{n})_{n}$ an increasing sequence of sets with finite measure
and with $E_{n}\nearrow E$; then any function $f$ in $L^{\infty}(E;L^{\infty}(D))$,
restricted to $L^{\infty}(E_{n};L^{\infty}(B_{R}\cap D))$, is also
a weakly measurable function in $L^{2}(E_{n};L^{2}(B_{R}\cap D))$.
Hence, it has a representative in $L^{2}(E_{n}\times(B_{R}\cap D))$
and thus in $L_{loc}^{2}(E\times D)$, by arbitrariness of $R$ and
$n$, and this representative is essentially bounded.
\end{proof}
The following result is in Valadier \cite{Val1990}, Theorem 2 (see
also Theorem A.4).
\begin{thm}
\label{thm:dual_Lp}Assume that $\mu$ is finite and $\mathcal{E}$
is $\mu$-complete. Let $S$ be a metric $\sigma$-compact locally
compact space and, for any $R>0$, denote by $L_{R}^{\infty}(E;\mathcal{M}_{+}(S))$
the subset of $L^{\infty}(E;\mathcal{M}(S))$ of nonnegative measure-valued
functions $g$ with $\|g\|_{L^{\infty}(E;\mathcal{M}(S))}\le R$.
Then \textup{$L_{R}^{\infty}(E;\mathcal{M}_{+}(S))$} is (embedded
isomorphically in) a bounded sequentially weakly-{*} closed subset
of the dual space of $L^{1}(E;C_{0}(S))$. In particular, every sequence
in $L_{R}^{\infty}(E;\mathcal{M}_{+}(S))$ admits a subsequence converging
weakly-{*} to an element of $L_{R}^{\infty}(E;\mathcal{M}_{+}(S))$.
\end{thm}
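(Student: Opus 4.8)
The plan is to identify $L^{\infty}(E;\mathcal{M}(S))$ with the dual of $L^{1}(E;C_{0}(S))$, to recognise $L_{R}^{\infty}(E;\mathcal{M}_{+}(S))$ as a bounded, weak-* closed subset of that dual, and then to invoke a separability argument together with Banach--Alaoglu to deduce sequential weak-* compactness. First I would record the two structural facts underlying everything. Since $S$ is metric, $\sigma$-compact and locally compact, the space $C_{0}(S)$ is separable, and the Riesz representation theorem identifies its dual with $\mathcal{M}(S)$ under the pairing $\langle\mu,\phi\rangle=\int_{S}\phi\,d\mu$. As $E$ is $\sigma$-finite and $C_{0}(S)$ separable, the predual $L^{1}(E;C_{0}(S))$ is separable as well.

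The central step is the duality $(L^{1}(E;C_{0}(S)))^{*}=L_{w*}^{\infty}(E;\mathcal{M}(S))$, where the right-hand side is the space of classes of weak-* measurable, essentially bounded $\mathcal{M}(S)$-valued fields, paired with $f\in L^{1}(E;C_{0}(S))$ via $\langle g,f\rangle=\int_{E}\langle g(x),f(x)\rangle\,d\mu(x)$; this is exactly the cited result \cite{Val1990}. By the convention adopted in the Notation section (case (1): weak-* measurable representatives), this right-hand side coincides with $L^{\infty}(E;\mathcal{M}(S))$, which gives the claimed isomorphic embedding of $L_{R}^{\infty}(E;\mathcal{M}_{+}(S))$ into the dual, with dual norm $\le R$. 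I expect this identification to be the hard part: it relies essentially on the weak-* measurable formulation of the vector-valued $L^{\infty}$ space -- strong measurability would require $\mathcal{M}(S)$ to have the Radon--Nikodym property, which fails in general -- and its proof proceeds by assigning to a bounded functional $\Phi$, for each fixed $\phi$ and measurable $A$, the $\mu$-absolutely continuous measure $A\mapsto\Phi(1_{A}\phi)$, and then assembling the resulting densities into a single weak-* measurable field by a lifting or measurable-selection argument.

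Granting the duality, I would verify weak-* closedness of $L_{R}^{\infty}(E;\mathcal{M}_{+}(S))$. If $g_{n}\to g$ weak-* with each $g_{n}$ in this set, then weak-* lower semicontinuity of the norm yields $\|g\|_{L^{\infty}(E;\mathcal{M}(S))}\le\liminf_{n}\|g_{n}\|\le R$. For nonnegativity I would fix a countable dense family $(\phi_{k})_{k}$ in the nonnegative cone of $C_{0}(S)$; testing against $f=1_{A}\phi_{k}$ for each $k$ and each finite-measure $A\subseteq E$ gives $\int_{A}\langle g(x),\phi_{k}\rangle\,d\mu=\lim_{n}\int_{A}\langle g_{n}(x),\phi_{k}\rangle\,d\mu\ge0$, so $\langle g(x),\phi_{k}\rangle\ge0$ for a.e.\ $x$. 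Discarding the union of these countably many null sets and using density of $(\phi_{k})_{k}$, I conclude that $g(x)\in\mathcal{M}_{+}(S)$ for a.e.\ $x$, hence $g\in L_{R}^{\infty}(E;\mathcal{M}_{+}(S))$.

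Finally, the compactness conclusion follows because the predual $L^{1}(E;C_{0}(S))$ is separable: by the sequential Banach--Alaoglu theorem every sequence in the radius-$R$ ball of the dual has a weak-* convergent subsequence, and its limit lies in $L_{R}^{\infty}(E;\mathcal{M}_{+}(S))$ by the closedness just established. This yields the stated sequential weak-* compactness and completes the plan.
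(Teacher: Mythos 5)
First, a point of reference: the paper does not prove this statement at all --- it is quoted from \cite{Val1990} (Theorem 2 there), so there is no internal proof to compare against. Your sketch reconstructs the standard argument behind that citation: identify $L^{\infty}(E;\mathcal{M}(S))$ with the dual of $L^{1}(E;C_{0}(S))$ via the weak-{*} measurable duality theorem (correctly flagged as the hard step, and correctly attributed to a lifting argument rather than to strong measurability, since $\mathcal{M}(S)$ lacks the Radon--Nikodym property), then verify that the nonnegative cone intersected with the radius-$R$ ball is weak-{*} sequentially closed, and conclude by sequential Banach--Alaoglu. The duality and closedness steps are argued correctly.

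The one genuine gap is the assertion that ``as $E$ is $\sigma$-finite and $C_{0}(S)$ separable, the predual $L^{1}(E;C_{0}(S))$ is separable as well.'' This is false for a general $\sigma$-finite measure space: separability of $L^{1}(E,\mathcal{E},\mu)$ requires $\mathcal{E}$ to be countably generated modulo $\mu$-null sets (already $L^{1}$ of an uncountable product probability space is nonseparable), and the theorem is stated --- and applied in the paper with $E=\Omega$ or $[0,T]\times\Omega$ --- without any such hypothesis on $\mathcal{E}$. So sequential Banach--Alaoglu does not apply as you invoke it. The standard repair: given the sequence $(g_{n})$, let $\mathcal{E}_{0}\subseteq\mathcal{E}$ be the $\sigma$-algebra generated by the countably many real-valued functions $x\mapsto\langle g_{n}(x),\phi_{k}\rangle$ (with $(\phi_{k})_{k}$ dense in $C_{0}(S)$) together with a $\sigma$-finite exhaustion of $E$. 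Then $L^{1}(E,\mathcal{E}_{0};C_{0}(S))$ \emph{is} separable, each $g_{n}$ is weak-{*} $\mathcal{E}_{0}$-measurable, and a subsequence converges weak-{*} in the dual of this smaller space to a limit $g$ represented by an $\mathcal{E}_{0}$-measurable field in $L_{R}^{\infty}(E;\mathcal{M}_{+}(S))$. Since $g_{n}$ and $g$ are $\mathcal{E}_{0}$-measurable one has $\int\langle g_{n},f\rangle\,d\mu=\int\langle g_{n},\E[f\mid\mathcal{E}_{0}]\rangle\,d\mu$ for every $f\in L^{1}(E,\mathcal{E};C_{0}(S))$ (first for simple $f$, then by density), so the convergence persists against the full predual. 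With this insertion your argument is complete.
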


We close with a remark on operations on measurable functions:
\begin{rem}
\label{rmk:measurability_omega} (i) Assume that $V$ is the dual
space of a separable space $U$. Let $f:E\rightarrow V$ be a weakly-{*}
measurable map and let $\varphi:E\rightarrow U$ be a (weakly or equivalently
strongly) measurable map. Then the map $x\mapsto\langle f(x),\varphi(x)\rangle_{V,U}$
is measurable. Indeed, if $\varphi_{k}$ are simple measurable functions
approximating everywhere $\varphi$, then $x\mapsto\langle f(x),\varphi_{k}(x)\rangle_{V,U}$
are measurable functions approximating everywhere $x\mapsto\langle f(x),\varphi(x)\rangle_{V,U}$.

In particular, take a weakly-{*} progressively measurable function
$f:[0,T]\times\Omega\rightarrow L_{x,\xi}^{\infty}$ and a $\mathcal{P}\otimes\mathcal{B}(\R^{d})\otimes\mathcal{B}(\R)$-measurable
integrable function $\varphi:[0,T]\times\Omega\times\R^{d}\times\R\rightarrow\R$,
so that $\varphi:[0,T]\times\Omega\rightarrow L_{x,\xi}^{1}$ is a
progressively measurable function. Then $(t,\omega)\mapsto\langle f(t,\omega),\varphi(t,\omega)\rangle_{x,\xi}$
is a progressively measurable function.

(ii) An analogous property holds for bounded kinetic measures $m$.
In this case one can consider a more general class of test functions.
Let $\varphi:[0,T]\times\Omega\times\R^{d}\times\R\rightarrow\R$
be a measurable function (not an equivalence class) such that: 1)
for every $(x,\xi)$, $(t,\omega)\mapsto\varphi(t,\omega,x,\xi)$
is progressively measurable; 2) for a.e. $\omega$, the zero set being
independent of $(t,x,\xi)$, and for every $(x,\xi)$, $t\mapsto\varphi(t,\omega,x,\xi)$
is càdlàg or càglàd; 3) for a.e. $\omega$, the zero set being independent
of $(t,x,\xi)$, and for every $t$, $(x,\xi)\mapsto\varphi(t,\omega,x,\xi)$
is continuous; 4) for a.e. $\omega$, the map $(t,x,\xi)\mapsto\varphi(t,\omega,x,\xi)$
is bounded. Then, for every fixed representative of $m^{\omega}$,
the map 
\[
(t,\omega)\mapsto\int_{[0,t]\times\R^{d}\times\R}\varphi^{\omega}(r,x,\xi)m^{\omega}(r,x,\xi)\,dxd\xi dr
\]
is progressively measurable and has a.e.~càdlàg paths. The same result,
replacing càdlàg with càglàd in the thesis, holds for $\int_{[0,t)\times\R^{d}\times\R}\varphi^{\omega}m^{\omega}\,dxd\xi dr$.

We prove this fact first for $t\mapsto\varphi(t,\omega,x,\xi)$ càdlàg.
We take a regular function $\psi_{n}$ on $\R^{d}\times\R$ with $0\le\psi_{n}\le1$,
$\psi_{n}=1$ on~$B_{n}$ and with support on $B_{2n}$ and we define
$\varphi_{n}=n\psi_{n}\varphi*_{t}1_{[0,1/n]}$. Note that $\varphi_{n}$
is a.s.~in $C_{0}([0,T]\times\R^{d}\times\R)$: indeed $\varphi_{n}^{\omega}$
is Lipschitz continuous in $t$ uniformly in $(x,\xi)$ (by boundedness
of $\varphi^{\omega}$) and continuous in $(x,\xi)$ at $t$ fixed.
Moreover, for every $t$, $\omega\mapsto\varphi_{n}^{\omega}|_{[0,t]}\in C_{0}([0,t]\times\R^{d}\times\R)$
is weakly, hence strongly $\mathcal{F}_{t+1/n}$-measurable: indeed,
for every finite signed measure $\mu$ on $[0,t]\times\R^{d}\times\R$,
$\omega\mapsto\int_{[0,t]\times\R^{d}\times\R}\varphi_{n}^{\omega}\,d\mu=n\int_{[0,t+1/n]}\int_{[(s-1/n)\vee0,s\wedge t]}\varphi^{\omega}(s,x,\xi)\,d\mu(r,x,\xi)\,ds$
is $\mathcal{F}_{t+1/n}$-measurable by Fubini's theorem. The first
part of this remark then gives that $\int_{[0,t]\times\R^{d}\times\R}\varphi_{n}^{\omega}m^{\omega}\,dxd\xi dr$
is $\mathcal{F}_{t+1/n}$-measurable.

Now, for a.e.~$\omega,$ $\varphi_{n}^{\omega}$ converges everywhere
to $\varphi^{\omega}$, by the càdlàg and continuity properties of
$\varphi$. Therefore, for every $t$ fixed, $\int_{[0,t]\times\R^{d}\times\R}\varphi^{\omega}m^{\omega}\,dxd\xi dr$
is the a.s.~limit of $\int_{[0,t]\times\R^{d}\times\R}\varphi_{n}^{\omega}m^{\omega}\,dxd\xi dr$.
Hence $\int_{[0,t]\times\R^{d}\times\R}\varphi^{\omega}m^{\omega}\,dxd\xi dr$
is $\mathcal{F}_{t+1/n}$-measurable for every $n$ (recall that $\mathcal{F}_{0}$
is complete) and thus $\mathcal{F}_{t}$-measurable. Moreover, for
any fixed representative of $m^{\omega}$, for a.e.~$\omega$, the
map $t\mapsto\int_{[0,t]\times\R^{d}\times\R}\varphi^{\omega}m^{\omega}\,dxd\xi dr$
is càdlàg. Therefore, $(t,\omega)\mapsto\int_{[0,t]\times\R^{d}\times\R}\varphi^{\omega}m^{\omega}\,dxd\xi dr$
has the desired properties.

In the case $t\mapsto\varphi(t,\omega,x,\xi)$ càglàd, the same proof
applies but taking $\varphi_{n}=\psi_{n}\varphi*_{t}1_{[-1/n,0]}$
(with $\varphi_{t}$ extended as $\varphi_{0}$ for $-1/n\le t<0$).
\end{rem}

\section{Funding}

Benjamin Gess acknowledges financial support by the the Max Planck
Society through the Max Planck Research Group \textquotedblleft Stochastic
partial differential equations\textquotedblright{} and by the DFG
through the CRC \textquotedblleft Taming uncertainty and profiting
from randomness and low regularity in analysis, stochastics and their
applications\textquotedblright . Mario Maurelli has received support
via the research grant from the European Research Council under the
European Union\textquoteright s Seventh Framework Program (FP7/2007-2013)/ERC
grant agreement nr. 258237. Part of this research was done while Mario
Maurelli was a research assistant at Weierstrass Institute for Applied
Analysis and Stochastic, Berlin and at Technische Universität Berlin,
Germany.

\bibliographystyle{abbrv}
\bibliography{my_bib}

\end{document}